\definecolor{amaranth}{rgb}{0.9, 0.17, 0.31}
\definecolor{shadecolor}{rgb}{1,0.9,0.7}
\newcommand\restr[2]{{
  \left.\kern-\nulldelimiterspace 
  #1 
  \vphantom{\big|} 
  \right|_{#2} 
  }}
\let\oldtocsection=\tocsection
\let\oldtocsubsection=\tocsubsection
\let\oldtocsubsubsection=\tocsubsubsection
\renewcommand{\tocsection}[2]{\hspace{0em}\oldtocsection{#1}{#2}}
\renewcommand{\tocsubsection}[2]{\hspace{1em}\oldtocsubsection{#1}{#2}}
\renewcommand{\tocsubsubsection}[2]{\hspace{2em}\oldtocsubsubsection{#1}{#2}}
\newtheorem{theorem}{Theorem}[section]
\newtheorem{lemma}[theorem]{Lemma}
\newtheorem{proposition}[theorem]{Proposition}
\newtheorem{corollary}[theorem]{Corollary}
\newtheoremstyle{defstyle}
  {.6em} 
  {.1em} 
  {} 
  {} 
  {\bfseries} 
  {.} 
  {.5em} 
  {} 
\theoremstyle{defstyle} \newtheorem{definition}[theorem]{Definition}
\newtheorem{example}[theorem]{Example}
\theoremstyle{remark}
\newtheorem{remark}[theorem]{Remark}
\numberwithin{equation}{section}
\numberwithin{figure}{section}
\newcommand{\CC} {\mathbb{C}}
\newcommand{\FF} {\mathbb{F}}
\newcommand{\PP} {\mathbb{P}}
\renewcommand{\AA} {\mathbb{A}}
\newcommand{\LL} {\mathbb{L}}
\newcommand{\cO} {\mathcal{O}}
\newcommand {\shN}  {\mathcal{N}}
\newcommand {\shR}  {\mathcal{R}}
\newcommand {\shP}  {\mathcal{P}}
\newcommand {\shX}  {\mathcal{X}}
\newcommand {\Aut}  {\operatorname{Aut}}
\newcommand {\ev}  {\operatorname{ev}}
\newcommand {\rel} {\operatorname{rel}}
\newcommand{\C} {\mathrm{C}}
\newcommand {\prim} {\operatorname{prim}}
\newcommand {\Spec} {\operatorname{Spec}}
\newcommand {\virt} {\mathrm{virt}}
\newcommand {\Z} {\mathfrak Z}
\newcommand {\V} {\mathscr{V}}
\newcommand {\bP} {\mathbf{P}}
\def\ev{\mathrm {ev}}
\def\virt{\mathrm{vir}}
\def\PP{\mathbb{P}}
\def\cO{\mathcal{O}}
\def\oM{\overline{\mathcal{M}}}
\def\C{{\mathcal{C}}}
\def\Z{\mathbb{Z}}
\def\C{\mathbb{C}}
\def\Q{\mathbb{Q}}
\def\Aut{{\rm Aut}}
\def\E{\mathrm{E}}
\def\n{\mathrm{n}}
\def\V{\mathrm{V}}
\def\H{\mathrm{H}}
\def\g{\mathrm{g}}
\def\G{\mathsf{G}}
\def\ooMM{{\mathfrak{M}}}
\def\v{\mathsf{v}}
\def\mydate{\ifcase\month \or January\or February\or March\or
April\or May\or June\or July\or August\or September\or October\or 
November\or December\fi \space\number\day,\space\number\year}
\newtheoremstyle{cited}%
  {3pt}
  {3pt}
  {\itshape}
  {}
  {\bfseries}
  {.}
  {.3em}
  {\thmname{#1} \thmnumber{#2}\thmnote{\normalfont#3}}
\theoremstyle{cited}
\newtheorem{citedthm}{Theorem}
\theoremstyle{cited}
  \def\title@font{\Large\bfseries}
  \let\ltx@maketitle\@maketitle
  \def\@maketitle{\bgroup%
    \let\ltx@title\@title%
    \ltx@maketitle%
  \egroup}
\begin{document}

\title[Gromov--Witten Theory of Complete Intersections]{\resizebox{\textwidth}{!}{Gromov--Witten Theory of Complete Intersections} via Nodal Invariants}

\author[H.\,Arg\"uz]{H\"ulya Arg\"uz}
\address{University of Georgia, Department of Mathematics, Athens, GA 30605}
\email{Hulya.Arguz@uga.edu}

\author[P.\,Bousseau]{Pierrick Bousseau}
\address{University of Georgia, Department of Mathematics, Athens, GA 30605}
\email{Pierrick.Bousseau@uga.edu}

\author[R.\,Pandharipande]{Rahul Pandharipande}
\address{Departement Mathematik, ETH Z\"urich \\ 8092 Z\"urich\\ Switzerland}
\email{rahul@math.ethz.ch}

\author[D.\,Zvonkine]{Dimitri Zvonkine}
\address{Universit\'e de Versailles Saint-Quentin-en-Yvelines, Versailles \\78000, France}
\email{dimitri.zvonkine@uvsq.fr  }

\date{December 2022}

\begin{abstract}
We provide an inductive algorithm computing Gromov--Witten invariants in all genera with arbitrary insertions of all smooth complete intersections in projective space. We also prove that all Gromov--Witten classes of all smooth complete intersections in projective space belong to the tautological ring of the moduli space of stable curves. The main idea is to show that invariants with insertions of primitive cohomology classes are controlled by their monodromy and by invariants defined without primitive insertions but with imposed nodes in the domain curve. To compute these nodal Gromov--Witten invariants, we introduce the new notion of nodal relative Gromov--Witten invariants. We  then prove a nodal degeneration formula and a relative splitting formula. These results for nodal relative Gromov--Witten theory are stated in complete generality and are of independent interest.

\end{abstract}

\maketitle

\tableofcontents
\setcounter{section}{-1}
\section{Introduction}

\subsection{Overview}
Let $X$ be a smooth projective variety over $\CC$. The
\emph{Gromov--Witten invariants} of $X$ are rational numbers defined by intersection theory on the moduli spaces of stable maps to $X$. For all $$g , n \in \Z_{\geq 0}\, , \ \ \beta \in H_2(X,\Z)\, , \ \  
\alpha_1, \dots, \alpha_n \in H^\star(X)\, ,\ \ \text{and}  \ \ k_1, \dots, k_n \in \Z_{\geq 0}\, ,$$ 
there is an associated
Gromov--Witten invariant
\begin{equation} \label{eq_gw}
\left\langle \prod_{i=1}^n \tau_{k_i}(\alpha_i) \right\rangle_{g,n,\beta}^X
:= \int_{[\oM_{g,n,\beta}(X)]^\virt} \prod_{i=1}^n \psi_i^{k_i} \ev_i^{*}(\alpha_i) \in \Q\,,
\end{equation}
where the integration is over the virtual fundamental class of the moduli space of genus $g$, $n$-pointed  
stable maps to $X$ of class $\beta$, $\ev_i$ is the evaluation morphism at the marked point $i$, and 
$\psi_i$ is the first Chern class of the cotangent line bundle at the marked point $i$. We refer the reader
to \cite{CK, FuP, KM} for an introduction to stable maps and Gromov-Witten theory.

If $2g-2+n>0$, \emph{Gromov--Witten classes} of $X$ are 
defined in the cohomology of the moduli space $\oM_{g,n}$ of genus $g$, $n$-pointed stable curves: 
\begin{equation} \label{eq_gw_class}
\left[\prod_{i=1}^n \tau_{k_i}(\alpha_i)\right]^X_{g,n,\beta} 
:= \pi_{*} \left( \prod_{i=1}^n \psi_i^{k_i} \ev_i^{*}(\alpha_i) 
\cap [\oM_{g,n,\beta}(X)]^\virt \right) \in H^{\star}(\oM_{g,n}) \, ,\end{equation}
where the push-forward is along the forgetful morphism 
$$\pi \colon \oM_{g,n,\beta}(X) \rightarrow \oM_{g,n}\, .$$
Taken all together, the Gromov-Witten classes define the \emph{Cohomological Field Theory} associated to $X$.

Though the Gromov--Witten invariants of a variety $X$ are of great interest, providing an effective algorithm to compute them is generally challenging. So far, such algorithms have been
obtained only in a handful of cases. For instance, the Gromov--Witten invariants of a point are determined by Witten's conjecture \cite{W}, proved by Kontsevich \cite{MR1171758}. 
The Gromov--Witten invariants of projective spaces, or more generally of homogeneous varieties, can be computed using the localization formula \cite{GP} and the calculation of Hodge integrals on the moduli spaces of curves \cite{FP}. Gromov--Witten invariants of curves have been computed by Okounkov--Pandharipande using degeneration techniques, monodromy constraints, and Hurwitz theory 
\cite{OP2, OP1, OP}. An algorithm{\footnote{See \cite{CGL,guo2018structure} for more recent progress related to the holomorphic anomaly equation.}} for the Gromov--Witten invariants
of the quintic 3-fold hypersurface  in $\PP^4$ has been given by Maulik--Pandharipande, based on degeneration techniques and a systematic study of 
universal relations in relative Gromov--Witten theory \cite{MP}. The
degeneration scheme was used to prove the GW/DT correspondence for the quintic 3-fold in \cite{PP}.

The main result of this paper is an algorithm computing all the Gromov--Witten invariants of all smooth complete intersections in projective space.
Moreover, this algorithm can be lifted to the level of Gromov--Witten classes, and
we prove, as a corollary, that all the Gromov--Witten classes of all complete intersections in projective space are elements of the tautological ring of 
$\oM_{g,n}$.

The rest of the introduction is organized as follows. In 
\S \ref{sec_intro_primitive}, we review the main obstruction to extending previously known techniques to the case of general complete intersections: the existence of primitive cohomology classes, which are not obtained by restriction of cohomology classes of the ambient projective space. In 
\S \ref{section_intro_monodromy}, we describe a new idea to overcome this obstacle: trading primitive cohomology classes for nodes in the domain curves. 
To combine this idea with existing degeneration techniques, we develop new foundational results 
in \emph{nodal Gromov--Witten theory}: we present in \S \ref{sec_intro_nodal_relative}\,-\,\ref{sec_intro_nodal_relative_splitting} a definition of nodal relative Gromov--Witten invariants, a degeneration formula, and a splitting formula. We summarize in \S \ref{sec_intro_algorithm} how these new tools are used to address the original question of computing Gromov--Witten invariants of complete intersections. Finally, we explain in \S \ref{sec_intro_tautological},
as an the application of the proof, that the Gromov--Witten classes of complete intersections are tautological.

\subsection{Complete intersections and primitive cohomology}
\label{sec_intro_primitive}

Let $X \subset \PP^{m+r}$ be a complex $m$-dimensional smooth complete intersection of $r$ hypersurfaces of degrees $(d_1,\dots,d_r)$ in the complex projective space $\PP^{m+r}$. 
According to the Lefschetz hyperplane theorem, the restriction map in cohomology 
\[ H^i(\PP^{m+r}) \rightarrow H^i(X)\] 
is an isomorphism for $i \neq m$ and $0 \leq i \leq 2m$, and we have a decomposition 
\[ H^m(X)=H^m(\PP^{m+r}) \oplus H^m(X)_{\prim} \]
where $H^m(X)_{\prim}$ is the primitive cohomology of $X$.
A class $\alpha \in H^{*}(X)$ is \emph{simple} if $\alpha$ lies in the image of 
$H^{*}(\PP^{m+r})$ and \emph{primitive} if $\alpha \in H^m(X)_{\prim}$.
Following \cite[\S 0.5.2]{MP}
we say that a Gromov--Witten invariant of $X$
\[ \left\langle \prod_{i=1}^n \tau_{k_i}(\alpha_i) \right\rangle_{g,n,\beta}^X \]
as in \eqref{eq_gw} is \emph{simple} if all the cohomology insertions 
$\alpha_i$ are simple.

A variety of methods have so far been used to study simple Gromov--Witten invariants of $X$. For instance, genus $0$
simple Gromov--Witten invariants can be expressed as twisted Gromov--Witten invariants of 
$\PP^{m+r}$ and then computed by torus localization \cite{givental, kontsevich1995enumeration, LianLiuYau, Z0CI}. This approach has recently been extended to study also higher genus{\footnote{See also \cite{popa, Z1H} for genus $1$.}}
Gromov--Witten invariants \cite{chang2016effective,fan2019quantum,  guo2018structure}.
Another powerful approach is to consider a degeneration of $X$ to a singular subvariety of $\PP^{m+r}$. As simple cohomology classes on $X$ extend to the total space of the degeneration, we can apply the degeneration formula  
\cite{JunLi}.

Unfortunately, these tools cannot be applied to study Gromov--Witten invariants
with primitive insertions. Indeed, by definition, primitive classes do not come from the ambient projective space $\PP^{m+r}$, and so it is unclear how to directly reduce Gromov--Witten invariants of $X$ with primitive insertions to some theory of $\PP^{m+r}$\footnote{As described in \cite[Remark 1.2]{fan2021quantum}, the approach of \cite{fan2019quantum} to quantum Lefschetz by localization on a master space can be applied to primitive insertions if a degree bound is satisfied, but not in general.}. 
We cannot directly use the degeneration formula either: if we degenerate $X$ to a singular subvariety of $\PP^{m+r}$, then primitive cohomology classes often do not extend to the total space of the degeneration as the monodromy action around the special fiber typically acts non-trivially on them. 

Due to the absence of tools, studying Gromov--Witten invariants of 
complete intersections of dimension strictly larger than 1 with primitive insertions has been challenging: the only results so far have been for genus $0$ invariants.
Genus 0 results
for general complete intersections \cite{MR1664668, MR1736987,hu2015computing}
restricted to either $2$  or $4$ primitive insertions
have been
obtained by studying the monodromy constraints and the WDVV equation.
Another approach by
\cite{Gathmann, Manolache} also
computes genus 0 Gromov-Witten
invariants with a restricted number of primitive insertions.
Specific examples such as cubic hypersurfaces and 
complete intersections of two quadrics
have been studied in more detail \cite{hu2015computing,hu2021computing}.
Primitive insertions in genus 0 also play a role in
the Tevelev degree calculations of complete intersections \cite{BuchP,LianP}.

The poor understanding of primitive insertions has also limited our understanding
of Gromov--Witten invariants with simple insertions. 
To apply the degeneration formula,
we consider a degeneration of $X$ into a union of two smooth varieties glued transversally along a smooth divisor $D$. 
Following~\cite{MP}, we select one of the hypersurface equations $f$ defining~$X$ and a product of polynomials $g_1g_2$ of positive degrees such that 
$$\deg(f)= \deg (g_1) + \deg (g_2).$$ 
The family of varieties obtained
by replacing $f$ with $tf + g_1g_2$ is degeneration of $X$ over $t\in \AA^1$ into a union $X_1 \cup_D X_2$ of two complete intersections. However, the total space of this family is, in general, not smooth. To 
desingularize the total space, we blow-up along $X_2$. We then obtain a degeneration with smooth total space, and with general fiber $X$ and special fiber $X_1 \cup_D \widetilde{X}_2$, 
\begin{equation*}
X \rightsquigarrow 
X_1 \cup_D \widetilde{X}_2\, ,
\end{equation*}
where $X_1$ and $D$ are complete intersections and $\widetilde{X}_2$ is a blow-up of a complete intersection along a complete intersection. 

In the degeneration formula, the gluing condition for curves in $X_1$ and $\widetilde{X}_2$ along $D$ involves the insertion of the cohomology class 
\begin{equation} \label{eq:diagonal}
    [\Delta]= \sum_j \delta_j \otimes \delta_j^{\vee} \, \in H^\star(D\times D)\,
\end{equation}
Poincaré dual to the diagonal 
$\Delta\subset D \times D$. In the K\"unneth decomposition \eqref{eq:diagonal}, 
$(\delta_j)_j$ is a basis of $H^{\star}(D)$, and 
$(\delta_j^\vee)_j$ is the dual basis with respect to the Poincaré pairing. 
All  of the cohomology of $D$ (including the primitive cohomology) appears
in \eqref{eq:diagonal}.
Therefore, although we can apply the degeneration formula to compute simple Gromov--Witten invariants of $X$, the result is in general expressed in terms of relative Gromov--Witten invariants of $X_1$
and $\widetilde{X}_2$ with insertions of primitive cohomology classes of $D$ \cite{li2001stable}. In sufficiently  low dimensions,  the primitive cohomology can sometimes be avoided, as shown in 
\cite{MP} for the quintic 3-fold, but in general, it cannot.

Our first step to compute the Gromov--Witten invariants of complete intersections with primitive insertions in all genera is to reduce the problem to computing nodal invariants of complete intersections with only simple insertions.

\subsection{Nodes and monodromy} 
\label{section_intro_monodromy}
A key idea of the paper is to trade primitive insertions against nodes in the domain curves by inverting the relations given by the splitting axiom for invariants with imposed nodes. 

For every graph $\Gamma$ decorated by genus and curve class data on its vertices, there is a moduli space $\oM_\Gamma(X)$ parameterizing stable maps to $X$ together with the data of a 
contraction
of the dual graph of the domain curve to $\Gamma$. In other words, edges of $\Gamma$ correspond to nodes imposed on the domain curve. The \emph{nodal Gromov--Witten invariants} of $X$ are defined by integration over
$\oM_\Gamma(X)$: 
\begin{equation} \label{eq_nodal_gw_intro}
\left\langle \prod_{i=1}^n \tau_{k_i}(\alpha_i)
\prod_h \tau_{k_h}\right\rangle_{\Gamma}^X
:= \int_{[\oM_{\Gamma}(X)]^\virt} \prod_{i=1}^n \psi_i^{k_i} \ev_i^{*}(\alpha_i) \prod_h \psi_h^{k_h} \,,
\end{equation}
where the second product runs over the marked points $h$ on the normalization of the domain curve created by the splitting of the imposed nodes.{\footnote{As before, $\psi_h$ is the first Chern classes of the corresponding cotangent line bundle, and $k_h\in \mathbb{Z}_{\geq 0}$.}} 

The nodal Gromov--Witten invariants can be computed in terms of ordinary Gromov--Witten invariants by the splitting axiom. Imposing a node in the domain curve is equivalent to requiring 
the two marked points obtained by normalizing the node to map to the same point of the target $X$.
The condition can be easily written as the pull-back of the diagonal class
$[\Delta]$ in $X \times X$ under the evaluation map at the two marked points to $X \times X$. 
For example, if $\Gamma$ is the graph with a single vertex decorated by 
$g,n,\beta$ and a single loop, then
\begin{equation} \label{eq_intro_splitting}
\left\langle \left( \prod_{i=1}^{n} \tau_{k_i}(\alpha_i) \right) 
\tau_{k_{h_1}} \tau_{k_{h_2}}\right\rangle_\Gamma^X 
=\sum_j \left\langle \left( \prod_{i=1}^{n} \tau_{k_i}(\alpha_i)\right) \tau_{k_{h_1}}(\delta_j) \tau_{k_{h_2}}(\delta_j^\vee) \right\rangle_{g,n+2,\beta}^X  \,, \end{equation}
where we have used the Künneth decomposition \eqref{eq:diagonal}  of $[\Delta]$.

The basis $(\delta_j)_j$ of $H^{\star}(X)$ may be chosen to contain 
a basis of the primitive cohomology of $X$. So even if we start with \emph{simple nodal Gromov--Witten invariants} of $X$, that is, nodal Gromov--Witten invariants with only simple insertions,
the splitting axiom expresses these invariants in terms of Gromov--Witten invariants with possibly primitive insertions. The first 
non-trivial result of our paper is that these relations given by the splitting axiom can be inverted: Gromov--Witten 
invariants with possibly primitive insertions can be reconstructed from simple nodal Gromov--Witten invariants.

\vspace{8pt}
\begin{citedthm}
\label{thm_intro_1}
Let $X$ be a complete intersection in projective space which is not a cubic surface or an even dimensional complete intersection of two quadrics.
Then, the Gromov--Witten invariants of $X$ can be effectively reconstructed from the simple nodal Gromov--Witten invariants of $X$.
\end{citedthm}

\vspace{8pt}
We give a brief sketch of the proof here (for details see sections \S \ref{sec:gw_ci1} and \S \ref{sec:gw_ci2}):

\vspace{8pt}
\noindent $\bullet$
The first step is to use the constraints given by the monodromy action on the primitive cohomology.
Let $$U \subset \prod_{i=1}^r \PP(H^0(\PP^{m+r},\cO(d_i)))$$ be the open subset parameterizing $m$-dimensional smooth complete intersections of degrees 
$(d_1,\dots,d_r)$ in $\PP^{m+r}$, and let $u \in U$ be the point corresponding to $X$. The fundamental group $\pi_1(U,u)$ of $U$ based at $u$ acts on the primitive cohomology 
$H^\star(X)_{\prim}$ and the (algebraic) \emph{monodromy group} is
by definition the Zariski closure of the image of $\pi_1(U,u)$ in 
the algebraic group $\mathrm{Aut} (H^\star(X)_{\prim})$ of linear automorphisms of $H^\star(X)_{\prim}$.
As the Gromov--Witten invariants are deformation invariant, they are invariant under the action of the monodromy group on the insertions. For all complete intersections (except for the exceptions in the statement of Theorem \ref{thm_intro_1}), the monodromy group is as large as possible \cite{ MR855873, weil2, deligne1973groupes}: it is the group of linear automorphisms of 
$H^\star(X)_{\prim}$ preserving the non-degenerate bilinear form induced by the Poincaré pairing
(an orthogonal group if $m$ is even and a symplectic group if $m$ is odd).
Using the theory of invariants of orthogonal/symplectic groups, we express Gromov--Witten invariants as linear combinations with coefficients to be determined in a basis of invariant tensors.

\vspace{8pt}
\noindent $\bullet$
The second step of the proof of Theorem  \ref{thm_intro_1}  is to show that these coefficients of invariant tensors are solutions of a system of linear equations obtained by applying the splitting axiom to simple nodal Gromov--Witten invariants, as in \eqref{eq_intro_splitting}. We then prove that this system of equations is invertible. Concretely, we have to show that an explicit matrix defined in terms of the combinatorics of ways to form $n$ pairs out of $2n$ objects is invertible. This matrix has appeared previously in mathematics\footnote{The closest appearance is related to the computation of the tautological ring in cohomology of a fixed smooth curve of genus $g$. 
In this context, invertibility implies the Gorenstein property, see \cite{Tavakol}.}, in particular in the theory of zonal symmetric functions \cite[\S VII.2]{MR553598},
and we apply existing results in representation theory to prove invertibility (other
proofs can be found in the literature).

\subsection{Nodal relative Gromov--Witten theory: degeneration}
\label{sec_intro_nodal_relative}
Theorem \ref{thm_intro_1} is clearly progress: it 
allows us to trade Gromov--Witten invariants with possibly primitive insertions, which cannot be used in the degeneration formula, against nodal Gromov--Witten invariants  with only simple insertions.
To compute such nodal Gromov--Witten invariants, we first provide an extension of the degeneration techniques in Gromov--Witten theory to the nodal setting. 

In the usual setting, the degeneration formula of Jun Li \cite{JunLi} is formulated using relative Gromov--Witten invariants, based on the moduli space of relative stable maps \cite{li2001stable} (motivated by earlier work in symplectic geometry by \cite{IonelParker, LiRuan}). Given a smooth projective variety $Y$ over 
$\CC$ and a smooth connected divisor $D\subset Y$, the moduli space of relative stable maps is a (virtual) compactification of the moduli space of stable maps to $Y$ with fixed tangency conditions along $D$. In general, a relative stable map is a map from a curve to an expanded target $Y[l]$
for some $l \in \Z_{\geq 0}$. Here, 
$Y[l]$ is the $l$-step expanded degeneration of $Y$ along $D$ obtained by attaching to $Y$ along $D$ a chain of $l$ copies of the $\PP^1$-bundle 
$\PP(N_{D|Y} \oplus \cO_D)$, where $N_{D|Y}$ is the normal line bundle to $D$ in $Y$. 

To extend the degeneration formula to the nodal setting, for every graph $\Gamma$ decorated by genus, curve classes, and contact orders, we define in 
\S \ref{subsec: Moduli spaces of nodal relative stable maps} a moduli space 
$\shP_\Gamma(Y,D)$ parameterizing relative stable maps to $(Y,D)$, with the data of a 
contraction
of the dual graph of the domain curve to $\Gamma$, and with the extra condition that the nodes of the domain curves imposed by the edges of $\Gamma$ are \emph{not} mapped to the singular locus of the expanded targets $Y[l]$. We prove that $\shP_\Gamma(Y,D)$ is a proper Deligne-Mumford stack which admits a virtual class $[\shP_\Gamma(Y,D)]^\virt$. We define 
the \emph{nodal relative Gromov--Witten invariants} of $(Y,D)$ by integration against the virtual class $[\shP_\Gamma(Y,D)]^\virt$. These are the correct invariants for the degeneration formula for nodal Gromov--Witten invariants.

\vspace{8pt}
\begin{citedthm}
\label{thm_intro_degeneration}
\hspace{-5pt}\footnote{As in the usual degeneration formula of Jun Li, the precise statement is in general more complicated when the monodromy around $0 \in B$ acts non-trivially on $H_2(W_{t\neq 0})$ \cite{liu2004degeneration}: in complete generality, the degeneration formula computes finite sums of Gromov--Witten invariants of the general fiber, and not individual invariants. See Theorem \ref{thm_degeneration_cycle} for details.}
Let $W \rightarrow B$ be a projective family with smooth total space over a smooth connected curve $B$ with
 a distinguished point $0 \in B$ such that 
 \begin{enumerate}
     \item [(i)]
the fibers $W_t$ over $t \in B\setminus\{0\}$
 are smooth varieties,
 \item[(ii)]
 the fiber $W_0$ over $0 \in B$ is the union of two smooth irreducible components
 $Y_1$ and $Y_2$ glued along a smooth connected divisor $D$. 
 \end{enumerate}
 Then, there is a degeneration formula expressing nodal Gromov--Witten invariants of the general fiber $W_{t \neq 0}$ with insertions in the image of the restriction map 
 $$H^\star(W) \rightarrow H^\star(W_{t\neq 0})$$
in terms of the nodal relative Gromov--Witten invariants of $(Y_1,D)$ and $(Y_2,D)$.
\end{citedthm}
\vspace{8pt}

The proof of Theorem \ref{thm_intro_degeneration}
is presented in \S \ref{section_degeneration} and consists in starting from the usual case of Jun Li's degeneration formula, and studying carefully the effect of imposing nodes on both sides of it. 

\subsection{Nodal relative Gromov--Witten theory: splitting}
\label{sec_intro_nodal_relative_splitting}
Finally, in order to make practical use of the nodal degeneration formula given by 
Theorem \ref{thm_intro_degeneration}, an efficient way to compute nodal relative Gromov--Witten invariants is needed. As already reviewed in \S \ref{section_intro_monodromy}, nodal Gromov--Witten invariants can be reduced to ordinary Gromov--Witten invariants using the splitting axiom. We prove a similar result for nodal relative Gromov--Witten invariants.

\vspace{8pt}
\begin{citedthm}
\label{thm_intro_splitting}
Let $Y$ be a smooth projective variety over $\C$ and $D\subset Y$ a smooth divisor.
Then, the nodal relative Gromov--Witten invariants of $(Y,D)$ can be effectively reconstructed from the Gromov--Witten invariants of $Y$, the Gromov--Witten invariants of $D$, and the restriction map $H^\star(Y) \rightarrow H^\star(D)$. 
\end{citedthm}
\vspace{8pt}

The proof of Theorem \ref{thm_intro_splitting} is given in \S \ref{section_splitting}. 
Unlike what happens in the absolute case, splitting a node is not simply equivalent to the insertion of the class of diagonal. Indeed, in the relative case, there is a correction term 
coming from the possibility for the node to fall into the divisor $D$ and then forcing the target to expand. The correction term is expressed in terms of rubber Gromov--Witten invariants of the 
$\PP^1$-bundle $\PP(N_{D|Y} \oplus \cO_D)$. In other words, we obtain a splitting formula expressing nodal relative Gromov--Witten invariants of $(Y,D)$ in terms of relative Gromov--Witten invariants of $(Y,D)$ and rubber Gromov-Witten invariants of $\PP(N_{D|Y} \oplus \cO_D)$, see Theorem \ref{thm_splitting_3}.
Finally, by \cite{MP}, the relative Gromov--Witten invariants of $(Y,D)$ and the rubber  invariants of 
$\PP(N_{D|Y} \oplus \cO_D)$ can be effectively reconstructed from the Gromov--Witten theory of 
$Y$ and the Gromov--Witten theory of $D$.

\subsection{The algorithm for complete intersections}
\label{sec_intro_algorithm}

Our main result is the following.

\begin{citedthm}
\label{thm_intro_main}
Let $X$ be an $m$-dimensional smooth complete intersection in $\PP^{m+r}$ of degrees $(d_1,\dots, d_r)$. 
Then, for every decomposition 
$$d_r=d_{r,1}+d_{r,2}\ \ \ \text{with} \ \ \ d_{r,1}, d_{r,2} \in \Z_{\geq 1}\, ,$$
the Gromov--Witten invariants of $X$ can be effectively 
reconstructed from:
\begin{itemize}
\item[(i)] the Gromov--Witten invariants of an $m$-dimensional smooth complete intersection $X_1 \subset \PP^{m+r}$ of degrees $(d_1,\dots, d_{r-1},d_{r,1})$,
\item[(ii)] the Gromov--Witten invariants of an $m$-dimensional smooth complete intersection $X_2 \subset \PP^{m+r}$ of degrees $(d_1,\dots,d_{r-1}, d_{r,2})$,
\item[(iii)] the Gromov--Witten invariants of an 
$(m-1)$-dimensional smooth complete intersection $D \subset \PP^{m+r}$ of degrees $(d_1,\dots,d_{r-1}, d_{r,1}, d_{r,2})$,
\item[(iv)] the Gromov--Witten invariants of an 
$(m-2)$-dimensional smooth complete intersection 
$Z \subset \PP^{m+r}$ of degrees $(d_1,\dots, d_{r-1},d_r,d_{r,1}, d_{r,2})$.
\end{itemize}
\end{citedthm}
\vspace{8pt}

To prove Theorem \ref{thm_intro_main}, we consider, following 
\cite[\S 0.5.4]{MP}, a degeneration with general fiber $X$ and special fiber $X_1 \cup_D \widetilde{X}_2$
obtained by factoring the degree $d_r$ defining equation of $X$ 
into factors of degrees $d_{r,1}$ and $d_{r,2}$,
\begin{equation}\label{deggg}
X \rightsquigarrow 
X_1 \cup_D \widetilde{X}_2\, .
\end{equation}
Here,
$\widetilde{X}_2$ is the blow-up of $X_2$ along $Z$, and 
$X_1 \cup_D \widetilde{X}_2$ denotes the union of $X_1$ and $\widetilde{X}_2$ transversally glued along a copy of $D$. 

By 
Theorem \ref{thm_intro_1}, when $X$ is not a cubic surface or an even dimensional complete intersection of two quadrics, the Gromov--Witten invariants of $X$ are determined by the simple nodal Gromov--Witten invariants of $X$. 
By the nodal degeneration formula of Theorem \ref{thm_intro_degeneration}
applied to the degeneration \eqref{deggg} 
the simple nodal Gromov--Witten invariants of $X$ can be computed in terms of the nodal relative Gromov--Witten invariants of $(X_1,D)$ and $(\widetilde{X}_2,D)$.
By the splitting formula of Theorem \ref{thm_intro_splitting}, these nodal relative Gromov--Witten invariants can be reconstructed from the Gromov--Witten invariants of $X_1$, $D$, and $\widetilde{X}_2$.
Finally, Gromov--Witten invariants of 
$\widetilde{X}_2$ are determined by the Gromov--Witten invariants of $X_2$ and $Z$
by the blow-up result{\footnote{The more basic
blow-up result of  \cite[Lemma 1]{MP} could also be used (including the Gromov-Witten invariants also
of $D$).}} of \cite[Theorem B]{fan2017chern}.

For the special cases when $X$ is a cubic surface or an even dimensional complete intersection of two quadrics, we show by a direct topological study
that there is actually no monodromy acting on the cohomology in the degeneration \eqref{deggg}
and so the usual degeneration formula can be applied.

As Theorem \ref{thm_intro_main} computes Gromov--Witten invariants of a complete intersection in terms of Gromov--Witten invariants of complete intersections of either smaller degree or smaller dimension, it can be used 
recursively to compute Gromov--Witten invariants of all complete intersections in terms of Gromov--Witten invariants of projective spaces, which are known by localization \cite{GP}.


\subsection{Tautological classes} \label{sec_intro_tautological}
For every $g, n \in \Z_{\geq 0}$ such that $2g-2+n>0$, the moduli space $\oM_{g,n}$ of $n$-pointed genus $g$ connected stable curves is a smooth proper Deligne--Mumford stack of dimension $3g-3+n$. In particular, one can use Poincaré duality to identify $H_\star(\oM_{g,n})$
with $H^{2(3g-3+n)-\star}(\oM_{g,n})$ and define push-forward maps in cohomology. 

The (cohomological) \emph{tautological rings} $RH^\star(\oM_{g,n})$ are most compactly defined 
\cite{FP1} as the smallest system of subrings (with unit) of the cohomology rings 
$H^{\star}(\oM_{g,n})$ stable under push-forward and pull-back by the maps 
\begin{itemize}
    \item[(i)] $\oM_{g,n+1} \rightarrow \oM_{g,n}$ forgetting one of the markings,
    \item[(ii)] $\oM_{g_1,n_1+1} \times \oM_{g_2,n_2+1} \rightarrow \oM_{g_1+g_2,n_1+n_2}$ gluing two curves at a point, 
    \item[(iii)] $\oM_{g-1,n+2} \rightarrow \oM_{g,n}$ gluing together two markings of a curve. 
\end{itemize}
Elements of the tautological rings are referred to as \emph{tautological classes}.
For example, $\psi$, $\kappa$, and $\lambda$ classes are tautological \cite{FP, Mumford}.
We refer to \cite{FP-tautandnontautcoh,Pcalculus} for a review of the great amount of recent activity on the structure of the tautological rings. 

For every smooth projective variety $X$, the Gromov--Witten classes 
\eqref{eq_gw_class} are elements of $H^{\star}(\oM_{g,n})$, and so it is natural to ask whether these classes are tautological \cite{FP1}. There are currently no known counterexamples to the bold conjecture that Gromov--Witten classes are always tautological. As the tautological rings are generated by classes of algebraic cycles, the conjecture implies, in particular, that Gromov--Witten classes in odd cohomological degree or, more generally, of Hodge type $(p,q)$ with $p \neq q$ should vanish. Prior to the present work, there were only two main families of varieties whose
Gromov--Witten classes were known to be tautological
(and nontrivial): 
\begin{itemize}
    \item[(i)] Toric and homogeneous varieties $G/P$, for which the result follows from the localization formula \cite{GP} and the fact that $\lambda$ classes are tautological \cite{Mumford}.
    \item[(ii)] Curves, for which the result has been shown by Janda \cite{Janda} using the Okounkov--Pandharipande study of Gromov--Witten theory of curves \cite{OP2, OP1, OP}.
\end{itemize}
The Gromov-Witten classes of other families of varieties 
can be seen to be tautological using (i) and (ii):
\begin{itemize}
\item[(iii)] Calabi-Yau varieties of all dimensions (the only nontrivial case is for elliptic curves covered by (ii)). 
    \item[(iv)] Rational surfaces and birationally ruled surfaces (using deformation invariance and (i) for rational surfaces and  localization and (ii) for the ruled case).
    \item[(v)] Products $X=X_1\times X_2$, where both $X_1$ and $X_2$ have tautological Gromov-Witten classes, also have tautological Gromov-Witten classes by \cite{Beh-prod}.
\end{itemize}
Finally, various sporadic examples of tautological
Gromov-Witten theories are known. For example,
the Gromov-Witten theory of Enriques surfaces 
is tautological by 
the degeneration method of \cite{MaulikP}, 
the reduced theory of $K3$ surfaces in 
primitive curves classes is tautological by
\cite{MaulikPThomas}, and the 
Gromov--Witten theory of even dimensional complete intersections of two quadrics is tautological by the combination of the recent result of Hu \cite{hu2021computing}
showing the quantum cohomology is generically semisimple with the Givental-Teleman classification of semisimple cohomological field theories
\cite{Tel}.{\footnote{Work in progress by D. Maulik and D.Ranganathan using logarithmic degenerations shows the push-forward to the moduli of curves of the fundamental class of the moduli space of stable maps is tautological for a large class of surfaces.}}

By lifting the algorithm described in \S \ref{sec_intro_algorithm} to the level of Gromov--Witten classes,
we are able to add a fundamentally 
new family to this list.

\vspace{5pt}
\begin{citedthm}
\label{thm_intro_tautological}
Let $X$ be a smooth complete intersection in projective space. Then, the Gromov--Witten classes of $X$ are tautological.
\end{citedthm}
\vspace{5pt}

We also obtain a slightly different proof using nodal Gromov--Witten theory of the result of Janda \cite{Janda} for curves: 
by degeneration, the study of a genus $g$ curve can be reduced to the study of a genus $1$ curve, which is a cubic curve in $\PP^2$ and for which Theorem \ref{thm_intro_tautological} applies (see Remark \ref{rem_tautological_curves}).

More generally, the methods of this paper can be applied to the study of the
Gromov--Witten theory of varieties $X$ which vary in families with good degenerations and large monodromy. A natural question is to consider complete
intersections in toric varieties and homogeneous spaces. Further results along these
lines will be presented in \cite{ABPZ2}.

\subsection{Related work} Building on the techniques developed here, Oberdieck 
\cite{oberdieck} 
has recently proven the Gromov--Witten/Donaldson--Thomas correspondence conjectured in \cite{MNOP1,MNOP2} for complete intersection Fano threefolds (including primitive classes).

\subsection{Acknowledgments}
We thank D.~Abramovich, A. Beauville, M.~Bousquet-M\'elou, J. Bryan, 
C.~Faber, T.~Graber, F.~Janda, Y.-P.~Lee,
D.~Maulik, A.~Okounkov, A.~Pixton, D.~Ranganathan, and Y.~Ruan for
conversations over the years related to monodromy and degeneration in
Gromov--Witten theory. The last steps of the argument were completed
at the
{\em Helvetic Algebraic Geometry Seminar}
hosted by A. Szenes at the University of Geneva in August 2021.

H.A. was supported by Fondation
Math\'ematique Jacques Hadamard.
R.P. was supported by SNF-200020-182181,  ERC-2017-AdG-786580-MACI, and SwissMAP. 
D.Z. was supported by ANR-18-CE40-0009 ENUMGEOM.
This project has received funding
from the European Research Council (ERC) under the European Union Horizon 2020 research and innovation program (grant agreement No 786580).


\section{Nodal Gromov--Witten theory}
\label{NGWTh}
Throughout the paper, unless explicitly specified, homology and cohomology groups are taken with $\Q$ coefficients. We systematically use intersection theory as developed in \cite{Fulton}, extended to Deligne--Mumford stacks in \cite{vistoli}, and 
to Artin stacks in \cite{Kresch}. 

We start by reviewing the notation for graphs which we will use for studying nodal Gromov--Witten invariants in what follows. 

\subsection{Graphs}
A \emph{graph} $\Gamma$ consists of the following data: 
\begin{itemize}
\item[(i)]    $\V_{\Gamma}$ : a finite set of vertices. 
\item[(ii)]    $\H_{\Gamma}$ : 
a finite set of half-edges equipped with a vertex assignment
$$\v :  \H_{\Gamma} \to  \V_{\Gamma}$$
and an involution $\iota$. 
The set
$\E_\Gamma$ of 2-cycles of $\iota$
is the set of edges of $\Gamma$, 
and the set $L_\Gamma$ of fixed points of 
$\iota$ is the set of legs of $\Gamma$. We denote by $n_\Gamma$ the cardinality of $L_{\Gamma}$. 
\item[(iii)] $\ell_\Gamma$ : a bijection between the set of legs
$L_\Gamma$  and an ordered set of markings.
\end{itemize}
The graph $\Gamma$ is not required to be connected.

For every variety $X$ over $\C$, we denote by $H_2^+(X) \subset H_2(X,\Z)$ the monoid{\footnote{In particular, we consider $0\in H_2(X,\mathbb{Z})$ to be an effective curve class.}}  generated by effective curve classes in $X$. 
We recall the definition of $X$-valued stable graphs{\footnote{Such graphs also appear in \cite{BehrendManin}, where for an $X$-valued stable graph, the terminology stable $A$-graph is used, where $A=H_2^+(X)$).}} 
below, following \cite[\S 0.3]{janda2020double}.

\begin{definition}
\label{Def: Xvalued}
An {\em $X$-valued stable graph} is a graph $\Gamma$ equipped with  
\begin{itemize}
    \item[(i)] a genus function $\g:\V_{\Gamma} \to \Z_{\geq 0}$,  
    \item[(ii)] a map $\beta: \V_{\Gamma} \to H_2^+(X)$ assigning a curve class to each vertex of $\Gamma$,
\end{itemize}
satisfying the stability condition: if $\beta(v)=0$,
then
\begin{equation}
2\g(v)-2+ \n(v) >0\,,
\end{equation}
where $\n(v) = |\v^{-1}(v)|$ is the valence of $\Gamma$ at $v$, that is, the number of half-edges adjacent to $v$, which by definition are formed either as part of an edge or a leg. 
\hspace*{\fill} 
$\Diamond$
\end{definition} 

The following definition will be used systematically in the upcoming sections for comparing general graphs with both edges and legs to much simpler graphs without any edges, but just with legs.

\begin{definition} \label{def_contraction}
Given an $X$-valued stable graph $\Gamma$, we denote by 
$\overline{\Gamma}$ the $X$-valued stable graph without edges obtained from
$\Gamma$
by contracting all edges. Explicitly, for every connected component 
$\gamma$ of $\Gamma$, with set of vertices $V_\gamma$, we have a vertex 
$v_\gamma$ in $\overline{\Gamma}$, with genus
\[ \g(v_\gamma):= \sum_{v \in V_\gamma} \g(v) + h^1(\gamma)\,,\] 
where $h^1(\gamma)$ is the first Betti number of $\gamma$, and class
\[ \text{\hspace{150pt}}
\beta(v_\gamma):= \sum_{v \in V_\gamma} \beta(v)\,.
\text{\hspace{150pt} $\Diamond$}
\]
\end{definition}

\begin{figure}
\resizebox{.9\linewidth}{!}{\input{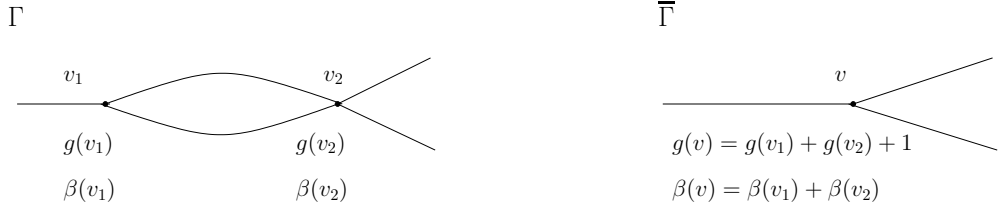}}
\caption{An $X$-valued stable graph $\Gamma$ and the contraction $\overline{\Gamma}$}
\label{Fig: gammabar}
\end{figure}


\subsection{Nodal curves}
\label{sec:moduli_curves}
An {\em $H_2^+(X)$-stable curve} of class $\beta \in H_2^+(X)$ 
is a prestable curve $C$ with a de\-co\-ration $\beta(C') \in H_2^+(X)$ for each irreducible component $C'$ of~$C$, such that 
\[ \sum_{C'} \beta(C') =\beta \,,\]
and which further satisfies the \emph{$H_2^+(X)$-stability condition}:
\begin{itemize}
    \item[(i)] 
if a genus $0$ component is decorated by the curve class $0$, then it contains at least 3 special points, and
    \item[(ii)] 
if a genus $1$ component is decorated by the curve class $0$, then it contains at least $1$ special point, 
\end{itemize}
where a special point is either a node or a marked point.
In the situation when we consider a family of $H_2^+(X)$-stable curves, if an irreducible component $C'$ degenerates into a union of several irreducible components $C'_1, \dots, C'_k$, the decorations are required to satisfy $\beta(C') = \sum_{i=1}^k \beta(C'_i)$.

For every $g,n \in  \Z_{\geq 0}$ and $\beta \in H_2^+(X)$, we denote by $\ooMM_{g,n,\beta}$ the moduli stack of genus $g$, $n$-pointed connected $H_2^+(X)$-stable curves of class $\beta$.
It is shown in \cite[Proposition 2.0.2]{costello} that the stack $\ooMM_{g,n,\beta}$ is smooth and the natural morphism 
forgetting the curve class decorations
\begin{equation}
    \label{Eq:forget}
\ooMM_{g,n,\beta}\longrightarrow \ooMM_{g,n}    
\end{equation}
to the moduli stack of $n$-pointed genus 
$g$ prestable curves is étale. 

\begin{definition} \label{def:gamma_curve}
Let $\Gamma$ be an $X$-valued stable graph. A \emph{$\Gamma$-curve} $(C_v \to S)_{v \in V_\Gamma}$ over a scheme $S$ is the data, for each vertex~$v$ of~$\Gamma$, of an $S$-point $C_v \to S$ of $\ooMM_{\g(v),\n(v),\beta(v)}$ and a one-to-one correspondence between its $\n(v)$ sections of marked points and the half-edges of~$\Gamma$ adjacent to~$v$.
 \hspace*{\fill} $\Diamond$
\end{definition} 
\vspace{8pt}

\begin{definition}
\label{Def: glued}
Let  $(C_v \to S)_{v \in V_\Gamma}$ be a $\Gamma$-curve over a scheme~$S$. The prestable curve $C \to S$ \emph{formed by  $(C_v \to S)_{v \in V_\Gamma}$} is the prestable curve obtained from $\bigcup_{v \in V_\Gamma} C_v$ by gluing for each edge $e$ of $\Gamma$, the sections of marked points corresponding to the half-edges composing~$e$. \hspace*{\fill} $\Diamond$
\end{definition}
\vspace{8pt}

\begin{definition} \label{def:gamma_marking}
A \emph{$\Gamma$-marking} of a prestable curve $C \to S$ is the data of a $\Gamma$-curve $(C_v \to S)_{v \in V_\Gamma}$, such that $C \to S$ is formed by  $(C_v \to S)_{v \in V_\Gamma}$. A prestable curve $C \to S$ endowed with a $\Gamma$-marking is called {\em $\Gamma$-marked}. For every edge $e$ of $\Gamma$, the family of {\em nodes $\Gamma$-marked by $e$} is the section of $C \to S$ obtained by the identification of the two sections corresponding to the half-edges of~$e$. 
\hspace*{\fill} $\Diamond$
\end{definition}  
\vspace{8pt}

Figure~\ref{Fig: g2g3} represents an $X$-valued stable graph $\Gamma$ and a $\Gamma$-marked prestable curve.
\begin{figure}
\resizebox{.9\linewidth}{!}{\input{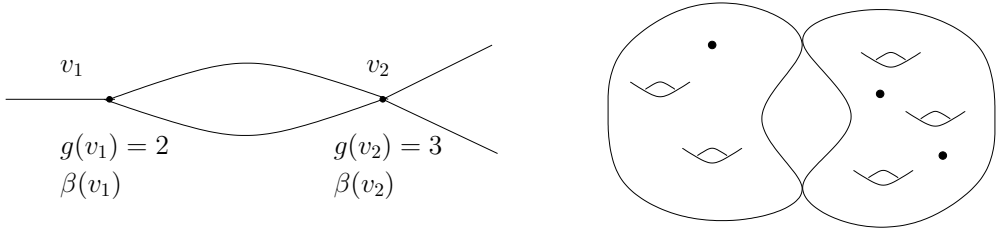}}
\caption{An $X$-valued stable graph $\Gamma$ and a $\Gamma$-marked prestable curve}
\label{Fig: g2g3}
\end{figure}
A $\Gamma$-marking automatically endows a prestable curve with the structure of an $H_2^+(X)$-stable curve.

Let $\ooMM_\Gamma$ be the moduli stack of 
$\Gamma$-marked prestable curves. By Definition \ref{def:gamma_marking}, we have 
a natural isomorphism
$$\ooMM_\Gamma \simeq \prod_{v \in V_{\Gamma}}
\ooMM_{\g(v),\n(v),\beta(v)}\,.$$ 
As the stacks 
$\ooMM_{\g(v),\n(v),\beta(v)}$ are smooth, the stack $\ooMM_\Gamma$ is also smooth.

Note that a $\Gamma$-marked prestable curve has nodes imposed by the edges of $\Gamma$. Let $\overline{\Gamma}$ be the $X$-valued stable graph without edges, obtained from $\Gamma$ by contraction of all edges, as in Definition~\ref{def_contraction}.
Then, there is a canonical morphism
\begin{equation}\label{eq_iota_gamma}
\iota_\Gamma : \ooMM_\Gamma \rightarrow \ooMM_{\overline{\Gamma}}\,,
\end{equation}
as a $\Gamma$-marked prestable curve is naturally $\overline{\Gamma}$-marked. The morphism 
$\iota_\Gamma$ is finite, unramified, local complete intersection (lci) of codimension $|E_\Gamma|$, with normal bundle 
\begin{equation}\label{eq_normal}
N_{\iota_\Gamma} := \iota_\Gamma^{*}T_{\ooMM_{\overline{\Gamma}}}/T_{\ooMM_\Gamma}
= \bigoplus_{e=\{h,h'\}\in E_\Gamma} \LL_h^\vee \otimes \LL_{h'}^\vee\,,\end{equation}
where $T_{\ooMM_{\overline{\Gamma}}}$ and $T_{\ooMM_\Gamma}$ denote the tangent bundles to $\ooMM_{\overline{\Gamma}}$ and $\ooMM_\Gamma$ respectively, and for every half-edge $h$ of $\Gamma$, we denote by
$\LL_h$ the line bundle on $\ooMM_\Gamma$ given by the cotangent line at the marked point corresponding to $h$. Indeed, the summand in \eqref{eq_normal} indexed by the edge $e$ of 
$\Gamma$ is the contribution of the smoothing of the node which is $\Gamma$-marked by $e$ to the normal bundle $N_{\iota_\Gamma}$. 

As $\iota_\Gamma$ is local complete intersection of codimension $|E_\Gamma|$, for any stacks $M_{\overline{\Gamma}}$ and $M_\Gamma$ fitting into a fiber diagram 
\[\begin{tikzcd}
M_\Gamma  
\arrow[r]
\arrow[ d]
&
M_{\overline{\Gamma}}
\arrow[d]\\
\ooMM_\Gamma
\arrow[r,"\iota_\Gamma"]& \ooMM_{\overline{\Gamma}}\,,
\end{tikzcd}\]
there exists a well-defined Gysin pull-back \cite[Chapter 6]{Fulton}
$$ \iota_\Gamma^! \colon H_{*}(M_{\overline{\Gamma}}) \longrightarrow H_{*-2|E_\Gamma|}(M_\Gamma) \,.$$

\subsection{Nodal Gromov--Witten theory}
\label{sec:moduli_nodal}
Let $X$ be a smooth projective variety over $\C$.
For every $g,n \in \Z_{\geq 0}$ and $\beta \in H_2^+(X)$,
let $\oM_{g,n,\beta}(X)$ be the moduli stack of genus $g$, $n$-pointed, connected stable maps to $X$ of class $\beta$
\cite{FuP,kontsevich1995enumeration}, and let
$[\oM_{g,n,\beta}(X)]^\virt$ be its virtual class given by Gromov--Witten theory \cite{behrend97gw, BF}.
The natural morphism 
$$\oM_{g,n,\beta}(X) \longrightarrow \ooMM_{g,n}$$ 
remembering the domain curve 
factors through the morphism in \eqref{Eq:forget} via a morphism 
\begin{equation}\label{eq_epsilon}
\epsilon \colon \oM_{g,n,\beta}(X) \longrightarrow \ooMM_{g,n,\beta}\end{equation}
remembering the domain curve and the curve class of  each irreducible component.

\begin{definition}
For every $X$-valued stable graph $G$ without edges, we define 
a moduli stack of (possibly disconnected) stable maps
\begin{equation} \label{eq_moduli_no_nodes} 
\oM_G(X) := \prod_{v \in V_G} \oM_{\g(v),\n(v),\beta(v)}(X)\,, 
\end{equation}
with a virtual class
\begin{equation}\label{eq_G_virtual}
[\oM_G(X)]^\virt := \prod_{v \in V_G} [\oM_{\g(v),\n(v),\beta(v)}(X)]^\virt\,. \end{equation}
We denote by 
\begin{equation} \label{eq_epsilon_G}
\epsilon_G \colon \oM_G(X) \rightarrow \ooMM_G\end{equation}
the morphism induced by the morphisms \eqref{eq_epsilon}. \hspace*{\fill} $\Diamond$
\end{definition}  
\vspace{8pt}

\begin{definition}
For every $X$-valued stable graph $\Gamma$,
we define the moduli stack $\oM_\Gamma(X)$ of
\emph{$\Gamma$-marked stable maps} to~$X$ by the fiber diagram
\[\begin{tikzcd}
\oM_\Gamma(X)  
\arrow[r]
\arrow["\epsilon_\Gamma"', d]
&
\oM_{\overline{\Gamma}}(X)
\arrow[d,"\epsilon_{\overline{\Gamma}}"]\\
\ooMM_\Gamma
\arrow[r,"\iota_\Gamma"]& \ooMM_{\overline{\Gamma}}\,,
\end{tikzcd}\]
where 
$\overline{\Gamma}$ is the $X$-valued stable graph without edges obtained from $\Gamma$ by contraction of all edges, as in Definition \ref{def_contraction}, and 
$\oM_{\overline{\Gamma}}(X)$ and $\epsilon_{\overline{\Gamma}}$ are defined by \eqref{eq_moduli_no_nodes} and \eqref{eq_epsilon_G} applied to $G=\overline{\Gamma}$.
In other words, a $\Gamma$-marked stable 
map 
is a stable map with the data of a
$\Gamma$-marking of its domain curve.

We define a virtual class on $\oM_\Gamma(X)$ by 
\begin{equation} \label{eq_def_absolute}
[\oM_\Gamma(X)]^\virt := \iota_\Gamma^! [\oM_{\overline{\Gamma}}(X)]^\virt \,,\end{equation}
where $[\oM_{\overline{\Gamma}}(X)]^\virt$ is defined by 
\eqref{eq_G_virtual} applied to $G=\overline{\Gamma}$. \hspace*{\fill} $\Diamond$
\end{definition} 

\begin{definition}
\label{def_nodal_gw}
Let $\Gamma$ be an $X$-valued stable graph. Nodal Gromov--Witten invariants of $X$ of type 
$\Gamma$ are 
\begin{equation} \label{eq_nodal_gw}
   \left\langle \prod_{i=1}^{n_\Gamma} \tau_{k_i}(\alpha_i) 
   \prod_{h\in H_\Gamma \setminus L_\Gamma} \tau_{k_h}
   \right\rangle_\Gamma^X :=
    \int_{[\oM_\Gamma(X)]^\virt} \prod_{i=1}^{n_\Gamma} \psi_i^{k_i}\,\ev_i^{*}(\alpha_i) \prod_{h \in H_\Gamma \setminus L_\Gamma} \psi_h^{k_h} \,,
\end{equation}
where:
\begin{itemize}
\item[(i)] for every $1 \leq i \leq n_\Gamma$, $\ev_i$ is the evaluation morphism at the $i$-th leg of $\Gamma$, 
$k_i$ is a nonnegative integer, $\alpha_i \in H^{\star}(X)$,
and $$\psi_i :=c_1(\LL_i)$$ where $\LL_i$ is the line bundle over $\oM_\Gamma(X)$
formed by the cotangent lines at the $i$-th marked point of the domain curves, 
\item[(ii)] for every half-edge $h$ of $\Gamma$ which is not a leg, adjacent to a vertex $v$, $k_h$ is a nonnegative integer and
$$\psi_h :=c_1(\LL_h)$$ where $\LL_h$ is the line bundle over 
$\oM_\Gamma(X)$ formed by the cotangent lines at the marked point of the curve $C_v$ corresponding to $h$.
\end{itemize}

A \emph{nodal Gromov--Witten invariant} of $X$ is by definition a nodal Gromov--Witten invariant of $X$ of type $\Gamma$ for some $X$-valued stable graph $\Gamma$. When $\Gamma$ has a single vertex, nodal Gromov--Witten invariants of type 
$\Gamma$ are just ordinary (connected) Gromov--Witten invariants. \hspace*{\fill} $\Diamond$
\end{definition}  
\vspace{8pt}

As \eqref{eq_nodal_gw} is a linear function of each $\alpha_i$, we may assume without loss of generality 
that $\alpha_i$'s in \eqref{eq_nodal_gw}
are elements of a fixed basis of $H^\star(X)$.
More general nodal invariants can be defined by allowing evaluation classes at the markings
${h\in H_\Gamma \setminus L_\Gamma}$. However, for our study, only the more restrictive
invariants of Definition \ref{def_nodal_gw} are needed.

\subsection{Review of relative Gromov--Witten theory}
\label{subsec: Moduli spaces of relative stable maps}

\subsubsection{Graphs for the relative theory} Let $X$ be a smooth projective variety over $\C$, and
let $D \subset X$ be a smooth divisor with connected components $(D_j)_{j\in J}$. 
\begin{definition} \label{Def: XDvalued}
An \emph{$(X,D)$-valued stable graph} $\Gamma$ is an $X$-valued stable graph as in Definition \ref{Def: Xvalued}
with the additional data of:
\begin{enumerate}
    \item[(i)] a partition 
$L_\Gamma=L_{\Gamma,I} \sqcup \bigsqcup_{j \in J} L_{\Gamma,D_j} $ of the set of legs into a set $L_{\Gamma,I}$ of interior legs, and sets $L_{\Gamma, D_j}$ of relative legs associated to the divisors $D_j$.
\item[(ii)] a relative multiplicity function $\mu_{\Gamma} \colon \bigsqcup_{j \in J} L_{\Gamma, D_j} \rightarrow \Z_{>0}$.
\end{enumerate}
Given an $(X,D)$-valued stable graph $\Gamma$,
we denote the number of interior and
relative legs by $$n_{\Gamma,I}:=|L_{\Gamma,I}|\ \ \ \text{and}\ \ \  n_{\Gamma,D}:= \sum_{j \in J}|L_{\Gamma,D_j}|$$
respectively. The total number of legs is $n_\Gamma= n_{\Gamma,I}
+n_{\Gamma,D}$. \hspace*{\fill} $\Diamond$
\end{definition}  
\vspace{8pt}

Given an $(X,D)$-valued stable graph 
$\Gamma$, we use the notation $\overline{\Gamma}$ to denote the 
$(X,D)$-valued stable graph without edges obtained from 
$\Gamma$ by contraction of all edges, analogously as in Definition 
\ref{def_contraction}. 

For every $(X,D)$-valued stable graph $G$ without edges, there is a moduli stack
\begin{equation} \label{eq_relative_moduli}
\oM_G(X,D)\end{equation}
defined by Jun Li \cite{li2001stable, JunLi}
of $G$-marked relative stable maps to $(X,D)$. 
The connected components $C_v$ of the domain curve of a $G$-marked relative stable map are indexed by 
the vertices $v \in V_G$. The relative stable map restricted to $C_v$ is
of genus $\g(v)$ with
$\n(v)$ marked points and class $\beta(v)$. In addition, the relative multiplicities along $D$ are prescribed by $\mu_G$. We denote by $[\oM_G(X,D)]^\virt$ the virtual class defined by relative Gromov--Witten theory \cite{li2001stable, JunLi}.

\subsubsection{Relative stable maps}
We briefly recall here the basics of the theory of relative stable maps \cite{li2001stable, JunLi}, 
in particular the notion of expanded degeneration and the predeformability condition, which will play a crucial role here.
To simplify the exposition, we assume that $D$ is connected. To treat the general case of a possibly disconnected divisor $D$, the only change is that  different orders of expansions along each connected component of $D$
are permitted.

Let $\bP$ be the $\PP^1$-bundle over $D$ given by 
$$\bP:= \PP(N_{D|X} \oplus \cO_D)\, ,$$ where 
$N_{D|X}$ is the normal line bundle to $D$ in $X$.
The bundle $\bP$ has two natural disjoint sections; one with normal bundle 
$N_{D|X}^\vee$, called the zero section, and the other with normal bundle 
$N_{D|X}$, called the infinity section.
For every $l \in \Z_{\geq 0}$, let $\bP_l$ be the variety obtained by gluing together $l$ copies of $\bP$, where the infinity section of the $i$th
component is glued to the zero section of the $(i + 1)$st for all 
$1 \leq i\leq l-1$.
We denote by $D_0 \subset \bP_l$ the zero section of the first copy of $\bP$ and by $D[l] \subset \bP_l$ the infinity section of the last copy of $\bP$.
The \emph{$l$-step expanded degeneration} of the pair 
$(X,D)$ is the pair $(X[l],D[l])$, where $X[l]$
is the variety obtained by gluing $X$ along $D$ to $\bP_l$ along $D_0$. The subvariety
$\bP_l \subset X[l]$ is called the \emph{expansion}, and each component 
$\bP \subset \PP_l$ is called a \emph{bubble}. We illustrate the $2$-step expanded degeneration in Figure \ref{Fig: bubbles}.

\begin{figure}
\resizebox{.5\linewidth}{!}{\input{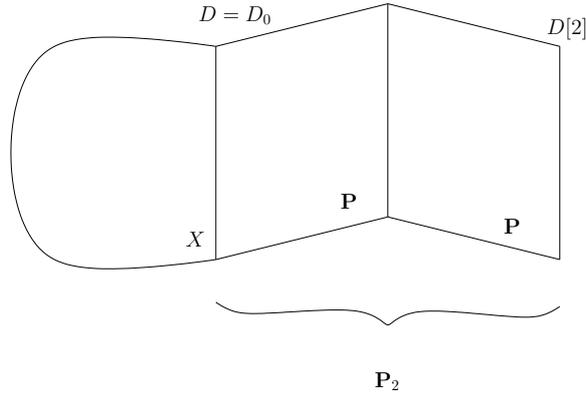}}
\caption{The $2$-step expanded degeneration $X[2]$ of $(X,D)$}
\label{Fig: bubbles}
\end{figure}

In \cite{li2001stable}, Jun Li defines through explicit constructions of versal deformations of the spaces $X[l]$, a notion of a family of expanded degenerations and constructs a moduli stack
$\mathcal{T}$ of expanded degenerations, along with a universal family 
$\mathcal{X} \rightarrow \mathcal{T}$. This universal family has the property that for every scheme $S$ and morphism 
$S \rightarrow \mathcal{T}$, the pull-back family $$\mathcal{X}_S := \mathcal{X} \times_{\mathcal{T}} S \rightarrow S$$ is a flat and proper morphism with every geometric fiber  isomorphic to $X[l]$ for some $l$.

\begin{definition} \label{def_relative_map}
A \emph{relative stable map} to $(X,D)$ with relative multiplicities $\mu$ over a scheme $S$ 
is a commutative diagram 
\[\begin{tikzcd}
C 
\arrow[r]
\arrow[d]
&
\mathcal{X}
\arrow[d]\\
S
\arrow[r]& \mathcal{T}\,,
\end{tikzcd}\]
where $C \rightarrow S$ is a prestable curve, 
$\mathcal{X} \rightarrow \mathcal{T}$ is the universal family of expanded degenerations of $(X,D)$, and such that the following conditions are satisfied:
\begin{itemize}
    \item[(i)] Let $f \colon C \rightarrow \mathcal{X}_S:= \mathcal{X}\times_{\mathcal{T}} S$ be the induced $S$-morphism. 
    Then, for every geometric fiber, $$f_s \colon C_s \rightarrow \mathcal{X}_{S,s}\simeq X[l]\, ,$$ over $s \in S$, no irreducible component of $C_s$ is entirely mapped by $f_s$ into the singular locus of $X[l]$ or the divisor $D[l] \subset X[l]$. In addition, the relative multiplicities along $D[l]$ are fixed to be given by $\mu$.
    \item[(ii)] Every geometric fiber $f_s \colon C_s \rightarrow \mathcal{X}_{S,s}$ over $s \in S$ is \emph{stable} in the sense that there are finitely many pairs $(r_1,r_2)$, where $r_1$ is an automorphism of $C_s$, $r_2$ is an automorphism of 
    $\mathcal{X}_{S,s}$ fixing $X$, and $f_s \circ r_1=r_2 \circ f_s$.
    \item[(iii)] For each point $s \in S$ and $p \in C_s$ such that $f_s(p)$ is contained in the singular locus of $\mathcal{X}_{S,s}\simeq X[l]$, $f$ is 
    \emph{predeformable} at $p$, that is, $p$ is a node of $C_s$, and étale-locally on $C$, and smooth-locally on $\mathcal{X}_S$, the morphism $f$ admits the following
form: \[ \Spec A[x,y]/(xy-t) \rightarrow \Spec A[u,v]/(uv-w) \]
over $\Spec A$, for some algebra $A$ and $t,w \in A$, where $w = t^n$, $u \mapsto x^n$, and 
$v \mapsto y^n$ for some $n \in \Z_{\geq 1}$.
\end{itemize}
The nodes of the domain curve of a relative stable map which 
map to the singular locus of the expanded target (as described in (iii))   are called \emph{distinguished} nodes. 
An isomorphism between two relative stable maps is an isomorphism between the corresponding diagrams, which is the identity on $S$ and $\mathcal{T}$, and an automorphism fixing $X$ on $\mathcal{X}$. \hspace*{\fill} $\Diamond$
\end{definition}  
\vspace{8pt}

Definition \ref{def_relative_map} in particular implies that two
relative stable maps with target $X[l]$ which only differ by the action of $\mathbb{G}_m^l$ on $X[l]$ by scaling of the fibers of the bubbles are isomorphic.

\subsection{Nodal relative Gromov--Witten theory}
\label{subsec: Moduli spaces of nodal relative stable maps}
Throughout this section $X$ denotes a smooth projective variety over $\C$,  and $D\subset X$ a smooth divisor. Forgetting the relative information (the data (i) and (ii) of Definition \ref{Def: XDvalued}), 
we view every $(X,D)$-valued stable graph $\Gamma$ as an
$X$-valued stable graph. Recall that, we denote by $\overline{\Gamma}$ the graph obtained by contracting all edges of $\Gamma$, and we have a natural morphism $\iota_\Gamma \colon \ooMM_\Gamma \rightarrow \ooMM_{\overline{\Gamma}}$
defined in \eqref{eq_iota_gamma}, where $\ooMM_\Gamma$ and $\ooMM_{\overline{\Gamma}}$ denote the moduli stacks of $\Gamma$-marked and $\overline{\Gamma}$-marked prestable curves respectively. We denote by 
\begin{equation}\label{eq_epsilon_G_relative}
\epsilon_{\overline{\Gamma}} \colon \oM_{\overline{\Gamma}}(X,D) \rightarrow \ooMM_{\overline{\Gamma}}\,,\end{equation}
the morphism defined as in \eqref{eq_epsilon_G}, applied to $G=\overline{\Gamma}$. 

\begin{definition} \label{def_M_Gamma}
For every $(X,D)$-valued stable graph $\Gamma$, 
we define the moduli stack $\oM_\Gamma(X,D)$ of
\emph{$\Gamma$-marked relative stable maps} by the fiber diagram
\[\begin{tikzcd}
\oM_\Gamma(X,D)  
\arrow[r]
\arrow["\epsilon_\Gamma"', d]
&
\oM_{\overline{\Gamma}}(X, D)
\arrow[d,"\epsilon_{\overline{\Gamma}}"]\\
\ooMM_\Gamma
\arrow[r,"\iota_\Gamma"]& \ooMM_{\overline{\Gamma}}\,,
\end{tikzcd}\]
where $\overline{\Gamma}$ is the $(X,D)$-valued stable graph obtained from $\Gamma$ by contraction of all edges as in Definition \ref{def_contraction},
$\oM_{\overline{\Gamma}}(X,D)$
and $\epsilon_{\overline{\Gamma}}$
are defined by \eqref{eq_relative_moduli}
and \eqref{eq_epsilon_G_relative}.
We define a virtual class on the moduli space $\oM_\Gamma(X,D)$ by 
\begin{equation}
    \label{eq_def_relative}
    [\oM_\Gamma(X,D)]^{\virt} := \iota_\Gamma^! [\oM_{\overline{\Gamma}}(X,D)]^\virt \,.
\end{equation}
A \emph{$\Gamma$-marked relative stable map} is a relative stable map to $(X,D)$ with the data of a
$\Gamma$-marking on its domain curve. \hspace*{\fill} $\Diamond$

\end{definition}  
\vspace{8pt}

Integration over $[\oM_\Gamma(X,D)]^\virt$, however, does not give a good notion of nodal relative Gromov--Witten invariants, which for example should appear in a degeneration formula for nodal absolute Gromov--Witten invariants. This will become clear in our discussion of the degeneration formula in \S \ref{section_degeneration}. To obtain a good notion of nodal relative Gromov--Witten invariants, we add the extra condition that the nodes of the domain curve imposed by $\Gamma$ remain away from the singular locus of the expanded targets. We show below that the extra condition on nodes defines a moduli stack $\shP_\Gamma(X,D)$ which is a union of connected components of $\oM_\Gamma(X,D)$. Nodal relative Gromov--Witten invariants are then defined in what follows by integration over the virtual class 
$[\shP_\Gamma(X,D)]^\virt$ obtained by restriction of $[\oM_\Gamma(X,D)]^\virt$ to $\shP_\Gamma(X,D)$.

Let $f \colon C \rightarrow \mathcal{X}_S$ be a $\Gamma$-marked relative 
stable map over a scheme $S$, where $\mathcal{X}_S$ is as in Definition \ref{def_relative_map}.
By Definition \ref{def:gamma_marking} of the $\Gamma$-marking, for every edge $e$ of $\Gamma$, we have a section $\sigma_e$ of $C \rightarrow S$ with image in the singular locus of $C$. In other words, for every point $s \in S$, the point $(\sigma_e)_s$ is a node of the curve $C_s$, and we can then ask if this node is distinguished or not in the sense of Definition \ref{def_relative_map}, see Figure \ref{Fig: distinguish} for an illustration of distinguished and  non-distinguished nodes. The following result is essential for our paper.

\begin{lemma} \label{lem_key}
Let $S$ be a connected scheme, and let $f \colon C \rightarrow \mathcal{X}_S$ be a 
$\Gamma$-marked relative stable map over $S$. Let $e$ be an edge of $\Gamma$ and $\sigma_e$ the corresponding section of $C \rightarrow S$ given by the $\Gamma$-marking. Then, we have the following alternative:
\begin{itemize}
    \item[(i)] either the node $(\sigma_e)_s$ of $C_s$ is distinguished for every point $s \in S$, or
    \item[(ii)] the node $(\sigma_e)_s$ of $C_s$ is distinguished for no point $s \in S$.
\end{itemize}
\end{lemma}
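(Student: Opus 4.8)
The plan is to show that the property ``the node $(\sigma_e)_s$ is distinguished'' is both open and closed on the connected base $S$, so that it is either everywhere true or everywhere false. For this I would work \'etale-locally on $C$ near the section $\sigma_e$ and smooth-locally on $\mathcal{X}_S$ near the image, using the local normal form for prestable curves and expanded degenerations. Near a point of $\sigma_e$, the curve $C \to S$ looks like $\Spec A[x,y]/(xy-t) \to \Spec A$ for $A = \cO_{S,s}$ and some $t \in A$ (the node persisting, since $\sigma_e$ is a section into the singular locus, forces $t$ to be a zero divisor / nilpotent-type condition; more precisely the section being into $\Sing(C/S)$ means the local equation of the node is $xy$, i.e.\ $t=0$ along $\sigma_e$). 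Similarly the total space $\mathcal{X}_S$ near the relevant stratum is either smooth over $S$ (when the target is not locally a product of $\PP^1$-bundles there) or \'etale-locally of the form $\Spec A[u,v]/(uv-w) \to \Spec A$. The node $(\sigma_e)_s$ is distinguished precisely when $f$ maps it into the singular locus of the expanded target, i.e.\ when the image lands in the locus $\{u=v=0\}$ of such a chart.

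\medskip

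The key observation is that whether $f(\sigma_e)$ meets the singular locus of $\mathcal{X}_S$ is detected by the image ideal: the singular locus $\mathrm{Sing}(\mathcal{X}_S/S) \subset \mathcal{X}_S$ is a closed substack, so its preimage $f^{-1}(\mathrm{Sing}(\mathcal{X}_S/S))$ is closed in $C$, and its intersection with the section $\sigma_e(S) \cong S$ is a closed subscheme $S' \subseteq S$. By construction, $s \in S'$ if and only if $f(\sigma_e)_s \in \mathrm{Sing}(\mathcal{X}_{S,s})$. Since the node $(\sigma_e)_s$ is already a node for every $s$ (it lies on the section into $\Sing(C/S)$), this lands us exactly in case (iii) of Definition \ref{def_relative_map}, so $S'$ is precisely the locus where $(\sigma_e)_s$ is distinguished. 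This already shows the distinguished locus is \emph{closed} in $S$. For the reverse inclusion --- that it is also \emph{open} --- one uses the predeformability condition: at a distinguished node, étale-locally $f$ has the form $x \mapsto u = x^n$, $y \mapsto v = y^n$, $t \mapsto w = t^n$ with $n \geq 1$, and in particular the image of the node is forced into $\{u = v = 0\}$ in a way that is stable under the deformations parametrized by a versal family. Concretely, in a neighborhood where $f$ is predeformable along $\sigma_e$, the relation $w = t^n$ with $t$ the local smoothing parameter of the node shows that the node stays mapped to the singular locus over the whole neighborhood: if the target bubble persists (i.e.\ $w$ is not a unit), the node remains distinguished; and the locus where the bubble degenerates away is exactly the locus $\{w \text{ unit}\}$, which is \emph{not} in $S'$ --- but near a point of $S'$ we have $w$ in the maximal ideal, so $w$ stays a non-unit in a neighborhood, hence $S'$ contains an open neighborhood of each of its points.

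\medskip

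I expect the main obstacle to be handling the predeformability condition carefully enough to get openness. The subtlety is that a priori a node could be distinguished at $s$ but, as one deforms, the target expansion could ``contract'' so that the image of the node moves out of the singular locus while the node of the domain curve persists --- one must rule this out. The resolution is that in Jun Li's moduli stack the combinatorial type of the expanded target (the number of bubbles, and which nodes of $C$ map to which singular strata) is itself constant on connected components: the stack $\mathcal{T}$ of expanded degenerations stratifies by the order $l$ of expansion, and the condition that a given node $\sigma_e$ maps into a given singular stratum of $\mathcal{X}[l]$ is an open-and-closed condition on the locus where the combinatorial type is fixed. So the cleanest route is: (1) show the function $s \mapsto (\text{combinatorial type of }f_s)$ is locally constant on $S$ --- this is standard for relative stable maps and follows from the deformation theory in \cite{li2001stable} together with the fact that $\oM_{\overline{\Gamma}}(X,D)$ decomposes into open-and-closed pieces by combinatorial type; (2) observe that within a fixed combinatorial type, whether $\sigma_e$ is distinguished is determined by the combinatorial type (it records exactly which domain nodes are distinguished); (3) conclude that the distinguished-ness of $(\sigma_e)_s$ is locally constant on $S$, hence constant since $S$ is connected. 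This gives the dichotomy. The delicate verification in (1) is that the section $\sigma_e$ varies continuously enough that its image in the combinatorial data is locally constant --- but this is immediate since $\sigma_e$ is a genuine section of $C \to S$ built from the $\Gamma$-marking, not something that can jump.
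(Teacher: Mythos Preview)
Your overall strategy---show the distinguished locus $Z \subset S$ is both open and closed---is exactly the paper's approach, and your closedness argument (pull back $\mathrm{Sing}(\mathcal{X}_S/S)$ along $f \circ \sigma_e$) is fine. The openness argument in your second paragraph also has the right core idea, the predeformability local form, but it should be stated more directly. You already noted that $\sigma_e$ being a section into $\mathrm{Sing}(C/S)$ forces $t = 0$ identically and $x = y = 0$ along $\sigma_e$ in the local chart $\Spec A[x,y]/(xy-t)$. From the predeformability form $u = x^n$, $v = y^n$, $w = t^n$, this immediately gives $w = 0$ and $u = v = 0$ along $f \circ \sigma_e$, so $f(\sigma_e(s'))$ lies in the singular locus for every $s'$ in the chart. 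That is the whole argument; your detour through ``$w$ a non-unit in a neighborhood'' is weaker than what you actually have and is not needed.

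Your third paragraph, however, contains a genuine error and should be discarded. The claim that the combinatorial type of $f_s$ (in particular the number of bubbles in the expanded target) is locally constant on $S$ is \emph{false}. The entire point of Jun Li's construction is that the order of expansion can jump: over a generic point the target may be unexpanded, while over a special point it acquires bubbles. The stratification of $\mathcal{T}$ by expansion order is by locally closed, not open-and-closed, substacks. So your proposed ``cleanest route'' via step (1) does not work. The correct mechanism is not that the combinatorial type is rigid, but that the predeformability condition, together with the persistence of the $\Gamma$-marked node, forces the relevant piece of the singular locus of the target to persist as well---exactly the computation in your second paragraph.
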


\begin{proof}
We prove that the subset $Z \subset S$ of points $s \in S$ such that $(\sigma_e)_s$ is a distinguished node of $C_s$ is closed and open in the complex analytic topology.

First of all, $Z$ is closed. Indeed, its complement $S \setminus Z$ is open: if 
$s \in S \setminus Z$
$f((\sigma_e)_s)$ is a smooth point of $\mathcal{X}_{S,s}$, then, as the morphism $\mathcal{X}_S \rightarrow S$
is flat and locally of finite presentation, it is smooth in restriction to an open subset of 
$\mathcal{X}_S$ containing $(\sigma_e)_s$, 
see for example
\cite[\href{https://stacks.math.columbia.edu/tag/01V9}{Tag 01V9}]{stacks-project}.
So $\sigma_e^{-1}(U)$ is an open subset of $S$ (by continuity of $\sigma_e$), containing $s$ and contained in $S\setminus Z$.

It remains to show that $Z$ is open. Let $s \in Z$, so that $(\sigma_e)_s$ is a distinguished node of $C_s$ so $f((\sigma_e)_s)$ is a singular point of $\mathcal{X}_{S,s}$. By the predeformability condition in Definition 
\ref{def_relative_map}(iii), 
locally on $C$ for the complex analytic topology, and smooth-locally on $\mathcal{X}_S$, the morphism $f$ admits the following
form: \[ \Spec A[x,y]/(xy-t) \rightarrow \Spec A[u,v]/(uv-w) \]
over $\Spec A$, for some $t,w \in A$, in which, $w = t^n$, $u \mapsto x^n$, and 
$v \mapsto y^n$ for some $n \in \Z_{\geq 1}$. As the image of $\sigma_e$ is contained in the singular locus of $C$ (so $\sigma_e$ is a section of nodes), we deduce that $t=0$ and 
$x((\sigma_e)_{s'})=y((\sigma_e)_{s'})=0$ for every $s' \in \Spec A$. From the form of $f$, we deduce that $w=0$ and $u(f((\sigma_e)_{s'})= v(f((\sigma_e)_{s'})=0$ for every $s' \in \Spec A$, and so that $f((\sigma_e)_{s'})$ is contained in the singular locus of $\mathcal{X}_{S,s'}$
for every $s' \in \Spec A$. We conclude that $Z$ is open.
\end{proof}

\begin{figure}
\resizebox{1.0\linewidth}{!}{\input{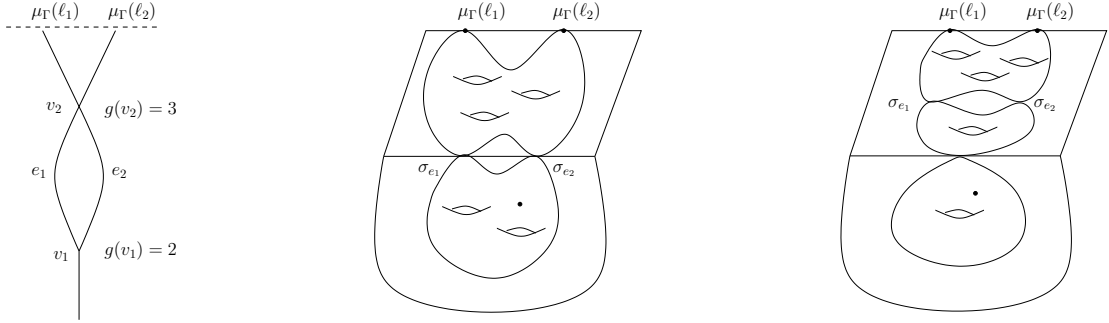}}
\caption{The left figure is an $(X,D)$-valued stable graph $\Gamma$ with two relative legs $\ell_1,\ell_2$ of multiplicities $\mu_\Gamma(\ell_1)$ and $\mu_\Gamma(\ell_2)$ respectively. In the middle and on the right, there are two $\Gamma$-marked relative stable maps. While in the middle the two nodes $\sigma_{e_1},\sigma_{e_2}$ corresponding to the edges $e_1$ and $e_2$  are distinguished, on the right they are not distinguished.}
\label{Fig: distinguish}
\end{figure}

\begin{definition}
\label{def_relative}
For every $(X,D)$-valued stable graph $\Gamma$, we define
$\shP_\Gamma(X,D)$ as the substack of $\oM_\Gamma(X,D)$ whose $S$-points
for every scheme $S$ are $\Gamma$-marked relative stable maps 
$f \colon C \rightarrow \shX_S$ such that for every edge $e$ of $\Gamma$ and every point $s \in S$, 
the node $(\sigma_e)_s$ of $C_s$ marked by $e$ is \emph{not} distinguished. \hspace*{\fill} $\Diamond$
\end{definition}  
\vspace{8pt}

By Lemma \ref{lem_key}, 
$\shP_\Gamma(X,D)$ is a union of connected components of 
$\oM_\Gamma(X,D)$ and is therefore well-defined and proper. 
We denote by 
$\shN_\Gamma(X,D)$ the complement of $\shP_\Gamma(X,D)$ in $\oM_\Gamma(X,D)$, which is also a union of connected components of $\oM_\Gamma(X,D)$ and also proper. In other words, we have a disjoint union decomposition 
\begin{equation} \label{eq_decomposition}
\oM_\Gamma(X,D) = \shP_\Gamma(X,D) \sqcup \shN_\Gamma(X,D) \,.\end{equation}
An explicit and non-trivial example of the decomposition \eqref{eq_decomposition}
is given at the end of Appendix \ref{sec_appendix}.

\begin{definition}
\label{Def: virtual classes}
We define the virtual classes 
\begin{equation}
\nonumber
    [\shP_\Gamma(X,D)]^\virt \in H_*(\shP_\Gamma(X,D)) \,\ \mathrm{and} \,\ [\shN_\Gamma(X,D)]^\virt \in H_*(\shN_\Gamma(X,D))
\end{equation}
as the classes obtained by restriction of the virtual class 
$[\oM_\Gamma(X,D)]^\virt$ to $\shP_\Gamma(X,D)$ and $\shN_\Gamma(X,D)$ respectively.
\hspace*{\fill} $\Diamond$
\end{definition}
\vspace{8pt}

Nodal relative Gromov--Witten invariants are then defined by integration over the virtual class
$[\shP_\Gamma(X,D)]^\virt$.

\begin{definition} \label{def_nodal_relative_gw}
Given an $(X,D)$-valued stable graph $\Gamma$,
the nodal relative Gromov--Witten invariants of $(X,D)$ of type 
$\Gamma$ are 
\begin{multline} \label{eq_nodal_relative_gw}
   \left\langle \prod_{i=1}^{n_{\Gamma,I}} \tau_{k_i}(\alpha_i) \prod_{h\in H_\Gamma \setminus L_\Gamma} \tau_{k_h}
   \bigg| \prod_{j=1}^{n_{\Gamma,D}} \delta_j \right\rangle_\Gamma^{(X,D)} :=\\
    \int_{[\shP_\Gamma(X,D)]^\virt} \prod_{i=1}^{n_{\Gamma,I}} \psi_i^{k_i}\,\ev_i^{*}(\alpha_i) \prod_{h\in H_\Gamma \setminus L_\Gamma} \psi_h^{k_h}
    \prod_{j=1}^{n_{\Gamma,D}} \ev_j^{*}(\delta_j)\,,
\end{multline}
where:
\begin{itemize}
\item[(i)]
for every $1 \leq i \leq n_{\Gamma,I}$, $\ev_i$ is the evaluation morphism at the $i$-th interior leg of $\Gamma$, 
$k_i$ is a nonnegative integer, $\alpha_i \in H^{\star}(X)$, 
and $$\psi_i :=c_1(\LL_i)\, ,$$ 
where $\LL_i$ is the cotangent line bundle over $\oM_\Gamma(X)$
associated to the $i$-th marked point of the domain curve,
\item[(ii)]for every half-edge $h$ of $\Gamma$ which is not a leg, adjacent to a vertex $v$, $k_h$ is a nonnegative integer, and 
$$\psi_h :=c_1(\LL_h)\, , $$ where $\LL_h$ is the cotangent line bundle over 
$\oM_\Gamma(X)$ associated to marked point of the curve $C_v$ corresponding to $h$, and;
\item[(iii)] for every $1 \leq j \leq n_{\Gamma,D}$, 
$\ev_j$ is the evaluation morphism at the $j$-th relative leg of $\Gamma$, and
$\delta_j \in H^{\star}(D)$. \hspace*{\fill} $\Diamond$
\end{itemize}
\end{definition}  
\vspace{8pt}

As \eqref{eq_nodal_relative_gw} is a linear function of each $\alpha_i$ and $\delta_j$, we may assume without loss of generality that the $\alpha_i$'s
are elements of a fixed basis of $H^\star(X)$ and the $\delta_j$'s are elements of a fixed basis of $H^{*}(D)$.

When $\Gamma$ has no edges, nodal relative  Gromov--Witten invariants of type 
$\Gamma$ are just ordinary (possibly disconnected) relative Gromov--Witten invariants.
A \emph{nodal relative Gromov--Witten invariant of $(X,D)$} is a nodal relative Gromov--Witten invariant of $(X,D)$ of type $\Gamma$ for some $(X,D)$-valued stable graph $\Gamma$.

In Definition \ref{def_nodal_relative_gw},
 we do not include insertions of $\psi$-classes at the relative markings
 for nodal relative Gromov--Witten invariants.
The reason is that we restrict our study here  to the relative invariants which appear in the degeneration formula for absolute invariants.

\section{Degeneration formula for nodal Gromov--Witten theory}
\label{section_degeneration}

After reviewing the standard degeneration formula in relative
Gromov-Witten theory in \S \ref{section_degeneration_review}, we state a new nodal degeneration formula in \S \ref{section_degeneration_statement}. The proof of the nodal degeneration formula is given in \S \ref{section_degeneration_technical}\,-\,\ref{section_degeneration_proof}.
The nodal degeneration formula follows from the standard degeneration formula
after intersection with the nodal locus in the Artin stack of curves.
The main subtlety in the argument concerns the location of
the node (and requires the vanishing of certain contributions to
the intersection product).

\subsection{Jun Li's degeneration formula}
\label{section_degeneration_review}
Let $W \rightarrow B$ be a flat projective morphism from a smooth variety $W$ to a smooth connected curve $B$ with  a distinguished point $0 \in B$
such that
\begin{enumerate}
\item[(i)] the fibers $W_t$ over $t \in B \setminus \{0\}$ are smooth varieties, 
\item[(ii)] the fiber
$W_0$ over $0\in B$ is the union of two smooth irreducible components $Y_1$ and $Y_2$ glued along
 a smooth connected divisor $D$.
\end{enumerate}
We write the degeneration as
$$W_t \rightsquigarrow W_0=Y_1\cup_D Y_2\,. $$
The inclusions 
$W_t, Y_1, Y_2 \subset W$ induce maps $$p_{W_t} \colon H_2^+(W_t)\rightarrow H_2^+(W)\, ,\ p_{Y_1}: H_2^+(Y_1) \rightarrow H_2^+(W)\, ,\   
p_{Y_2}: H_2^+(Y_2) \rightarrow H_2^+(W)\, .$$

Jun Li's degeneration formula \cite{JunLi}
(motivated by earlier work in symplectic geometry by \cite{IonelParker, LiRuan}) relates the Gromov--Witten theory of the general fibers
$W_t$ for $t \neq 0$ to the relative Gromov--Witten theories of the 
pairs $(Y_1,D)$ and $(Y_2,D)$ given by the components of the special fiber relative to their common intersection. We review below the degeneration formula, first at the level of virtual cycles, and then at the level of numerical invariants.

We first recall the notion of an {\em expanded degeneration} of the special fiber $W_0$, which is parallel to the notion of expanded degeneration of a pair $(X,D)$ reviewed in \S \ref{subsec: Moduli spaces of relative stable maps}.
Let $\bP$ be the $\PP^1$-bundle over $D$ defined by 
$$\bP:= \PP(N_{D|Y_1} \oplus \cO_D)\, $$ where 
$N_{D|Y_1}$ is the normal line bundle to $D$ in $Y_1$.
The normal line bundle to $D$ in $Y_2$
satisfies
$$N_{D|Y_2}\stackrel{\sim}{=}N_{D|Y_1}^\vee\, .$$
The bundle $\bP$ has two natural disjoint sections: one with normal bundle 
$N_{D|Y_1}^\vee$, called the zero section, and another with normal bundle 
$N_{D|Y_1}$, called the infinity section.
For every $l \in \Z_{\geq 0}$, let $\bP_l$ be the variety obtained by gluing together $l$ copies of $\bP$, where the infinity section of the $i$th
component is glued to the zero section of the $(i + 1)$st for all 
$1 \leq i\leq l-1$. The \emph{$l$-step expanded degeneration} of the special fiber $W_0$ along $D$ is the variety $W_0[l]$ obtained by gluing $Y_1$ along $D$ to $\bP_l$ along the zero section of the first copy of $\bP$, and gluing $Y_2$ along $D$ to $\bP_l$ along the infinity section of the last copy of 
$\bP$.

In \cite{li2001stable}, Jun Li constructs a moduli stack 
$\mathfrak{T} \rightarrow B$ with a universal family $\mathcal{W} \rightarrow \mathfrak{T}$ such that for every section $\sigma \colon B \rightarrow \mathfrak{T}$, the pull-back family
$\mathcal{W}_\sigma \rightarrow B$ is isomorphic to $W \rightarrow B$ away from $0 \in B$, and has a special fiber over $0 \in B$ isomorphic to $W_0[l]$ for some $l$.

The definition
of a stable map to a degeneration \cite{li2001stable,JunLi} is parallel
to Definition \ref{def_relative_map}
of a relative stable map to a pair $(X,D)$.

\begin{definition} \label{def_deg}
Let $S \rightarrow B$ be a scheme over $B$.
A \emph{stable map over $S$ to the degeneration 
$W \rightarrow B$}  is   
a commutative diagram 
\[\begin{tikzcd}
C 
\arrow[r]
\arrow[d]
&
\mathcal{W}
\arrow[d]\\
S
\arrow[r] \arrow[dr]& \mathfrak{T} \arrow[d] \\
& B
\,,
\end{tikzcd}\]
where $C \rightarrow S$ is a prestable curve,
$\mathcal{W} \rightarrow \mathfrak{T}$ is the universal family of expanded degenerations of 
$W \rightarrow B$, and such that the following conditions are satisfied:
\begin{itemize}
    \item[(i)] Let $f \colon C \rightarrow \mathcal{W}_S:= \mathcal{W}\times_{\mathfrak{T}} S$ be the induced $S$-morphism. 
    Then, for every geometric fiber $$f_s \colon C_s \rightarrow \mathcal{W}_{S,s}$$ over a point $s \in S$, no irreducible component of $C_s$ is entirely mapped by $f_s$ into the singular locus of $\mathcal{W}_{S,s}$.
    \item[(ii)] Every geometric fiber $f_s \colon C_s \rightarrow \mathcal{W}_{S,s}$ over $s \in S$ is \emph{stable} in the sense that there are finitely many pairs $(r_1,r_2)$, such that $r_1$ is an automorphism of $C_s$, $r_2$ is an automorphism of 
    $\mathcal{W}_{S,s}$ fixing $W_t$, where $t\in B$  is the image of $s \in S$ by $S \rightarrow B$, and $f_s \circ r_1=r_2 \circ f_s$.
    \item[(iii)] For every $s \in S$ and $p \in C_s$ such that $f_s(p)$ is contained in the singular locus of $\mathcal{X}_{S,s}$, $f$ is 
    \emph{predeformable} at $p$ as in Definition \ref{def_relative_map}(iii).
\end{itemize}

The nodes of the domain curve of a relative stable map which 
map to the singular locus of the expanded target (as described in (iii)) are called \emph{distinguished} nodes. \hspace*{\fill} $\Diamond$
\end{definition}  
\vspace{8pt}

For every $W$-valued stable graph $G$ without edges, let 
\[ \oM_G(W/B) \rightarrow B\] 
be the moduli stack of $G$-marked stable maps to the degeneration $W \rightarrow B$.
Connected components $C_v$ of the domain curve of a $G$-marked stable map are indexed by 
the vertices $v$ of $G$, and a $G$-marked stable map restricted to $C_v$ is $\n(v)$-pointed, of genus $\g(v)$ and class $\beta(v)$.
We assume further that $G$ is \emph{vertical}, in the sense that for every vertex $v$ of $G$, the push-forward to $B$ of the curve class $\beta(v)$ is zero. 

For every $t \in B$, denote by 
$\iota_{t}$ the inclusion of the point $t$ in the curve $B$, and by 
$\oM_G(W_t)$ the fiber of $\oM_G(W)$ over $t$.
Given a $W_t$-valued stable graph $G_t$, we denote by $p_{W_t,*}G_t$ the $W$-valued stable graph obtained from $G_t$ by replacing all the curve classes 
$\beta(v) \in H_2^+(W_t)$ by $p_{W_t,*}\beta(v) \in H_2^+(W)$
(if $\beta(v) \neq 0$, then $\beta(v)$ has a non-zero degree with respect to a relative polarization of $W \rightarrow B$, and so $p_{W_t,*}\beta(v) \neq 0$ and the graph $p_{W_t,*}G_t$ is indeed stable).
For $t \in B\setminus \{0\}$, stable maps to the degeneration $W \rightarrow B$
with image contained in $W_t$ are ordinary stable maps and so
\begin{equation} \label{eq_decomp_moduli}
\oM_G(W_t)=\bigsqcup_{\substack{G_t\\ p_{W_t,*}G_t=G}} \oM_{G_t}(W_t)\,,\end{equation}
where the disjoint union is over the $W_t$-valued stable graphs 
$G_t$ satisfying the condition $p_{W_t,*}G_t=G$, and $\oM_{G_t}(W_t)$ is the moduli stack of
$G_t$-marked
stable maps to $W_t$. There are finitely many such graphs $G_t$ as the degrees of the curve classes with respect to a relative polarization of 
$W \rightarrow B$ are fixed by $G$.

Jun Li constructs in \cite{li2001stable, JunLi} a virtual class $[\oM_G(W/B)]^{\virt}$
on $\oM_G(W/B)$ such that, for every $t \in B \setminus \{0\}$, 
\begin{equation} \label{eq_decomp_classes}
\iota_{t}^! [\oM_G(W/B)]^{\virt} =\sum_{\substack{G_t\\ 
p_{W_t,*}G_t=G}} \iota_{G_t,*}[\oM_{G_t}(W_t)]^{\virt} \,,\end{equation}
where $\iota_{G_t}$ is the inclusion of $\oM_{G_t}(W_t)$ in $\oM_G(W_t)$
given by \eqref{eq_decomp_moduli}.
The degeneration formula expresses the virtual class 
\[ \iota_0^! [\oM_G(W/B)]^\virt \]
on $\oM_G(W_0)$ in terms of the virtual classes in relative Gromov--Witten theory of 
$(Y_1,D)$ and $(Y_2,D)$. To state the formula, we first introduce some terminology about splittings.

\begin{definition} \label{def_splitting_gamma}
Let $\Gamma$ be a $W$-valued stable graph.
A \emph{splitting} $\sigma$ of $\Gamma$ is an ordered pair $(\gamma_1, \gamma_2)$ where 
$\gamma_1$ is a $(Y_1,D)$-valued stable graph and $\gamma_2$ is a $(Y_2,D)$-valued stable graph. Moreover, we require $(\gamma_1, \gamma_2)$ to satisfy the following conditions:
\begin{itemize}
    \item[(i)] $\gamma_1$ and $\gamma_2$ have the same number $\ell(\sigma)$ of relative legs,
    \item[(ii)]For each relative
    leg  $1 \leq i \leq \ell(\sigma)$, the relative multiplicity $\mu_{\gamma_1}(i)$ attached to the $i$-th leg of $\gamma_1$ is equal to the relative multiplicity 
    $\mu_{\gamma_2}(i)$ attached to the $i$-th leg of $\gamma_2$.
    \item[(iii)] The labelling of the interior legs of $\gamma_1$ and $\gamma_2$ forms a partition of 
    the labelling of the legs of $\Gamma$.
\end{itemize}
Furthermore, a splitting carries the extra data of an isomorphism between $\Gamma$ and the graph obtained by first gluing for all $1 \leq i \leq \ell(\sigma)$ the 
    $i$-th relative leg of $p_{Y_1,*}\gamma_1$ with the $i$-th relative leg of $p_{Y_2,*}\gamma_2$, and then contracting the 
    $\ell(\sigma)$ newly created edges. Here 
    $p_{Y_i,*}\gamma_i$ is the graph obtained from $\gamma_i$ by replacing all the curve classes 
    $\beta(v) \in H_2^+(Y_i)$ by $p_{Y_i,*}\beta(v) \in H_2^+(W)$.
    
Two splittings $(\gamma_1,\gamma_2)$ and $(\gamma_1',\gamma_2')$ are isomorphic if there exist
isomorphisms $\gamma_1 \simeq \gamma_1'$ and $\gamma_2 \simeq \gamma_2'$ compatible 
with the data of the isomorphisms between $\Gamma$ and the glued contracted graphs.
We denote by $\Omega_\Gamma$ the set of isomorphism classes of splittings of $\Gamma$. 
We say that two splittings $\sigma_1$ and $\sigma_2$ are equivalent if they differ by a permutation of the labelling of the relative legs. We denote by $\overline{\Omega}_{\Gamma}$ the set of equivalence classes of splittings of $\Gamma$. For every $\sigma \in \overline{\Omega}_{\Gamma}$, we denote by $|\Aut(\sigma)|$ the order of the group of permutations of the labelling of relative legs fixing a splitting representative of the class $\sigma$, and by
\begin{equation}
    \label{Eq: multiplicity factor}
m(\sigma):=\prod_{i=1}^{\ell(\sigma)} \mu_{\gamma_1}(i)
=\prod_{i=1}^{\ell(\sigma)} \mu_{\gamma_2}(i)    
\end{equation}
the product of the 
$\ell(\sigma)$  relative multiplicities. We refer to $m(\sigma)$ as the \emph{multiplicity factor}.
\hspace*{\fill} $\Diamond$
\end{definition}  
\vspace{8pt}

Let $\eta=(G_1,G_2) \in \overline{\Omega}_G$ be an equivalence class of splittings of $G$, as in Definition \ref{def_splitting_gamma}. As $G$ does not have any edges, the same is true for $G_1$ and $G_2$.
The $(Y_1,D)$-valued and $(Y_2,D)$-valued stable graphs $G_1$ and $G_2$ define moduli stacks 
$\oM_{G_1}(Y_1,D)$ and $\oM_{G_2}(Y_2,D)$ of relative stable maps to $(Y_1,D)$ and $(Y_2,D)$ respectively, as
\eqref{eq_relative_moduli}. 
The evaluation at the relative marked points corresponding to the relative legs of $G_1$ and $G_2$ defines a morphism
\[ \oM_{G_1}(Y_1,D) \times \oM_{G_2}(Y_2,D) \rightarrow (D \times D)^{\ell(\eta)} \,.\]
We denote by $\Delta_\eta$ the diagonal morphism 
\[ \Delta_\eta \colon  D^{\ell(\eta)} \rightarrow (D \times D)^{\ell(\eta)}\,,\]
and we form the fiber product
\[\begin{tikzcd}
\oM_{G_1}(Y_1,D) \times_{D^{\ell(\eta)}} \oM_{G_2}(Y_2,D)  
\arrow[r]
\arrow[d]
&
\oM_{G_1}(Y_1,D) \times \oM_{G_2}(Y_2,D) \arrow[d]\\
D^{\ell(\eta)}
\arrow[r,"\Delta_\eta"]& (D \times D)^{\ell(\eta)}\,.
\end{tikzcd}\]
For every scheme $S$, an $S$-point of the above fiber product is the data of a stable map over $S$ to an expansion 
$Y_1[l_1]$ of $Y_1$ along $D$ and a stable map over $S$ to an expansion $Y_2[l_2]$ of $Y_2$
along $D$, such that the images of the relative marked points on $$D[l_1]=D[l_2] \simeq D$$
match. 
As the contact orders also match, one can glue these two stable maps together to obtain a stable map over $S$ to the expansion $W_0[l_1+l_2]$ of $W_0$. In other words, we have a gluing morphism
\begin{equation} \label{eq_gluing_morphism_1}
\Phi_\eta \colon 
\oM_{G_1}(Y_1,D) \times_{D^{\ell(\eta)}} \oM_{G_2}(Y_2,D)  
\rightarrow \oM_G(W_0) \,.\end{equation}
We can finally state Jun Li's degeneration formula at the cycle-level \cite{JunLi}:
\begin{equation}  \label{eq_li_deg_cycle}
\iota_0^! [\oM_G(W/B)]^\virt 
= \sum_{\eta=(G_1,G_2)\in  
\overline{\Omega}_G} 
\frac{m(\eta)}{|\Aut(\eta)|} \Phi_{\eta,*} \Delta_\eta^! 
([\oM_{G_1}(Y_1,D)]^\virt \times [\oM_{G_2}(Y_2,D)]^\virt)\,.\end{equation}
Combining \eqref{eq_decomp_classes} with \eqref{eq_li_deg_cycle} and using the Künneth decomposition of the class of the diagonal $\Delta_\eta$, we obtain the numerical version of the degeneration formula: for every 
$k_i\in \Z_{\geq 0}$ and cohomology classes $\alpha_i \in H^{*}(W)$ indexed by the legs $i \in L_G$ of $G$, we have for every $t \in B \setminus \{0\}$:
\begin{align}
     \label{eq_li_deg_num}
     \nonumber
    \sum_{\substack{G_t\\p_{W_t,*}G_t=G}} \left\langle \prod_{i \in L_G} \tau_{k_i}(\alpha_i) \right\rangle_{G_t}^{W_t}= \,\  \,\ \,\ \,\ &  \nonumber
  \\  \sum_{\eta=(G_1,G_2)\in  
\overline{\Omega}_G} 
\frac{m(\eta)}{|\Aut(\eta)|}
\sum_{j_1,\dots,j_{\ell(\eta)}} 
(-1)^\epsilon  & 
\left\langle \prod_{i \in L_{G_1,I}}\tau_{k_i}(\alpha_i)\big|\prod_{i=1}^{\ell(\eta)} 
\delta_{j_i} \right\rangle_{G_1}^{(Y_1,D)} \nonumber
\\ & \,\  \,\ \,\ \,\ \,\  \,\ \,\ \,\ \boldsymbol{\cdot} \left\langle \prod_{i=1}^{\ell(\eta)} 
\delta_{j_i}^\vee \, \Big| \prod_{i \in L_{G_2,I}}\tau_{k_i}(\alpha_i)\right\rangle_{G_2}^{(Y_2,D)}\,,
\nonumber
\end{align}
where by abuse of notation we still denote by $\alpha_i$ the restrictions of 
$\alpha_i$ to $W_t$, $Y_1$, and $Y_2$ respectively,
$(\delta_j)_j$ is a basis of $H^{\star}(D)$, 
$(\delta_j^\vee)_j$ is the Poincaré-dual basis, and 
$(-1)^\epsilon$ is the sign determined formally by the equality 
\[ \prod_{i\in L_\G} \alpha_i =(-1)^\epsilon 
\prod_{i\in L_{G_1,I}} \alpha_i \prod_{i\in L_{G_2,I}} \alpha_i \,.\]

\subsection{Statement of the nodal degeneration formula}
\label{section_degeneration_statement}

Let $\Gamma$ be a $W$-valued stable graph and 
$\overline{\Gamma}$ the $W$-valued stable graph obtained from $\Gamma$ by contracting all edges, as in 
Definition \ref{def_contraction}.
We assume further that $\Gamma$ is \emph{vertical}, in the sense that for every vertex $v$ of $\Gamma$, the push-forward to $B$ of the curve class $\beta(v)$ is zero. 
We define the moduli stack $\oM_\Gamma(W/B)$ of
\emph{$\Gamma$-marked stable maps to the degeneration $W \rightarrow B$} by the fiber diagram
\[\begin{tikzcd}
\oM_\Gamma(W/B)  
\arrow[r]
\arrow[d]
&
\oM_{\overline{\Gamma}}(W/B)
\arrow[d]\\
\ooMM_\Gamma \times B
\arrow[r,"\iota_\Gamma"]& \ooMM_{\overline{\Gamma}} \times B\,.
\end{tikzcd}\]
In other words, a $\Gamma$-marked stable curve is a stable map with the data of a
$\Gamma$-marking on its domain curve. Using the predeformability condition 
(iii) in the Definition \ref{def_deg}
of a stable map to the degeneration $W \rightarrow B$, we obtain a version of Lemma \ref{lem_key}, ensuring that we have a disjoint union decomposition
\[ \oM_\Gamma(W/B) = \shP_\Gamma(W/B) \sqcup \shN_\Gamma(W/B) \,,\]
where
$\shP_\Gamma(W/B) \rightarrow B$ is the substack of $\oM_\Gamma(W/B) \rightarrow B$ whose $S$-points are $\Gamma$-marked stable maps to the degeneration 
$f \colon C \rightarrow \mathcal{W}_S$ such that for every edge $e$ of $\Gamma$ and every point $s \in S$, 
the node $(\sigma_e)_s$ of $C_s$ marked by $e$ is \emph{not} distinguished
(the image lies away from the singular locus of the target $\mathcal{W}_{S,s}$).
The complement substack 
$\shN_\Gamma(W/B) \rightarrow B$ parameterizes stable maps having at least one 
$\Gamma$-marked node mapped to the singular locus of the target.
As the target remains unexpanded over $B \setminus \{0\}$, the stack
$\shN_\Gamma(W/B)$ is entirely over $0 \in B$. 

We define a virtual class on $\oM_\Gamma(W/B)$ by 
\begin{equation} \label{eq_virt_class_W_B}
[\oM_\Gamma(W/B)]^\virt := \iota_\Gamma^! [\oM_{\overline{\Gamma}}(W/B)]^{\virt}\,,
\end{equation}
and virtual classes $[\shP_\Gamma(W/B)]^{\virt}$ and $[\shN_\Gamma(W/B)]^{\virt}$
by restriction to $\shP_\Gamma(W/B)$ and $\shN_\Gamma(W/B)$ respectively.

For every $t \in B \setminus \{0\}$, the fiber $\oM_\Gamma(W_t)$  of $\oM_\Gamma(W/B) \rightarrow B$ over $t$ is given as in 
\eqref{eq_decomp_moduli} by
\begin{equation} \label{eq_decomp_moduli_nodal}
    \oM_\Gamma(W_t)=\bigsqcup_{\substack{\Gamma_t\\ p_{W_t,*}\Gamma_t=\Gamma}} \oM_{\Gamma_t}(W_t)\,,\end{equation}
where the disjoint union is over the $W_t$-valued stable graphs 
$\Gamma_t$ satisfying the condition $p_{W_t,*}\Gamma_t=\Gamma$, and $\oM_{\Gamma_t}(W_t)$ are moduli stacks of
nodal stable maps to $W_t$.

By \cite[Theorem 6.4]{Fulton} applied to the fiber diagram
\begin{equation}\label{eq_com_1}
\begin{tikzcd}[column sep=1em]
\oM_\Gamma(W_t) \arrow[r] \arrow[d] &
\oM_\Gamma(W/B)
 \arrow[r]\arrow[d] &
\ooMM_\Gamma \arrow[d, "\iota_\Gamma"]
 \\
\oM_{\overline{\Gamma}}(W_t)
\arrow[r] \arrow[d] &
 \oM_{\overline{\Gamma}}(W/B)
\arrow[r] \arrow[d] & 
\ooMM_{\overline{\Gamma}}\\
t  \arrow[r,"\iota_{t}"]& B\,,
\end{tikzcd}
\end{equation}
the Gysin pullbacks $\iota_\Gamma^!$ and $\iota_t^!$ commute. Then, by
using \eqref{eq_decomp_classes}, we have
\begin{equation} \label{eq_decomp_classes_nodal}
    \iota_t^{!}[\oM_\Gamma(W/B)]^\virt = \sum_{\substack{\Gamma_t\\ 
p_{\Gamma_t,*}\Gamma_t=\Gamma}} \iota_{\Gamma_t,*}[\oM_{\Gamma_t}(W_t)]^\virt\,,
\end{equation}
where $\iota_{\Gamma_t}$ is the inclusion of $\oM_{\Gamma_t}(W_t)$ in $\oM_\Gamma(W_t)$
given by \eqref{eq_decomp_moduli}.

The degeneration formula for nodal Gromov--Witten theory expresses the virtual class 
\[ \iota_0^! [\oM_\Gamma(W/B)]^\virt \]
in terms of the virtual classes in relative Gromov--Witten theory of $(Y_1,D)$ and $(Y_2,D)$. 
As the component $\shN_\Gamma(W/B)$ is entirely over $0 \in B$ and the normal bundle to $0$ in $B$ is trivial, we have 
\begin{equation}\label{vanN}
\iota_0^! [\shN_\Gamma(W/B)]^{\virt}=0
\end{equation}
by the excess intersection formula \cite[Theorem 6.3]{Fulton},
and so 
\[ \iota_0^! [\oM_\Gamma(W/B)]^{\virt} = \iota_0^! [\shP_\Gamma(W/B)]^{\virt} \]
is entirely supported on the fiber $\shP_\Gamma(W_0)$ of $\shP_\Gamma(W/B)$ over $0 \in B$.
The vanishing \eqref{vanN}  plays an essential role in the proof of the nodal degeneration formula given in 
\S \ref{section_degeneration_proof}.

We introduced the set $\overline{\Omega}_{\Gamma}$ of equivalence classes of splittings of $\Gamma$ in Definition \ref{def_splitting_gamma}.
Let $\sigma=(\gamma_1,\gamma_2) \in \overline{\Omega}_{\Gamma}$. The $(Y_1,D)$-valued and $(Y_2,D)$-valued stable graphs $\gamma_1$ and $\gamma_2$ define
as in Definition \ref{def_relative}
the moduli stacks 
$\shP_{\gamma_1}(Y_1,D)$ and $\shP_{\gamma_2}(Y_2,D)$ of $\gamma_1$-marked and $\gamma_2$-marked relative stable maps to $(Y_1,D)$ and $(Y_2,D)$  with the condition that no $\gamma_i$-marked node is mapped to the singular locus of the expanded target. 
The evaluation at the relative marked points corresponding to the relative legs of $\gamma_1$ and $\gamma_2$
defines a morphism
\[ \shP_{\gamma_1}(Y_1,D) \times \shP_{\gamma_2}(Y_2,D) \rightarrow (D \times D)^{\ell(\sigma)} \,.\]
We denote by $\Delta_\sigma$ the diagonal morphism 
\[ \Delta_\sigma \colon  D^{\ell(\sigma)} \rightarrow (D \times D)^{\ell(\sigma)}\,,\]
and we form the fiber product
\[\begin{tikzcd}
\shP_{\gamma_1}(Y_1,D) \times_{D^{\ell(\sigma)}} \shP_{\gamma_2}(Y_2,D)  
\arrow[r]
\arrow[d]
&
\shP_{\gamma_1}(Y_1,D) \times \shP_{\gamma_2}(Y_2,D) \arrow[d]\\
D^{\ell(\sigma)}
\arrow[r,"\Delta_\sigma"]& (D \times D)^{\ell(\sigma)}\,.
\end{tikzcd}\]
For every scheme $S$, an $S$-point of this fiber product is the data of a stable map over $S$ to an expansion 
$Y_1[l_1]$ of $Y_1$ along $D$, and of a stable map over $S$ to an expansion $Y_2[l_2]$ of $Y_2$
along $D$, such that the positions of the relative marked points on $$D[l_1]=D[l_2] \simeq D$$ match. 
As the contact orders also match, we can glue these two stable maps together to obtain a stable map over $S$ to the expansion $W_0[l_1+l_2]$ of $W_0$. This stable map is naturally 
$\Gamma$-marked, and no $\Gamma$-marked node is mapped to the singular locus of 
$W_0[l_1+l_2]$. Hence, we obtain a gluing morphism
\begin{equation} \label{eq_gluing_morphism_2}
\Phi_\sigma \colon 
\shP_{\gamma_1}(Y_1,D) \times_{D^{\ell(\eta)}} \shP_{\gamma_2}(Y_2,D)  
\rightarrow \shP_{\Gamma}(W_0) \,.\end{equation}
We can finally state the degeneration formula for nodal Gromov--Witten theory:

\begin{theorem}[\textbf{Theorem \ref{thm_intro_degeneration}}]
\label{thm_degeneration_cycle}
Let $W \rightarrow B$ be a flat projective morphism from a smooth variety $W$ to a smooth connected curve $B$ with a distinguished point $0 \in B$ 
such that 
 \begin{enumerate}
     \item [(i)]
the fibers $W_t$ over $t \in B\setminus\{0\}$
 are smooth varieties, 
 \item[(ii)]
 the fiber $W_0$ over $0 \in B$ is the union of two smooth irreducible components
 $Y_1$ and $Y_2$ glued along a smooth connected divisor $D$. 
 \end{enumerate}
 Then, for every vertical $W$-valued stable graph $\Gamma$, 
\begin{equation} \label{eq_degeneration_cycle}  
\iota_0^! [\oM_\Gamma(W/B)]^\virt 
= \sum_{\sigma=(\gamma_1,\gamma_2)\in  
\overline{\Omega}_{\Gamma}} 
\frac{m(\sigma)}{|\Aut(\sigma)|} \Phi_{\sigma,*} \Delta_\sigma^! 
([\shP_{\gamma_1}(Y_1,D)]^\virt \times [\shP_{\gamma_2}(Y_2,D)]^\virt)\,,
\end{equation}
where $m(\sigma)$ is the multiplicity factor of the splitting $\sigma$ as in \eqref{Eq: multiplicity factor}, and the virtual classes $[\shP_{\gamma_1}(Y_1,D)]^\virt, [\shP_{\gamma_2}(Y_2,D)]^\virt$ are given by Definition \ref{Def: virtual classes}.

At the numerical level, for all
$k_i\in \Z_{\geq 0}$ and cohomology classes 
 $\alpha_i \in H^{*}(W)$ indexed by the legs $i \in L_\Gamma$ of $\Gamma$, 
and for all $k_h \in \Z_{\geq 0}$ indexed by the half-edges 
$h \in H_\Gamma \setminus L_\Gamma$ of $\Gamma$ which are not legs, we have for every $t \in B \setminus \{0\}$:
\begin{align} \label{eq_deg_num}   \sum_{\substack{\Gamma_t\\p_{W_t,*}\Gamma_t=\Gamma}}  \bigg\langle \prod_{i \in L_\Gamma} \tau_{k_i}(\alpha_i) \prod_{h \in H_{\Gamma}\setminus L_{\Gamma}}
\tau_{k_h} & 
\bigg\rangle_{\Gamma_t}^{W_t}= 
 \nonumber\\
  \sum_{\sigma=(\gamma_1,\gamma_2)\in  
\overline{\Omega}_\Gamma} 
\frac{m(\sigma)}{|\Aut(\sigma)|}
\sum_{j_1,\dots,j_{\ell(\sigma)}}
(-1)^\epsilon &
\left\langle \prod_{i \in L_{\gamma_1,I}}\tau_{k_i}(\alpha_i) \prod_{h \in H_{\gamma_1}\setminus L_{\gamma_1}}
\tau_{k_h}\, \Bigg|\ \prod_{i=1}^{\ell(\sigma)} 
\delta_{j_i} \right\rangle_{\gamma_1}^{(Y_1,D)} \nonumber
\\
\boldsymbol{\cdot}  & 
\left\langle \prod_{i=1}^{\ell(\sigma)} 
\delta_{j_i}^\vee \ \Bigg|\,  \prod_{i \in L_{\gamma_2,I}}\tau_{k_i}(\alpha_i)
\prod_{h \in H_{\gamma_2}\setminus L_{\gamma_1}}
\tau_{k_h}
\right\rangle_{\gamma_2}^{(Y_2,D)}\,,
\end{align}
where the nodal relative Gromov--Witten invariants on the right-hand side are as in Definition \ref{def_nodal_relative_gw}, $\alpha_i$ denotes the restriction of 
$\alpha_i$ to $W_t$, $Y_1$, and $Y_2$ respectively,
$(\delta_j)_j$ is a basis of $H^{\star}(D)$,
$(\delta_j^\vee)_j$ is the Poincaré-dual basis,
and 
$(-1)^\epsilon$ is the sign determined formally by the equality 
\[ \prod_{i\in L_\Gamma} \alpha_i =(-1)^\epsilon 
\prod_{i\in L_{\gamma_1,I}} \alpha_i \prod_{i\in L_{\gamma_2,I}} \alpha_i \,.\]
\end{theorem}

The proof of \eqref{eq_degeneration_cycle} in Theorem \ref{thm_degeneration_cycle} is given in 
\S \ref{section_degeneration_technical}\,-\,\ref{section_degeneration_proof} below.
The numerical version \eqref{eq_deg_num} follows immediately from
\eqref{eq_decomp_classes_nodal}, \eqref{eq_degeneration_cycle}, and the Künneth decomposition of the class of the diagonal
$\Delta_\sigma$.

\subsection{Preliminary results}
\label{section_degeneration_technical}
We prove here a number of technical results which will be used in 
\S \ref{section_degeneration_proof} to prove Theorem \ref{thm_degeneration_cycle}.

For every splitting $\eta=(G_1,G_2) \in
\overline{\Omega}_{\overline{\Gamma}}$, we define 
\begin{eqnarray*}
\oM_{G_1,G_2}^\Delta(W_0) & := & \oM_{G_1}(Y_1,D) \times_{D^{\ell(\eta)}} \oM_{G_2}(Y_2,D) \,, \\
\oM_{G_1,G_2}(W_0) & := & \oM_{G_1}(Y_1,D) \times \oM_{G_2}(Y_2,D) \,, 
\end{eqnarray*}
We define the moduli stacks $\oM_{G_1,G_2,\Gamma}(W_0)$ and 
$\ooMM_{G_1,G_2,\Gamma}$ by the fiber diagram
\begin{equation}\label{eq_diagr}
\begin{tikzcd}
\oM_{G_1,G_2,\Gamma}(W_0)
\arrow[r]
\arrow[d]
&
\oM_{G_1,G_2}(W_0)
\arrow[d]\\
\ooMM_{G_1,G_2,\Gamma} \arrow[r,"\iota_{G_1,G_2,\Gamma}"] \arrow[d]
&
\ooMM_{G_1} \times \ooMM_{G_2} \arrow[d]
\\
\ooMM_\Gamma 
\arrow[r,"\iota_\Gamma"]& \ooMM_{\overline{\Gamma}}\,,
\end{tikzcd}
\end{equation}
where the morphism 
$\ooMM_{G_1} \times \ooMM_{G_2} \rightarrow \ooMM_{\overline{\Gamma}}$ 
is defined by gluing together a $G_1$-marked stable curve and a $G_2$-marked stable curves along their relative marked points. 

We have a disjoint union decomposition 
\[ \ooMM_{G_1,G_2,\Gamma} = \mathfrak{P}_{G_1,G_2,\Gamma} \sqcup \mathfrak{N}_{G_1,G_2,\Gamma}\,,\]
where $\mathfrak{P}_{G_1,G_2,\Gamma}$ is defined by the condition that the 
$\Gamma$-marked nodes are not nodes created by the gluing of the 
$G_1$-marked and $G_2$-marked stable curves.
Using the predeformability condition, we obtain a version of Lemma \ref{lem_key}, ensuring that we have a disjoint union decomposition
\[ \oM_{G_1,G_2,\Gamma}(W_0) = \shP_{G_1,G_2,\Gamma}(W_0) \sqcup \shN_{G_1,G_2,\Gamma}(W_0) \,.\]
Here,
$\shP_{G_1,G_2,\Gamma}(W_0)$ is defined by the condition that the 
$\Gamma$-marked nodes are not distinguished: the $\Gamma$-marked nodes are distinct from the nodes mapped to the singular locus of the expansions of $Y_1$ or $Y_2$ and distinct from the nodes 
newly created by the gluing of the relative marked points. 
In particular, the morphism $\oM_{G_1,G_2,\Gamma}(W_0) \rightarrow \ooMM_{G_1,G_2,\Gamma}$ restricts to a morphism 
$\shP_{G_1,G_2,\Gamma}(W_0) \rightarrow \mathfrak{P}_{G_1,G_2,\Gamma}$.

We define similarly 
\[ \oM_{G_1,G_2,\Gamma}^\Delta(W_0) = \shP_{G_1,G_2,\Gamma}^\Delta(W_0) \sqcup \shN_{G_1,G_2,\Gamma}^\Delta(W_0) \]
by replacing $\oM_{G_1,G_2}(W_0)$ by $\oM_{G_1,G_2}^\Delta(W_0)$ in \eqref{eq_diagr}.
The definition of the gluing morphism $\Phi_\eta \colon \oM_{G_1,G_2}^\Delta(W_0) \rightarrow \oM_{\overline{\Gamma}}(W_0)$
extends to define a gluing morphism 
\begin{equation}\label{eq_gluing_morphism_3}
\Phi_{\eta,\Gamma} \colon \oM_{G_1,G_2,\Gamma}^\Delta(W_0) \rightarrow \oM_\Gamma(W_0)\,.
\end{equation}

\begin{lemma} \label{lem_alpha}
For every cycle $\alpha$ on 
$\oM_{G_1,G_2}^\Delta(W_0)$, we have
\[ \iota_\Gamma^! \Phi_{\eta,*}\alpha = \Phi_{\eta,\Gamma,*} \iota_\Gamma^! \alpha \]
on $\oM_{G_1,G_2,\Gamma}^\Delta(W_0)$.
\end{lemma}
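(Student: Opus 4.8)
The plan is to recognize the asserted identity as a special case of the compatibility of refined Gysin homomorphisms with proper pushforward along a Cartesian square; the only real work is to exhibit the relevant square of moduli stacks as Cartesian, with $\Phi_{\eta,\Gamma}$ realized precisely as the base change of $\Phi_\eta$. Once this is done, the statement follows from \cite[Theorem 6.2(a)]{Fulton} (together with its lci version in \cite[\S 6.6]{Fulton} and its extension to Deligne--Mumford stacks \cite{vistoli}). I expect the only point requiring care to be exactly this identification of the square as Cartesian, i.e.\ checking that $\Phi_{\eta,\Gamma}$ is not merely \emph{a} morphism completing the square but literally the pullback of $\Phi_\eta$.

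First I would record that $\Phi_\eta$ is a morphism \emph{over} $\ooMM_{\overline{\Gamma}}$. The morphism $\oM_{G_1,G_2}^\Delta(W_0) \to \ooMM_{\overline{\Gamma}}$ used in the fiber square defining $\oM_{G_1,G_2,\Gamma}^\Delta(W_0)$ sends a pair of relative stable maps to $(Y_1,D)$ and $(Y_2,D)$ with matching relative points to the prestable curve obtained by gluing their domain curves along those points. Since gluing two stable maps glues their domain curves, this morphism factors as $\oM_{G_1,G_2}^\Delta(W_0) \xrightarrow{\Phi_\eta} \oM_{g_\Gamma,n_\Gamma,\beta_\Gamma}(W_0) \xrightarrow{\epsilon} \ooMM_{\overline{\Gamma}}$, where $\epsilon$ is the morphism remembering the domain curve with its curve-class decorations; this is the same $\epsilon$ that, pulled back along $\iota_\Gamma$, defines $\oM_\Gamma(W_0) = \oM_{g_\Gamma,n_\Gamma,\beta_\Gamma}(W_0)\times_{\ooMM_{\overline{\Gamma}}}\ooMM_\Gamma$ (the fiber over $0$ of $\oM_\Gamma(W/B)$).

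Next, by transitivity of fiber products,
\[ \oM_{G_1,G_2,\Gamma}^\Delta(W_0) = \oM_{G_1,G_2}^\Delta(W_0)\times_{\ooMM_{\overline{\Gamma}}}\ooMM_\Gamma = \oM_{G_1,G_2}^\Delta(W_0)\times_{\oM_{g_\Gamma,n_\Gamma,\beta_\Gamma}(W_0)}\oM_\Gamma(W_0)\,, \]
and I would check from the construction in \eqref{eq_gluing_morphism_3} that under this identification the projection to the second factor is exactly $\Phi_{\eta,\Gamma}$: a $\Gamma$-marked point on the left is a point of $\oM_{G_1,G_2}^\Delta(W_0)$ together with a $\Gamma$-marking of the glued domain curve, and $\Phi_{\eta,\Gamma}$ glues the two maps while retaining that $\Gamma$-marking — which is precisely the pair $(\Phi_\eta\circ\mathrm{pr}_1,\ \mathrm{pr}_{\ooMM_\Gamma})$ defining the universal map to the fiber product. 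The projection to the first factor is the canonical forgetful morphism $\oM_{G_1,G_2,\Gamma}^\Delta(W_0)\to\oM_{G_1,G_2}^\Delta(W_0)$. Hence the square with top arrows $\Phi_{\eta,\Gamma}$, bottom arrow $\Phi_\eta$, and vertical arrows the two forgetful maps is Cartesian, and $\Phi_{\eta,\Gamma}$ is the base change of $\Phi_\eta$.

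Finally, $\Phi_\eta$ is proper: $\oM_{G_1,G_2}^\Delta(W_0)$ and $\oM_{g_\Gamma,n_\Gamma,\beta_\Gamma}(W_0)$ are proper Deligne--Mumford stacks over $\C$ and the target is separated, so $\Phi_\eta$, and therefore its base change $\Phi_{\eta,\Gamma}$, is proper. The homomorphism $\iota_\Gamma^!$ is the refined Gysin homomorphism attached to the lci morphism $\iota_\Gamma\colon\ooMM_\Gamma\to\ooMM_{\overline{\Gamma}}$ of codimension $|E_\Gamma|$ (see \S\ref{sec:moduli_curves}); since refined Gysin homomorphisms commute with proper pushforward along morphisms over the base (\cite[Theorem 6.2(a)]{Fulton}, the lci case as in \cite[\S 6.6]{Fulton}, extended to Deligne--Mumford stacks in \cite{vistoli}), we obtain $\iota_\Gamma^!\Phi_{\eta,*}\alpha = \Phi_{\eta,\Gamma,*}\iota_\Gamma^!\alpha$ on $\oM_{G_1,G_2,\Gamma}^\Delta(W_0)$ for every cycle $\alpha$ on $\oM_{G_1,G_2}^\Delta(W_0)$, as claimed.
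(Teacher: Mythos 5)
Your proposal is correct and follows essentially the same route as the paper: the paper's proof also just exhibits the relevant fiber diagram relating $\Phi_{\eta,\Gamma}$ and $\Phi_\eta$ over $\iota_\Gamma\colon\ooMM_\Gamma\to\ooMM_{\overline{\Gamma}}$ and invokes the compatibility of the refined Gysin pullback with proper pushforward \cite[Thm 6.2(a)]{Fulton}. You simply spell out in more detail the verification that the square is Cartesian and that $\Phi_\eta$ is proper, which the paper leaves implicit.
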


\begin{proof}
The result follows from the compatibility of the Gysin pull-back and proper push-forward
\cite[Theorem 6.2 (a)]{Fulton} 
applied to the fiber diagram 
\begin{equation}\begin{tikzcd}
\oM_{G_1,G_2,\Gamma}^\Delta(W_0)  \arrow[r]
 \arrow["\Phi_{\eta,\Gamma}"', d]&
 \oM_{G_1,G_2}^\Delta(W_0) \arrow[d,"\Phi_\eta"]\\
\oM_\Gamma(W_0) \arrow[r] 
\arrow[d]
& \oM_{\overline{\Gamma}}(W_0)
\arrow[d]\\
\ooMM_{\Gamma}
\arrow[r,"\iota_{\Gamma}"] 
&\ooMM_{\overline{\Gamma}}\,.
\end{tikzcd}\end{equation}
\end{proof}

\begin{lemma} \label{lem_beta}
For every cycle $\beta$ on $\oM_{G_1,G_2}(W_0)$, we have 
 \[ \iota_{\Gamma}^! \Delta_\eta^! \beta = \Delta_\eta^! \iota_{\Gamma}^! \beta\]
on $\oM_{G_1,G_2,\Gamma}^\Delta(W_0)$. 
\end{lemma}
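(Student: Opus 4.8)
The plan is to recognize the claimed identity as an instance of the commutativity of two refined Gysin homomorphisms attached to \emph{independent} morphisms --- a regular embedding and an lci morphism --- and to deduce it from \cite[Thm 6.4]{Fulton}, extended to Deligne--Mumford and Artin stacks via \cite{vistoli, Kresch}, exactly in the spirit of Lemma \ref{lem_alpha} and of the commutativity used around \eqref{eq_com_1}.

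First I would isolate the two morphisms along which one pulls back. The diagonal $\Delta_\eta \colon D^{\ell(\eta)} \to (D\times D)^{\ell(\eta)}$ is the diagonal embedding of the smooth variety $(D\times D)^{\ell(\eta)}$, hence a regular embedding of codimension $\ell(\eta)\dim D$; the operation $\Delta_\eta^!$ on a cycle of $\oM_{G_1,G_2}(W_0)$ is the refined Gysin pullback formed from the evaluation morphism $\oM_{G_1,G_2}(W_0) \to (D\times D)^{\ell(\eta)}$ at the relative marked points, and it lands on $\oM_{G_1,G_2}^\Delta(W_0)$. The morphism $\iota_\Gamma \colon \ooMM_\Gamma \to \ooMM_{\overline{\Gamma}}$ is the finite, unramified, lci morphism of \S\ref{sec:moduli_curves}, with normal bundle \eqref{eq_normal}; the operation $\iota_\Gamma^!$ on a cycle of $\oM_{G_1,G_2}(W_0)$ is the refined Gysin pullback formed from the morphism $\oM_{G_1,G_2}(W_0)\to\ooMM_{\overline{\Gamma}}$ sending a pair of relative stable maps to the prestable curve obtained by abstractly gluing their domains along the relative markings, and it lands on $\oM_{G_1,G_2,\Gamma}(W_0)$.

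Next I would assemble these into a single morphism $\oM_{G_1,G_2}(W_0) \to (D\times D)^{\ell(\eta)}\times\ooMM_{\overline{\Gamma}}$ and note that, by construction, $\oM_{G_1,G_2,\Gamma}^\Delta(W_0)$ is its fibre product with $\Delta_\eta\times\iota_\Gamma \colon D^{\ell(\eta)}\times\ooMM_\Gamma \to (D\times D)^{\ell(\eta)}\times\ooMM_{\overline{\Gamma}}$. Since $\Delta_\eta\times\iota_\Gamma = (\Delta_\eta\times\id)\circ(\id\times\iota_\Gamma) = (\id\times\iota_\Gamma)\circ(\Delta_\eta\times\id)$, pulling back $\beta$ along $\Delta_\eta\times\id$ and then along $\id\times\iota_\Gamma$ computes $\iota_\Gamma^!\Delta_\eta^!\beta$, while the other order computes $\Delta_\eta^!\iota_\Gamma^!\beta$, both living on $\oM_{G_1,G_2,\Gamma}^\Delta(W_0)$; their equality is the commutativity of refined Gysin homomorphisms for embeddings into independent factors, i.e. \cite[Thm 6.4]{Fulton}. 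Here one also uses the routine fact that the Gysin pullback along $\Delta_\eta\times\id$, resp.\ $\id\times\iota_\Gamma$, applied to a cycle coming from the ambient product, agrees with $\Delta_\eta^!$, resp.\ $\iota_\Gamma^!$.

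I expect the only real content to be bookkeeping: verifying that the cube of fibre squares linking $\oM_{G_1,G_2}(W_0)$, $\oM_{G_1,G_2}^\Delta(W_0)$, $\oM_{G_1,G_2,\Gamma}(W_0)$ and $\oM_{G_1,G_2,\Gamma}^\Delta(W_0)$ is genuinely built from the two independent morphisms above, so that both iterated pullbacks land in the same space, and that \cite[Thm 6.4]{Fulton} applies over the Deligne--Mumford stacks $\oM_{G_i}(Y_i,D)$ and the smooth Artin stacks $\ooMM_\Gamma$, $\ooMM_{\overline{\Gamma}}$ (which it does, by \cite{vistoli, Kresch}). There should be no genuine obstacle: this is the exact analogue of Lemma \ref{lem_alpha}, with ``Gysin $\circ$ proper pushforward'' replaced by ``Gysin $\circ$ Gysin''.
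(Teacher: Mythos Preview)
Your proposal is correct and is essentially the paper's own argument: the paper simply draws the fiber diagram with $\Delta_\eta$ on one side and $\iota_\Gamma$ on the other and invokes \cite[Thm 6.4]{Fulton} directly, which is exactly the commutativity of refined Gysin pullbacks you describe. Your reformulation via the product $(D\times D)^{\ell(\eta)}\times\ooMM_{\overline{\Gamma}}$ is just an unpacking of the same diagram.
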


\begin{proof}
The Gysin pullbacks $\iota_\Gamma^!$ and $\Delta_\eta^!$ commute by
\cite[Theorem 6.4]{Fulton} applied to the fiber diagram 
 \begin{equation} 
\begin{tikzcd}[column sep=1em]
\oM_{G_1,G_2,\Gamma}^\Delta(W_0)  \arrow[r] \arrow[d] &
\oM_{G_1,G_2}^\Delta(W_0)
 \arrow[r]\arrow[d] &
 D^{\ell(\eta)}\arrow[d, "\Delta_\eta"]
 \\
\oM_{G_1,G_2,\Gamma}(W_0)
\arrow[r] \arrow[d] &
\oM_{G_1,G_2}(W_0)
\arrow[r] \arrow[d] & 
(D \times D)^{\ell(\eta)}\\
\ooMM_\Gamma  \arrow[r,"\iota_{\Gamma}"]& \ooMM_{\overline{\Gamma}}\,.
\end{tikzcd}
\end{equation}
\end{proof}

\begin{lemma} \label{lem_decomposition}
There are disjoint union decompositions
\begin{equation} \label{eq_dec_1}
\mathfrak{P}_{G_1,G_2,\Gamma}= \bigsqcup_{\substack{\sigma=(\gamma_1,\gamma_2) \in \overline{\Omega}_\Gamma\\ \overline{\gamma}_1=G_1, \overline{\gamma_2}=G_2}} 
\ooMM_{\gamma_1}\times \ooMM_{\gamma_2}\,,
\end{equation}
\begin{equation} \label{eq_dec_2}
\shP_{G_1,G_2,\Gamma}(W_0) = \bigsqcup_{\substack{\sigma=(\gamma_1,\gamma_2) \in \overline{\Omega}_\Gamma\\ \overline{\gamma}_1=G_1, \overline{\gamma_2}=G_2}} 
\shP_{\gamma_1}(Y_1,D) \times \shP_{\gamma_2}(Y_2,D) \,,\end{equation}
where we denote by $\overline{\gamma}_1$
(resp.\ $\overline{\gamma}_2$) the graph without edges obtained from 
$\gamma_1$ (resp.\ $\gamma_2$) by contraction of all edges, as in Definition \ref{def_contraction}.
\end{lemma}

\begin{proof}
We explain how to prove \eqref{eq_dec_2}. The proof of \eqref{eq_dec_1}
is similar and in fact simpler.
For every splitting $\sigma=(\gamma_1,\gamma_2) \in \overline{\Omega}_\Gamma$ such that 
$\overline{\gamma}_1=G_1$ and $\overline{\gamma}_2=G_2$, there is a natural morphism 
\[ \shP_{\gamma_1}(Y_1,D) \times \shP_{\gamma_2}(Y_2,D)\rightarrow \shP_{G_1,G_2,\Gamma}(W_0) \,.\]
Indeed, a curve with $\gamma_i$-marking has in particular a $\Gamma_i$-marking, and a curve obtained by gluing two curves with $\gamma_1$ and $\gamma_2$-markings along their relative marked points has a natural $\Gamma$-marking by Definition \ref{def_splitting_gamma} of a splitting of $\Gamma$.

Conversely, a point of $\shP_{G_1,G_2}(W_0)$ consists
of the data of relative stable maps  
$$C_1 \rightarrow Y_1[l_1]\,, \ \ \ C_2 \rightarrow Y_2[l_2]\, $$ together with  a $\Gamma$-structure on the curve $C$ obtained by gluing $C_1$ and $C_2$ along their relative marked points.
The data must satisfy the following condition:
every $\Gamma$-marked node of $C$ is distinct from the nodes of $C_1$ or $C_2$ mapping to the singular locus of $Y_1[l_1]$ or $Y_2[l_2]$ and distinct from the nodes in $C_1 \cap C_2$ created by the gluing. 
In particular, no $\Gamma$-marked node of $C$ is a node created by the gluing. Hence, splitting $C$ into $C_1$ and $C_2$ induces a splitting $\sigma=(\gamma_1,\gamma_2)$ of $\Gamma$ and 
$\gamma_i$-structures on $C_i$.
\end{proof}

\begin{lemma}\label{lem_decomposition_virtual}
Under the identification \eqref{eq_dec_2}, the following equality of cycles holds on $\shP_{G_1,G_2,\Gamma}(W_0)$: 
\begin{align*}
   \iota_\Gamma^!([\oM_{G_1}(Y_1,D)]^\virt \times [\oM_{G_2}(Y_2,D)]^\virt)|_{\shP_{G_1,G_2,\Gamma}(W_0)} & \\ 
   = \sum_{\substack{\sigma=(\gamma_1,\gamma_2) \in  \overline{\Omega}_\Gamma\\ \overline{\gamma}_1=  G_1, \overline{\gamma}_2=G_2}} &
[\shP_{\gamma_1}(Y_1,D)]^{\virt} \times [\shP_{\gamma_2}(Y_2,D)]^\virt\,.
\end{align*}
\end{lemma}

\begin{proof}
As \eqref{eq_diagr} is a fiber diagram, we have $\iota_\Gamma^! = \iota_{G_1,G_2,\Gamma}^!$ by \cite[Theorem 6.2 c)]{Fulton}.
Under the identification \eqref{eq_dec_1}, the restriction of 
$\iota_{G_1,G_2,\Gamma}$ to $\ooMM_{\gamma_1} \times \ooMM_{\gamma_2}$
is $(\iota_{\gamma_1}, \iota_{\gamma_2})$. Then, using the identification \eqref{eq_dec_2}, the restriction of  $$\iota_\Gamma^!\left([\oM_{G_1}(Y_1,D)]^\virt \times [\oM_{G_2}(Y_2,D)]^\virt\right)$$ 
to $\shP_{\gamma_1}(Y_1,D) \times \shP_{\gamma_2}(Y_2,D)$ is equal to the restriction of $$\iota_{\gamma_1}^! [\oM_{G_1}(Y_1,D)]^\virt \times
\iota_{\gamma_2}^! [\oM_{G_2}(Y_2,D)]^\virt$$ to 
$\shP_{\gamma_1}(Y_1,D) \times \shP_{\gamma_2}(Y_2,D)$, and so is equal to 
$[\shP_{\gamma_1}(Y_1,D)]^{\virt} \times [\shP_{\gamma_2}(Y_2,D)]^\virt$
by Definition \ref{Def: virtual classes} and \eqref{eq_def_relative}.
\end{proof}

\subsection{Proof of the nodal degeneration formula}
\label{section_degeneration_proof}

We will prove \eqref{eq_degeneration_cycle} in Theorem \ref{thm_degeneration_cycle} by applying 
$\iota_\Gamma^!$ to both sides of Jun Li's degeneration formula given by 
\begin{equation}  \label{li_degeneration}
\iota_0^! [\oM_{\overline{\Gamma}}(W/B)]^\virt 
= \sum_{\eta=(G_1,G_2)\in  
\overline{\Omega}_{\overline{\Gamma}}} 
\frac{m(\eta)}{|\Aut(\eta)|} \Phi_{\eta,*} \Delta_\eta^! 
([\oM_{G_1}(Y_1,D)]^\virt \times [\oM_{G_2}(Y_2,D)]^\virt)\,,
\end{equation}
as reviewed in 
\S \ref{section_degeneration_review}. By \cite[Theorem 6.4]{Fulton} applied to the fiber diagram \eqref{eq_com_1} for $t=0$, 
the Gysin pullbacks $\iota_\Gamma^!$ and $\iota_0^!$ commute. Then, via using \eqref{eq_virt_class_W_B}, we obtain
\begin{equation}\label{eq_proof_1}
\iota_\Gamma^! \iota_0^! [\oM_{\overline{\Gamma}}(W/B)]^\virt= \iota_0^! [\oM_\Gamma(W/B)]^\virt\,.\end{equation}
In other words, \ $\iota_\Gamma^!$ applied to the left side of \eqref{li_degeneration}
is the left side of \eqref{eq_degeneration_cycle}. It remains to match
the right sides of the formulas.

We observed in \eqref{vanN} that $\iota_0^! [\oM_\Gamma(W/B)]^\virt$ is supported on the component $\shP_\Gamma(W_0)$ of $\oM_\Gamma(W_0)$. Hence, by
\eqref{eq_proof_1}, the result of $\iota_\Gamma^!$ 
applied to the right side of \eqref{li_degeneration} is also supported on 
$\shP_\Gamma(W_0)$. To prove \eqref{eq_degeneration_cycle}, it is 
therefore enough to compute
the restriction 
\[ \left(\iota_{\Gamma}^! \Phi_{\eta,*} \Delta_\eta^!([\oM_{G_1}(Y_1,D)]^\virt \times [\oM_{G_2}(Y_2,D)]^\virt) \right)|_{\shP_\Gamma(W_0)} \]
to $\shP_\Gamma(W_0)$
for every $\eta =(G_1, G_2) \in \overline{\Omega}_{\overline{\Gamma}}$.

Using Lemmas \ref{lem_alpha} and \ref{lem_beta}, we have 
\begin{align*}
\iota_{\Gamma}^! \Phi_{\eta_{*}} \Delta_\eta^!([\oM_{G_1}(Y_1,D)]^\virt \times [\oM_{G_2}(Y_2,D)]^\virt) &
\\
= 
\Phi_{\eta, \Gamma,*} \Delta_\eta^! 
\iota_{\Gamma}^! &  ([\oM_{G_1}(Y_1,D)]^\virt \times [\oM_{G_2}(Y_2,D)]^\virt)\,.    
\end{align*}

A simple observation is the following: {\em for every cycle $\alpha$ on $\oM_{G_1,G_2,\Gamma}(W_0)$ supported on 
$\shN_{G_1,G_2,\Gamma}(W_0)$, the cycle
$\Delta_\eta^! \alpha$ is supported on 
$\shN_{G_1,G_2,\Gamma}^\Delta(W_0)$}.
By applying the observation to the part of 
\[\iota_{\Gamma}^! ([\oM_{G_1}(Y_1,D)]^\virt \times [\oM_{G_2}(Y_2,D)]^\virt)\,\] 
supported on $\shN_{G_1,G_2,\Gamma}(W_0)$,
we obtain
\begin{multline*}
\big( 
\Phi_{\eta, \Gamma,*} \Delta_\eta^!
\iota_{\Gamma}^! ([\oM_{G_1}(Y_1,D)]^\virt \times [\oM_{G_2}(Y_2,D)]^\virt)\big)|_{\shP_\Gamma(W_0)}  \\
= 
 \Phi_{\eta, \Gamma,*} \Delta_\eta^! 
 \Big(\iota_\Gamma^! ([\oM_{G_1}(Y_1,D)]^\virt  \times [\oM_{G_2}(Y_2,D)]^{\virt} ) 
|_{\shP_{G_1,G_2,\Gamma}(W_0)} \Big)\,.
\end{multline*}
Finally, by Lemma \ref{lem_decomposition_virtual}, we have 
\begin{align*}
   \iota_\Gamma^!([\oM_{G_1}(Y_1,D)]^\virt \times [\oM_{G_2}(Y_2,D)]^\virt)|_{\shP_{G_1,G_2,\Gamma}(W_0)} & \\ 
   = \sum_{\substack{\sigma=(\gamma_1,\gamma_2) \in  \overline{\Omega}_\Gamma\\ \overline{\gamma}_1=  G_1, \overline{\gamma}_2=G_2}} &
[\shP_{\gamma_1}(Y_1,D)]^{\virt} \times [\shP_{\gamma_2}(Y_2,D)]^\virt\,, 
\end{align*}
which completes the proof that $\iota_\Gamma^!$ applied to the right side of 
\eqref{li_degeneration} is equal to the right side of \eqref{eq_degeneration_cycle}. $\hfill \blacklozenge$

\section{Splitting formula for nodal relative Gromov--Witten theory}
\label{section_splitting}
We prove here a new splitting formula for nodal relative Gromov--Witten theory
which differs from the standard splitting formula for nodal absolute
Gromov-Witten theory by certain rubber terms.

We review 
the construction of the virtual class in relative Gromov--Witten theory in \S \ref{section_virtual_class} and we describe the virtual class in nodal 
relative Gromov--Witten theory in \S \ref{section_virtual_class_nodal}.
After a review of the 
statement  of the splitting formula for nodal absolute Gromov--Witten invariants  in 
\S \ref{sec: splitting for nodal}, 
the splitting formula in the
nodal relative case is proven in 
\S \ref{section_splitting_nodal} and recast in 
a more explicit form in \S \ref{section_splitting_explicit}.
In \S \ref{section_nodal_relative_absolute}, we combine the splitting formula with the
nodal rubber calculus of
\S \ref{section_nodal_rubber}
to prove that nodal relative Gromov--Witten invariants can be effectively reconstructed from absolute Gromov--Witten invariants.

\subsection{Relative Gromov--Witten theory}
\label{section_virtual_class}
We review here the construction{\footnote{There are by now many approaches to the foundations of
relative Gromov-Witten theory. A unified presentation with all of the comparison results (which show
equivalences of the virtual fundamental classes) can be found in the upcoming
paper \cite{HMPW}.}}
of the virtual 
class in relative Gromov--Witten theory based on working relatively to the moduli space of maps to a universal target. For details we refer to \cite[\S 5]{ACW} and \cite[\S 3.2]{AMW}. 

In absolute Gromov--Witten theory, we use the perfect obstruction theory relative to 
the forgetful morphism 
\begin{equation*}  
\epsilon \colon \oM_{g,n,\beta}(X) \rightarrow \ooMM_{g,n,\beta} 
\end{equation*}
defined in \eqref{eq_epsilon}, given by 
\begin{equation}
\nonumber
(R\pi_{*}f^{*}T_X)^\vee\,,
\end{equation}
where $\pi \colon C \rightarrow \oM_{g,n,\beta}(X)$
is the universal curve, $f \colon C \rightarrow X$ is the universal stable map, and 
$T_X$ is the tangent bundle of $X$, see 
\cite[Proposition 6.2]{BF} and \cite[Proposition 5]{behrend97gw}.
The virtual class is defined by the corresponding virtual pull-back \cite{manolache2008virtual}
of the 
ordinary fundamental class on the equidimensional stack $\ooMM_{g,n,\beta}$:
\begin{equation} 
\label{eq_virtual_absolute}
\nonumber
[\oM_{g,n,\beta}(X)]^\virt := \epsilon^! [\ooMM_{g,n,\beta}] \,.
\end{equation}

Let $X$ be a smooth projective variety over $\CC$, 
$$D \subset X$$ 
a smooth divisor, and $G$ an 
$(X,D)$-valued stable graph without edges.
In relative Gromov--Witten theory, Jun Li \cite{JunLi} defined the virtual class 
$[\oM_G(X,D)]^\virt$
using a 
perfect obstruction theory which can be viewed as being relative to the morphism
\[ \epsilon_G \colon \oM_G(X,D) \rightarrow \ooMM_G \,,\]
defined in \eqref{eq_epsilon_G}. The definition of this perfect obstruction theory is quite complicated due to the subtle nature of the predeformability condition (see Definition \ref{def_relative_map} (iii)). Generally, perfect obstruction theories become easier to describe when taken relative to the largest available smooth geometry. 
For example, Graber--Vakil gave a more compact description of the virtual class by defining a perfect obstruction theory relative to $\ooMM_G \times \mathcal{T}$, where 
$\mathcal{T}$ is the stack of expanded degenerations \cite[\S 2.8]{GraberVakil}. More recently,  Abramovich-Cadman-Wise have provided an alternative description obtained by working relatively to the space of maps to a universal target
\cite[\S 5]{ACW}, see also
\cite[\S 3.2]{AMW}. 
We follow here the Abramovich-Cadman-Wise approach which is technically simpler.

When working with pairs $(X,D)$, where $D$ is a smooth divisor, the \emph{universal target} is 
the pair of stacks $(\mathscr{A},\mathscr{D})$, where $$\mathscr{A}:= [\AA^1/\mathbb{G}_m]$$ is the classifying stack of line bundles with sections 
and $\mathscr{D}:=[0/\mathbb{G}_m]$. For every pair $(X,D)$, we have a canonical morphism 
of pairs 
\begin{equation} \label{eq_can_morphism}
(X,D) \rightarrow (\mathscr{A},\mathscr{D})\,,
\end{equation}
induced by the line bundle $\cO_X(D)$ with its section vanishing on $D$.

Let 
$\ooMM_G(\mathscr{A},\mathscr{D})$ be the moduli stack of relative prestable maps to 
$(\mathscr{A},\mathscr{D})$, defined as in 
Definition \ref{def_relative_map}
using expanded degenerations and the predeformability condition, but without the stability condition (Definition \ref{def_relative_map}(ii)). 
The crucial point is that $\ooMM_G(\mathscr{A},\mathscr{D})$ 
is equidimensional by \cite[Lemma 4.1.2]{ACW} and thus admits an ordinary fundamental class
$[\ooMM_G(\mathscr{A},\mathscr{D})]$. The natural morphism 
\eqref{eq_can_morphism} induces a morphism 
\begin{equation}\label{eq_epsilon_relative}
\nonumber
\eta_G \colon \oM_G(X,D) \rightarrow \ooMM_G(\mathscr{A},\mathscr{D})\,.
\end{equation}
By \cite[\S 5]{ACW}, the virtual
class in relative Gromov--Witten theory can be defined using the perfect obstruction theory relative to 
$\eta_G$ given by 
\begin{equation}\label{eq_pot_relative}
(R\pi_{*}f^{*}T_{(X,D)})^\vee\,,
\end{equation}
where $\pi \colon C \rightarrow \oM_G(X,D)$ is the universal curve, 
$f \colon C \rightarrow \mathcal{X} \rightarrow X$ is the composition of the universal relative stable map with the contraction of the expanded target on $X$, and $T_{(X,D)}$ is the log tangent
bundle of the pair $(X,D)$. The virtual class is then defined by
\begin{equation} 
\label{eq_virtual_relative}
[\oM_G(X,D)]^\virt := \eta_G^! [\ooMM_G(\mathscr{A},\mathscr{D})] \,,
\end{equation}
where $\eta_G^!$ is the corresponding virtual pull-back
\cite{manolache2008virtual}. 

\subsection{Nodal relative Gromov--Witten theory (revisited)}
\label{section_virtual_class_nodal}
The virtual class in nodal relative Gromov--Witten theory was defined in
Definition \ref{Def: virtual classes}.
A second approach to the nodal relative theory is provided by the geometric perspective of 
Abramovich-Cadman-Wise. We show here that the two definitions agree.

In Definition \ref{def_relative},
for every $(X,D)$-valued stable graph $\Gamma$,  we introduced the stack $\shP_\Gamma(X,D)$ of $\Gamma$-marked relative stable maps whose $\Gamma$-marked nodes are not distinguished (the $\Gamma$-marked nodes are not mapped to the singular locus of the expanded target). Let $\ooMM_\Gamma(\mathscr{A},\mathscr{D})$ denote the moduli stack
 of $\Gamma$-marked prestable maps to $(\mathscr{A},\mathscr{D})$.  Let $\mathfrak{P}_\Gamma(\mathscr{A},\mathscr{D})$ 
 be the substack of $\Gamma$-marked prestable maps to $(\mathscr{A},\mathscr{D})$ whose $\Gamma$-marked nodes are not distinguished. 
The latter requirement on nodes ensures that the proof of \cite[Lemma 4.1.2]{ACW} applies to $\mathfrak{P}_\Gamma(\mathscr{A},\mathscr{D})$. Hence,  
$\mathfrak{P}_\Gamma(\mathscr{A},\mathscr{D})$ is equidimensional and has an ordinary fundamental class
$[\mathfrak{P}_\Gamma(\mathscr{A},\mathscr{D})]$.
Moreover, the forgetful morphism 
$$\mathfrak{P}_\Gamma(\mathscr{A},\mathscr{D}) \rightarrow \ooMM_{\overline{\Gamma}}(\mathscr{A},\mathscr{D})$$ is 
a local complete intersection 
of codimension $|E_\Gamma|$,  so 
\begin{equation}\label{eq_pullback}
(\iota_\Gamma^![\ooMM_{\overline{\Gamma}}(\mathscr{A},\mathscr{D})])|_{\mathfrak{P}_\Gamma(\mathscr{A},\mathscr{D})}
=[\mathfrak{P}_\Gamma(\mathscr{A},\mathscr{D})]\, .\end{equation}

On the other hand, the natural morphism 
\eqref{eq_can_morphism} induces a morphism 
\begin{equation} 
\label{eq_epsilon_nodal}
\nonumber
\eta_\Gamma \colon \shP_\Gamma(X,D) \rightarrow \mathfrak{P}_\Gamma(\mathscr{A},\mathscr{D})\,,
\end{equation} 
and we obtain a perfect obstruction theory relative to $\eta_\Gamma$ by pulling-back  \eqref{eq_pot_relative}.
By Definition \ref{def_M_Gamma}, we have a  fiber diagram
\begin{equation}
\nonumber
\begin{tikzcd}
\oM_\Gamma(X,D)  \arrow[r]
 \arrow["\eta_\Gamma", d]&
\oM_{\overline{\Gamma}}(X,D) \arrow[d,"\eta_{\overline{\Gamma}}"]\\
\ooMM_\Gamma(\mathscr{A},\mathscr{D}) \arrow[r] 
\arrow[d]
& \mathfrak{M}_{\overline{\Gamma}}(\mathscr{A},\mathscr{D})
\arrow[d]\\
\ooMM_{\Gamma}
\arrow[r,"\iota_{\Gamma}"] 
&\ooMM_{\overline{\Gamma}}\,,
\end{tikzcd}\end{equation}
and by Definition \ref{def_relative}, $\shP_\Gamma(X,D)$ and $\mathfrak{P}_\Gamma(\mathscr{A},\mathscr{D})$
are unions of connected components of $\oM_\Gamma(X,D)$ and $\ooMM_\Gamma(\mathscr{A},\mathscr{D})$
respectively.
By \eqref{eq_virtual_relative}, 
property \eqref{eq_def_relative}, and the pull-back relation \eqref{eq_pullback},
we conclude the following result.
\begin{lemma} We have
$\label{eq_virtual_relative_nodal}
[\shP_\Gamma(X,D)]^\virt= \eta_\Gamma^! [\mathfrak{P}_\Gamma(\mathscr{A},\mathscr{D})]$ .
\end{lemma}

\subsection{Splitting formula for the nodal absolute theory}
\label{sec: splitting for nodal}
Let $X$ be a smooth projective variety over $\C$, and let
$\Gamma$ be an $X$-valued stable graph. 
Let $e=\{h,h'\}$ be an edge of $\Gamma$ connecting the
half-edges $h$ and $h'$.
Cutting $\Gamma$ along $e$ defines a new $X$-valued stable graph $\Gamma \setminus e$ in which the half-edges $h$ and $h'$ become legs. Cutting a $\Gamma$-marked stable map along the node marked by $e$ defines a 
\emph{splitting morphism} 
\[ s_e \colon \oM_\Gamma(X) \rightarrow \oM_{\Gamma \setminus e}(X) \,.\]
The evaluations at the two legs of 
$\Gamma \setminus e$ defined by $h$ and $h'$ define a morphism 
\[ \ev_e \colon \oM_{\Gamma \setminus e}(X) \rightarrow X \times X\,.\]
Denote the diagonal morphism by
$$\Delta \colon X \rightarrow X \times X\, .$$ 
As a node in the domain curve of a stable map can be created by gluing two marked points mapped to the same point, we have a fiber diagram 
\[\begin{tikzcd}
\oM_\Gamma(X)
\arrow[r, "s_e"]
\arrow[d]
&
\oM_{\Gamma \setminus e}(X)
\arrow[d, "\ev_e"]\\
X
\arrow[r,"\Delta"]& X \times X\,,
\end{tikzcd}\]
The 
\emph{splitting formula} of Gromov-Witten theory is 
\begin{equation} \label{eq_splitting}
 [\oM_\Gamma(X)]^\virt = \Delta^! [\oM_{\Gamma \setminus e}(X)]^{\virt} \,.\end{equation}
Splitting is a basic property of the virtual 
class of the moduli space of stable maps, see 
\cite[Axiom III]{behrend97gw} for the proof.

\subsection{Splitting formula for the nodal relative theory}
\label{section_splitting_nodal}

Let $X$ be a smooth projective variety over $\C$ with a smooth divisor  $D\subset X$.
Let $\Gamma$ be an $(X,D)$-valued stable graph, and 
let $e=\{h,h'\}$ be an edge of $\Gamma$, connecting the half-edges $h$ and $h'$.
Cutting $\Gamma$ along $e$ defines a new $(X,D)$-valued stable graph $\Gamma \setminus e$ in which the half-edges $h$ and $h'$ become legs. 
Cutting a $\Gamma$-marked relative stable map along the node marked by $e$ defines a
\emph{splitting morphism} 
\[ s_e \colon \shP_\Gamma(X,D) \rightarrow \shP_{\Gamma \setminus e}(X,D) \,.\]

Let $(X,D)^2$ be the moduli space parameterizing ordered pairs of points in the pair $(X,D)$:
$X$ expands along $D$ when the points approach $D$. The configurations of points in the bubbles are considered up to the 
scaling action of $\mathbb{G}_m$ on the $\PP^1$-fibers of the bubble. 
As a variety, 
$(X,D)^2$ is the blow-up of $X \times X$ along $D \times D$.
The space $(\PP^1,0)^2$ is illustrated as an example in Figure \ref{Fig: mod}.

\begin{figure}
\resizebox{.9\linewidth}{!}{\input{mod.pspdftex}}
\caption{The moduli space 
$(\PP^1,0)^2=\mathrm{Bl}_{(0,0)}(\PP^1 \times \PP^1)$
of ordered pairs of points in 
$(\PP^1,0)$}
\label{Fig: mod}
\end{figure}

The space $(X,D)^2$ and the natural $n$-pointed generalization $(X,D)^n$ have previously appeared 
in relative Gromov--Witten theory in the formulation of the Gromov--Witten/Pairs correspondence for relative theories \cite[\S 1.2]{PP}.

Let $\Delta_{\rel} \colon X \rightarrow (X,D)^2$ be the strict transform in $(X,D)^2$ of the diagonal 
$$\Delta \colon X \rightarrow X \times X\, .$$ 
Geometrically, $\Delta_{\rel}$ is the locus in $(X,D)^2$ parameterizing pairs of coincident points in $(X,D)$.
The evaluations at the two legs of 
$\Gamma \setminus e$ defined by $h$ and $h'$ define a natural morphism 
\[ \ev_e \colon \shP_{\Gamma \setminus e}(X,D) \rightarrow (X,D)^2\,,\]
where bubbles of the expanded target are contracted if they do not contain any of the images of the marked points corresponding to $h$ and $h'$.

As $\Gamma \setminus e$-marked points of nodal relative stable maps parameterized by 
$\shP_{\Gamma \setminus e}(X,D)$ and $\Gamma$-marked nodes of nodal relative
stable maps parameterized by $\shP_\Gamma(X,D)$ are not mapped in the singular locus of the expanded targets, we have a fiber diagram
\[\begin{tikzcd}
\shP_\Gamma(X,D)
\arrow[r, "s_e"]
\arrow[d]
&
\shP_{\Gamma \setminus e}(X,D)
\arrow[d, "\ev_e"]\\
X
\arrow[r,"\Delta_{\rel}"]& (X,D)^2\,.
\end{tikzcd}\]

Our first version of the splitting formula in nodal relative Gromov--Witten theory is 
formally similar to the splitting formula \eqref{eq_splitting} in absolute Gromov--Witten theory.

\begin{theorem} \label{thm_splitting_1}
For every $(X,D)$-valued stable graph $\Gamma$ and every edge $e$ of $\Gamma$, 
\[ [\shP_\Gamma(X,D)]^\virt =  \Delta_{\rel}^! [\shP_{\Gamma \setminus e}(X,D)]^\virt \,.\]
\end{theorem}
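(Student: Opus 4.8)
The plan is to prove Theorem~\ref{thm_splitting_1} as the formal analogue of the splitting axiom~\eqref{eq_splitting} in absolute Gromov--Witten theory (the relative-GW version of \cite[Axiom III]{behrend97gw}), exploiting the description of the relative virtual class via the universal target $(\mathscr{A},\mathscr{D})$ recalled in \S\ref{section_virtual_class}. By~\eqref{eq_virtual_relative_nodal} we have $[\shP_\Gamma(X,D)]^\virt=\eta_\Gamma^![\mathfrak{P}_\Gamma(\mathscr{A},\mathscr{D})]$ and $[\shP_{\Gamma\setminus e}(X,D)]^\virt=\eta_{\Gamma\setminus e}^![\mathfrak{P}_{\Gamma\setminus e}(\mathscr{A},\mathscr{D})]$, where the universal-target stacks are equidimensional and carry honest fundamental classes, and where the virtual pullbacks $\eta_\Gamma^!,\eta_{\Gamma\setminus e}^!$ are associated to the relative perfect obstruction theory~\eqref{eq_pot_relative}. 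So it suffices to: (a) prove the splitting statement at the level of the universal target, $[\mathfrak{P}_\Gamma(\mathscr{A},\mathscr{D})]=\delta_{\rel}^![\mathfrak{P}_{\Gamma\setminus e}(\mathscr{A},\mathscr{D})]$ for the appropriate relative diagonal $\delta_{\rel}$ of $(\mathscr{A},\mathscr{D})$, which is a statement about fundamental classes and a Gysin pullback along an lci morphism; and (b) check that the relative obstruction theories on the two sides are compatible with this Gysin pullback, so that applying $\eta_\Gamma^!$ and commuting it past $\delta_{\rel}^!$ recovers the statement over $(X,D)$ with $\Delta_{\rel}\colon X\to (X,D)^2$ in place of $\delta_{\rel}$.

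For (a): cutting the $e$-node presents $\mathfrak{P}_\Gamma(\mathscr{A},\mathscr{D})$ as the fibre product of the splitting morphism $\mathfrak{P}_{\Gamma\setminus e}(\mathscr{A},\mathscr{D})\to(\mathscr{A},\mathscr{D})^{\times 2}$ (evaluation at the two $e$-legs, with irrelevant bubbles contracted as in the definition of $\ev_e$) with the relative diagonal $\delta_{\rel}\colon\mathscr{A}\to(\mathscr{A},\mathscr{D})^{\times 2}$. The point is that for a $\Gamma$-marked map whose $e$-node is non-distinguished, the two branches at the node lie in a common unexpanded chart away from $\mathscr{D}$, which is precisely the condition of landing on the strict transform $\delta_{\rel}$. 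Since $\mathscr{A}$ and $(\mathscr{A},\mathscr{D})^{\times 2}$ are smooth algebraic stacks, $\delta_{\rel}$ is a regular closed immersion; and since the argument of \cite[Lem 4.1.2]{ACW} applies to $\mathfrak{P}_\bullet(\mathscr{A},\mathscr{D})$ (cf.~\S\ref{section_virtual_class}), both $\mathfrak{P}_\Gamma(\mathscr{A},\mathscr{D})$ and $\mathfrak{P}_{\Gamma\setminus e}(\mathscr{A},\mathscr{D})$ are equidimensional of the expected relative dimensions, so the fibre product has the expected dimension and $\delta_{\rel}^![\mathfrak{P}_{\Gamma\setminus e}(\mathscr{A},\mathscr{D})]=[\mathfrak{P}_\Gamma(\mathscr{A},\mathscr{D})]$.

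For (b): the key geometric input is that the normal bundle of the strict transform $\Delta_{\rel}\colon X\hookrightarrow(X,D)^2=\Bl_{D\times D}(X\times X)$ is the log tangent bundle $T_{(X,D)}$. Away from $D$ this is simply $N_\Delta=T_X$, and a local computation near $D$ — blowing up $\{x_1=x_2=0\}$ and taking the strict transform of $\{x_1=x_2\}$, with $x$ a local equation of $D$ — identifies it with $T_X(-\log D)$. This matches exactly the bundle $T_{(X,D)}$ entering the relative obstruction theory~\eqref{eq_pot_relative}, which is why $\Delta_{\rel}$, and not the ordinary diagonal, appears in the statement. Because every $\Gamma$-marked node of a map parametrized by $\shP_\Gamma(X,D)$ — in particular the $e$-node — is non-distinguished, it avoids the singular locus of the expanded target and the divisor $D[l]$; hence the universal curve and universal map near the $e$-node are, \'etale-locally, those of an ordinary node. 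Consequently the normalization distinguished triangle comparing $R\pi_{*}f^{*}T_{(X,D)}$ over $\shP_\Gamma(X,D)$ with its pullback along $s_e$ from $\shP_{\Gamma\setminus e}(X,D)$ is the usual one, with error term $\ev_e^{*}T_{(X,D)}=\ev_e^{*}N_{\Delta_{\rel}}$, exactly as in the proof of~\eqref{eq_splitting}. Feeding (a) and this compatibility into the functoriality of virtual pullbacks and their commutation with Gysin pullbacks \cite{behrend97gw, manolache2008virtual} then yields $\Delta_{\rel}^![\shP_{\Gamma\setminus e}(X,D)]^\virt=\Delta_{\rel}^!\eta_{\Gamma\setminus e}^![\mathfrak{P}_{\Gamma\setminus e}(\mathscr{A},\mathscr{D})]=\eta_\Gamma^!\delta_{\rel}^![\mathfrak{P}_{\Gamma\setminus e}(\mathscr{A},\mathscr{D})]=\eta_\Gamma^![\mathfrak{P}_\Gamma(\mathscr{A},\mathscr{D})]=[\shP_\Gamma(X,D)]^\virt$.

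I expect the main obstacle to be step (a): setting up the universal analogue $(\mathscr{A},\mathscr{D})^{\times 2}$ of $(X,D)^2$ together with its relative diagonal $\delta_{\rel}$, verifying that cutting the $e$-node genuinely produces a Cartesian square there, that the "non-distinguished" condition is precisely what forces the $e$-legs of the cut curve onto $\delta_{\rel}$ rather than into a deeper boundary stratum, and that the equidimensionality and expected-dimension bookkeeping makes the Gysin pullback of the fundamental class again a fundamental class. Once this is in place, the rest is a formal transcription of the absolute argument, the single new ingredient being the identification of $N_{\Delta_{\rel}}$ with the log tangent bundle $T_{(X,D)}$.
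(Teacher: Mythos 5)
Your part (b) is essentially the paper's argument: the proof in the paper also works relative to the universal target via \eqref{eq_virtual_relative_nodal}, uses the normalization distinguished triangle as in Behrend's Axiom III with error term $\sigma_e^{*}f^{*}T_{(X,D)}^\vee[1]$, and rests on the identification $L_{\Delta_{\rel}}=T_{(X,D)}^\vee[1]$, i.e.\ $N_{\Delta_{\rel}}=T_{(X,D)}$, which you correctly single out. The genuine gap is in your step (a) and in the bridge from (a) to (b). The key universal-target fact is not a fibre-product or Gysin statement over an auxiliary $(\mathscr{A},\mathscr{D})^{\times 2}$: it is that the splitting morphism is an \emph{isomorphism} $\mathfrak{P}_\Gamma(\mathscr{A},\mathscr{D})\simeq \mathfrak{P}_{\Gamma\setminus e}(\mathscr{A},\mathscr{D})$. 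A map to $\mathscr{A}=[\AA^1/\mathbb{G}_m]$ is a line bundle with a section, the section is nonvanishing at the two points being glued, and there is a unique element of $\mathbb{G}_m$ identifying the two fibres compatibly with the section values; so gluing at the $\mathscr{A}$-level is unconstrained and unique, and the same holds over expansions using the description of maps to expanded targets as tuples of line bundles with sections subject to \eqref{eq_relations}. In other words, the diagonal constraint is invisible at the universal-target level and appears only at the $(X,D)$-level. Your substitute — that $\delta_{\rel}\colon \mathscr{A}\to(\mathscr{A},\mathscr{D})^{\times 2}$ is a regular closed immersion ``since source and target are smooth'', and that $\delta_{\rel}^![\mathfrak{P}_{\Gamma\setminus e}(\mathscr{A},\mathscr{D})]=[\mathfrak{P}_\Gamma(\mathscr{A},\mathscr{D})]$ because dimensions match — does not hold up: smoothness of source and target never yields a regular immersion, the diagonal of a stack with nontrivial stabilizers is not an immersion at all (for $\mathscr{A}$ its base change to the atlas $\AA^2$ is $\AA^1\times\mathbb{G}_m\to\AA^2$, $(x,g)\mapsto(x,gx)$, with a $\mathbb{G}_m$-fibre over the origin), and identifying a refined Gysin pullback of a fundamental class with a fundamental class needs a multiplicity-one (transversality or Tor-independence) argument, not just equidimensionality.

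More decisively, the commutation $\Delta_{\rel}^!\,\eta_{\Gamma\setminus e}^! = \eta_\Gamma^!\,\delta_{\rel}^!$ in your final chain is not an instance of the standard compatibility of virtual and Gysin pullbacks, because the square it would require is not Cartesian: under the natural map $(X,D)^2\to(\mathscr{A},\mathscr{D})^{\times 2}$ the preimage of the relative diagonal of $\mathscr{A}$ is far larger than $\Delta_{\rel}(X)$ (every pair of points of $X\setminus D$, coincident or not, maps to the single non-stacky point of $\mathscr{A}\setminus\mathscr{D}$), and likewise $\shP_\Gamma(X,D)$ is \emph{not} the base change of $\mathfrak{P}_\Gamma(\mathscr{A},\mathscr{D})$ along $\eta_{\Gamma\setminus e}$ — by the isomorphism above, that base change is all of $\shP_{\Gamma\setminus e}(X,D)$. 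The repair is exactly the paper's route: use the isomorphism to set $\mathfrak{P}:=\mathfrak{P}_\Gamma(\mathscr{A},\mathscr{D})=\mathfrak{P}_{\Gamma\setminus e}(\mathscr{A},\mathscr{D})$, so that $\eta_\Gamma$ and $\eta_{\Gamma\setminus e}$ land in the same space, form the single fibre square in which $s_e$ sits over $\mathrm{id}_{\mathfrak{P}}\times\Delta_{\rel}\colon \mathfrak{P}\times X\to\mathfrak{P}\times(X,D)^2$, and conclude by the compatibility of the two relative obstruction theories over $\Delta_{\rel}$ — precisely the triangle and the identification $g^{*}L_{\Delta_{\rel}}=\sigma_e^{*}f^{*}T_{(X,D)}^\vee[1]$ that you describe in (b). So your (b) is the right second half, but (a) as formulated is both unnecessary and unjustified, and the formal step linking it to the conclusion fails.
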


\begin{proof}
We will work relatively to the universal target
$$(\mathscr{A},\mathscr{D})
=([\AA^1/\mathbb{G}_m], [0/\mathbb{G}_m])\,$$
as reviewed
in \S \ref{section_virtual_class}.
The first step is to prove that the splitting morphism induces an isomorphism
\begin{equation}
\label{eq_isom}
\mathfrak{P}_\Gamma(\mathscr{A},\mathscr{D})\simeq \mathfrak{P}_{\Gamma \setminus e}(\mathscr{A},\mathscr{D})\,.\end{equation}
When the target does not expand, a $\Gamma \setminus e$-marked relative prestable map 
to $(\mathscr{A},\mathscr{D})$ consists of a $\Gamma \setminus e$-marked prestable curve $C$ and 
a morphism, $$C \rightarrow \mathscr{A}=[\AA^1/\mathbb{G}_m]\,, $$
given by a line bundle $L$ on $C$ with a section $\sigma$.
The marked points of $C$ are away from the divisor $\mathscr{D}$ and hence are not zeros of $\sigma$. 
In particular, $\sigma$ does not vanish at the two marked points $x_h$ and $x_{h'}$ defined by the legs $h$ and $h'$ of $\Gamma \setminus e$. By gluing $x_h$ and $x_{h'}$ together, we obtain a prestable curve $\overline{C}$. We construct a line bundle $\overline{L}$ on $\overline{C}$ by gluing the fibers 
$L|_{x_h}$ and $L|_{x_{h'}}$ by the unique element of 
$\mathbb{G}_m$ sending $\sigma(x_h)$ to $\sigma(x_{h'})$.
By construction, $\sigma$ extends to a section 
$\overline{\sigma}$ of $\overline{L}$. The data of 
$\overline{C}$, $\overline{L}$, $\overline{s}$, defines a 
$\Gamma$-marked relative prestable map to  $(\mathscr{A},\mathscr{D})$ for which  the
splitting of the node marked by $e$ recovers $C$, $L$, and $s$.

When the target does expand, the proof is  similar using the description of maps to expansions of  $(\mathscr{A},\mathscr{D})$ as collections of line bundle with sections, as sketched in 
\cite[Proof of Lemma A(ii)]{AMW} using 
\cite[\S 8.2]{ACFW} and reviewed in more detail below. 

We recall the explicit description of the universal deformation 
$$(\widetilde{\AA^1[l]}, \widetilde{D[l]}) \rightarrow \AA^l$$ 
of the $l$-step expanded degeneration $\AA^1[l]$ of
$(\AA^1,0)$. For $l=0$, we have $$(\widetilde{\AA^1[l]}, \widetilde{D[l]})=(\AA^1,0)\,.$$
For $l>0$, $\widetilde{\AA^1[l]}$ is the blow-up of $\widetilde{\AA^1[l-1]} \times \AA^1$ along $\widetilde{D[l-1]} \times \{0\}$, and $\widetilde{D[l]}$ is the  strict transform of 
$\widetilde{D[l-1]} \times \AA^1$. In other words, $\widetilde{\AA^1[l]}$ is the subscheme of $\AA^1 \times (\PP^1)^l \times \AA^l$ given by the equations
\[ w_0 z_1=t_1 w_1\,, w_1 z_2=t_2 z_1 w_2\,,\dots, w_{l-1}z_l=t_l z_{l-1}w_l\,,\]
where $w_0$ is the coordinate on $\AA^1$, $w_i$ and $z_i$ are homogeneous coordinates on the $i$-th copy of $\PP^1$, and $t_1,\dots, t_l$ are coordinates on $\AA^l$.
The divisor $\widetilde{D[l]}$ is defined by the equation $w_l=0$.
We deduce that the universal deformation of the $l$-step expanded degeneration
of $(\mathscr{A},\mathscr{D})$ is a substack of $$[\AA^1/\mathbb{G}_m] \times [\PP^1/\mathbb{G}_m]^l \times [\AA^1/\mathbb{G}_m]^l\,.$$ 

The stack of relative prestable maps $\pi \colon C \rightarrow S$ to the universal deformation of the $l$-step expanded degeneration of $(\mathscr{A},\mathscr{D})$
is an open substack of the stack of prestable curves $\pi \colon C \rightarrow S$ endowed with the data
\begin{itemize}
\item[(i)]$l$ line bundles with sections $(T_1,s_{T_1}), \dots, (T_l, s_{T_l})$ on $S$, which are pull-backs of the tautological line bundles with sections on $[\AA^1/\mathbb{G}_m]^l$,
\item[(ii)] one line bundle with section $(L_0, s_{L_0})$ on $C$, which is the pull-back of the tautological line bundle with section on $[\AA^1/\mathbb{G}_m]$, 
\item[(iii)] $2l$ line bundles with sections $(L_{i,0}, s_{L_{i,0}}), (L_{i,\infty}, s_{L_{i,\infty}})$ for $1 \leq i\leq l$ on $C$, which are pull-backs of the tautological line bundles with sections on the $i$-th copy of 
$[\PP^1/\mathbb{G}_m]$ induced by the equivariant line bundles with sections 
$(\cO(0), z_i)$ and $(\cO(\infty), w_i)$ on $\PP^1$,
\end{itemize}
satisfying the conditions
\begin{align}
\label{eq_relations}
   (L_0, s_{L_0})\otimes(L_{1,0}, s_{L_{1,0}}) & \simeq \pi^{*} (T_1, s_{T_1}) \\
    \nonumber
   (L_{1,\infty}, s_{L_{1,\infty}})\otimes(L_{2,0}, s_{L_{2,0}}) & \simeq \pi^{*} (T_2, s_{T_2}) \\
     \nonumber
   & \vdots \\ 
     \nonumber
(L_{l-1,\infty}, s_{L_{l-1,\infty}})\otimes(L_{l,0}, s_{L_{l,0}}) & \simeq \pi^{*} (T_l, s_{T_l}) \,.  
  \nonumber
\end{align}
By generalizing the gluing argument given in the case without expansion to all these line bundles with sections, we obtain a proof of \eqref{eq_isom}.
We write 
$$\mathfrak{P} := \mathfrak{P}_\Gamma(\mathscr{A},\mathscr{D})= \mathfrak{P}_{\Gamma \setminus e}(\mathscr{A},\mathscr{D})\,.$$

The second step is to study the obstruction theories.
Consider the fiber diagram
\[\begin{tikzcd}
\shP_\Gamma(X,D)
\arrow[r, "s_e"]
\arrow[d, "g"]
&
\shP_{\Gamma \setminus e}(X,D)
\arrow[d, "\ev_e"]\\
\mathfrak{P} \times X
\arrow[r,"\Delta_{\rel}"]&
\mathfrak{P} \times (X,D)^2
\end{tikzcd}\]
of stacks over $\mathfrak{P}$. By \eqref{eq_virtual_relative_nodal} and \eqref{eq_pot_relative}, it is enough to show that 
the perfect obstruction theories 
$(R \pi_{\Gamma,*}f_\Gamma^* T_{(X,D)})^\vee$
and
$(R \pi_{\Gamma\setminus e,*}f_{\Gamma \setminus e}^* T_{(X,D)})^\vee$ 
are compatible over $\Delta_{\rel}$. Here,
$\pi_\Gamma, \pi_{\Gamma\setminus e}$ are the universal domain curves, and $f_\Gamma$, $f_{\Gamma\setminus e}$ are the universal relative stable maps composed with the contraction onto $X$ of the expanded targets.

Following the proof of the splitting formula in absolute Gromov--Witten theory \cite[Axiom III, Eq. (1)]{behrend97gw}, we obtain a distinguished triangle 
\[ (R \pi_{\Gamma\setminus e,*}f_{\Gamma \setminus e}^* T_{(X,D)})^\vee \rightarrow (R \pi_{\Gamma,*}f_\Gamma^* T_{(X,D)})^\vee \rightarrow \sigma_e^* f^* T_{(X,D)}^\vee[1] 
\xrightarrow{[1]} \ \,,\]
where $\sigma_e$ is the section of $\pi_{\Gamma}$
defined by the node marked by $e$.
The result is then obtained from the compatibility of the above triangle with 
the cotangent complex of $\Delta_{\rel}$,
$$L_{\Delta_{\rel}}=T_{(X,D)}^\vee[1]\, , \ \ \ 
g^{*}L_{\Delta_{\rel}}=\sigma_e^* f^* T_{(X,D)}^\vee[1] \,,$$
as argued in  \cite{behrend97gw}.
\end{proof}

\begin{example}
We illustrate  the line bundles with sections appearing in the proof of Theorem \ref{thm_splitting_1}
in a straightforward example.
Let \[\begin{tikzcd}
C
\arrow[r, "f"]
\arrow[d, "\pi"']
&
Z
\arrow[d,"\nu"]\\
\AA^1
\arrow[r,"\simeq"]&\ \AA^1\,,
\end{tikzcd}\]
be a relative prestable map to a 1-parameter family $\nu \colon Z \rightarrow \AA^1$ of expansions of $(X,D)$.
Assume that the general fiber $Z_{t \neq 0}$ is not expanded
and that the special fiber $Z_0$ is the $l$-step expansion of $(X,D)$,
$$Z_0 =X_0 \cup \bP_1 \cup \dots \cup \bP_l\,,$$
where we denote by $X_0$ the copy of $X$ in $Z_0$. 
Then, the (pull-backs from the universal case $(X,D)=(\mathscr{A},\mathscr{D})$ of the) line bundles with sections appearing in the proof of Theorem \ref{thm_splitting_1} are:
\begin{itemize}
\item[(i)] for all $1\leq i\leq l$, $(T_i,s_{T_i})=(\cO_{\AA^1}, s_0)$ where $s_0$ is the section of $\cO_{\AA^1}$ vanishing at $0 \in \AA^1$,
\item[(ii)] $(L_0, s_{L_0})=f^*(\cO_Z(\bP_1+\dots+\bP_l), s_{\bP_1+\dots+\bP_l})$, where 
$s_{\bP_1+\dots+\bP_l}$ is the section vanishing on $\bP_1 \cup \dots \cup \bP_l$, and
\item[(iii)] for all $1 \leq i \leq l$, $(L_{i,0}, s_{L_{i,0}})=f^*(\cO_Z(X_0+\bP_1+\dots+\bP_{i-1}), s_{X_0+\bP_1+\dots+\bP_{i-1}})$, where
$s_{X_0+\bP_1+\dots+\bP_{i-1}}$ is the section vanishing on $X_0 \cup \bP_1 \cup \dots \cup \bP_{i-1}$, and $(L_{i,\infty}, s_{L_{i,\infty}})=f^* (\cO_Z(\bP_{i+1}+\dots+\bP_{l}), s_{\bP_{i+1}+\dots+\bP_{l}})$, where
$s_{\bP_{i+1}+\dots+\bP_{l}}$ is the section vanishing on $\bP_{i+1} \cup \dots \cup \bP_{l}$.
\end{itemize}
For every $1 \leq i \leq l$,
we have 
$$ (\cO_Z(\bP_{i}+\dots+\bP_{l}), s_{\bP_{i}+\dots+\bP_{l}})
\otimes (\cO_Z(X_0+\bP_1+\dots+\bP_{i-1}), s_{X_0+\bP_1+\dots+\bP_{i-1}}) $$
$$=(\cO_Z(Z_0),s_{Z_0})\,,$$
where $s_{Z_0}$ is the section vanishing on $Z_0$. As $(\cO_Z(Z_0), s_{Z_0})
=\nu^{*}(\cO_{\AA^1}, s_0)$, the relations \eqref{eq_relations} hold.
\end{example}

\subsection{Explicit form of the splitting formula}
\label{section_splitting_explicit}
\subsubsection{The strict transform}
To make practical use of the splitting formula given by 
Theorem \ref{thm_splitting_1}, we have to calculate $\Delta_{\rel}^!$. The crucial point is
the following: $\Delta^! \neq \Delta_{\rel}^!$
in the geometry  
\[\begin{tikzcd}
\shP_\Gamma(X,D)
\arrow[r, "s_e"]
\arrow[d]
&
\shP_{\Gamma \setminus e}(X,D)
\arrow[d, "\ev_e"]\\
X
\arrow[r,"\Delta_{\rel}"]
\arrow[rd, "\Delta"']& (X,D)^2 \arrow[d,"p"] \\
& X \times X
\,.
\end{tikzcd}\]
Indeed, $\Delta_{\rel}(X)\subset (X,D)^2$ is the strict transform of the diagonal $\Delta(X) \subset X \times X$
under the blow-up map 
$$p \colon (X,D)^2 \rightarrow X \times X\, , $$
and not the total transform. Whereas we can use the Künneth decomposition of the class of the diagonal $\Delta(X)$
to concretely compute $\Delta^!$, we do not immediately have
such a decomposition for $\Delta_{\rel}(X)$.
To calculate $\Delta_{\rel}^!$, we will study how it differs from  $\Delta^!$. 

Let $R \subset (X,D)^2$ be the locus in $(X,D)^2$ of pairs of points in $(X,D)$ such that $X$ expands along $D$, 
both points are contained in the expansion, and both are in the same fiber of the projection of the expansion on $D$. By the definition of $(X,D)^2$, we have 
\begin{equation} \label{eq_R}
p^{-1}(\Delta(X))=\Delta_{\rel}(X)\cup R \,.
\end{equation}
In other words, $R$ is exactly the excess component responsible for the difference between $\Delta^!$ and $\Delta_{\rel}^!$.

\begin{lemma} \label{lem_excess_component}
The natural projection $R \rightarrow D$ is a trivial $\PP^1$-bundle.
Moreover, the intersection $\Delta_{\rel}(X) \cap R =D$ is a section of this 
$\PP^1$-bundle.
\end{lemma}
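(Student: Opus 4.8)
The claim is local over $D$, so the plan is to reduce to a fiberwise analysis. By definition $(X,D)^2$ is the blow-up of $X\times X$ along $D\times D$, and $R$ is the locus inside the exceptional divisor consisting of pairs of points both lying in a single $\PP^1$-fiber of the bubble $\bP=\PP(N_{D|X}\oplus\cO_D)$ over $D$. Concretely, a point of $R$ lying over a point $x\in D$ records an ordered pair of points in the fiber $\bP_x\cong\PP^1$ of $\bP$ over $x$, taken up to the $\mathbb{G}_m$-scaling action on this $\PP^1$ that fixes the zero and infinity sections. First I would identify this data: an ordered pair $(p_1,p_2)\in(\PP^1)^2$ modulo $\mathbb{G}_m$, and since $R$ sits over the diagonal of $X\times X$ we additionally impose that the two points have the same image in $X$, which forces them to lie in the same fiber; the quotient $(\PP^1)^2/\mathbb{G}_m$ is parametrized by the cross-ratio-type invariant, which is a single coordinate on $\PP^1$. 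This shows each fiber of $R\to D$ is $\PP^1$.

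The key steps in order are: (1) Write the universal deformation description of the bubble explicitly, as in the proof of Theorem \ref{thm_splitting_1}: over a neighborhood of $D$ the expansion $\bP$ is $\PP(N_{D|X}\oplus\cO_D)$, with the two sections $D_0$ (zero) and $D[l]$ (infinity); the $\mathbb{G}_m$-action scales the $N_{D|X}$-summand. (2) Parametrize $R$: a point of $R$ over $x\in D$ is an unordered-in-bubble-scaling ordered pair of points $p_1,p_2\in\bP_x$; choosing an affine coordinate $\zeta$ on $\bP_x\setminus\{D[l]_x\}$ in which $D_0$ is $\zeta=0$, the $\mathbb{G}_m$-action is $\zeta\mapsto\lambda\zeta$, so the pair $(\zeta_1,\zeta_2)$ is determined up to simultaneous scaling. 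The invariant ratio $[\zeta_1:\zeta_2]\in\PP^1$ (interpreted correctly when one of the points is at infinity) gives an isomorphism of the fiber with $\PP^1$, and this is manifestly constant in $x$, so $R\to D$ is the \emph{trivial} $\PP^1$-bundle $D\times\PP^1$. (3) Locate $\Delta_{\rel}(X)\cap R$: the strict transform $\Delta_{\rel}(X)$ of the diagonal meets the exceptional divisor in the normal-direction limit of the diagonal, i.e.\ it corresponds to $p_1=p_2$, which in the coordinate above is $\zeta_1=\zeta_2$, i.e.\ $[\zeta_1:\zeta_2]=[1:1]$. This is a single constant point of the $\PP^1$-fiber, so $\Delta_{\rel}(X)\cap R=D\times\{[1:1]\}$, a section of $R\to D$, and it is naturally identified with $D\subset X$ via the projection. (4) Assemble: since all the identifications are compatible as $x$ varies over $D$ (the normal bundle $N_{D|X}$ enters only through the $\mathbb{G}_m$-action, which the invariant ratio kills), everything globalizes, proving both assertions.

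The main obstacle I expect is Step (2): checking carefully that the $\PP^1$-bundle is genuinely \emph{trivial} rather than merely a $\PP^1$-bundle, since a priori the ratio $[\zeta_1:\zeta_2]$ could twist by $N_{D|X}$ as $x$ moves. The point to be verified is that the $\mathbb{G}_m$-quotient of $(\PP^1)^2$ (where $\PP^1=\PP(N_{D|X,x}\oplus\cO)$ with $\mathbb{G}_m$ acting with weights $(1,0)$) depends only on the abstract action and not on the line $N_{D|X,x}$: indeed the invariant $\zeta_1/\zeta_2$ is a section of $N_{D|X}\otimes N_{D|X}^\vee=\cO_D$, hence canonically trivial. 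Working this out via the explicit equations $w_0z_1=t_1w_1,\dots$ from the proof of Theorem \ref{thm_splitting_1} (restricted to the single-bubble, diagonal case) makes the triviality transparent and also pins down the section $\{[1:1]\}$; the rest is bookkeeping. One should also confirm that the bubbles not containing either marked point are contracted in the definition of $(X,D)^2$, so that $R$ really involves only a single bubble and the above local model is the whole story.
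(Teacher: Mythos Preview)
Your approach is essentially the same as the paper's: the open part of each fiber of $R\to D$ is a trivial $\mathbb{G}_m$-torsor via the ratio $\zeta_2/\zeta_1$ (the paper phrases this as ``the unique $g\in\mathbb{G}_m$ with $p_2=g\cdot p_1$''), and your observation that this ratio lives in $N_{D|X}\otimes N_{D|X}^\vee=\cO_D$ is exactly why the bundle is trivial rather than twisted. The identification of $\Delta_{\rel}(X)\cap R$ with the section $[1:1]$ is also what the paper does.

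One correction to your final paragraph: your expectation that ``$R$ really involves only a single bubble'' is wrong, and this is where your ``interpreted correctly when one of the points is at infinity'' is too vague. In the moduli description, the two compactifying points $[1:0]$ and $[0:1]$ of the $\PP^1$-fiber are \emph{not} single-bubble configurations with one point on a section (such configurations are not allowed in $(X,D)^2$); they are \emph{two-bubble} configurations with one marked point in each bubble, distinguished by which bubble is attached to $X$. The paper makes this explicit: $R\setminus U$ consists of these two sections. Your blow-up computation $R=\PP(N_{D|X}\oplus N_{D|X})\cong D\times\PP^1$ gets the right answer regardless, but you should understand that the boundary points correspond to two-bubble degenerations, not to points hitting $D_0$ or $D_\infty$.
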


\begin{proof}
There is an open subset $U$ of  $R$ which parameterizes ordered pairs of points $(p_1,p_2)$
contained in a bubble $$\bP=\PP(N_{D|X} \oplus \cO_D)\, ,$$
away from the sections $D_0$, 
$D_\infty$ of $\bP$, belonging to the same $\PP^1$-fiber of the projection 
$\bP \rightarrow D$, and considered up to the $\mathbb{G}_m$-action scaling the fibers of $\bP \rightarrow D$. For every such pair $(p_1,p_2)$, there is a unique element $g$ in 
$\mathbb{G}_m$ such that $p_2=g\cdot p_1$. It follows that $U \rightarrow D$ is a trivial $\mathbb{G}_m$-torsor, with section $s_\Delta$ determined by the locus of pairs $(p_1,p_2)$
with $p_1= p_2$. The complement 
$R \setminus U$ is the union of the two sections of 
$R \rightarrow D$ given by having two bubbles, a point in each bubble, and either $p_1$ or $p_2$ in the bubble attached to $X$. After adding these two sections to the trivial $\mathbb{G}_m$-torsor $U \rightarrow D$, we obtain
 a trivial $\PP^1$-bundle.
Finally, the intersection $\Delta_{\rel}(X) \cap R$ is the section $s_\Delta$ of 
$R \rightarrow D$.
\end{proof}

By Lemma \ref{lem_excess_component}, the inclusion $\Delta_R \colon R \rightarrow (X,D)^2$ is a regular embedding of codimension $\dim X$. Hence, 
\begin{equation} \label{eq_excess}
\Delta_{\rel}^! = \Delta^! - \Delta_R^! \,.\end{equation}
Next, we will  compute $\Delta_R^! [\shP_{\Gamma \setminus e}(X,D)]^\virt$.
The answer, given in Theorem \ref{thm_splitting_2} below, is phrased in terms of nodal rubber Gromov--Witten invariants.

\subsubsection{Nodal rubber Gromov-Witten theory}
The definition of a rubber stable map \cite{li2001stable, JunLi, MP}
is similar to 
 Definition \ref{def_relative_map}
 of a relative stable map. Over a geometric point, a \emph{rubber stable map with target $(D,N_{D|X})$} is a map 
 $$f \colon C \rightarrow \bP_l$$ 
 from a prestable curve to a chain $\bP_l$ of $l$-copies of the $\PP^1$-bundle 
\[ \bP = \PP(N_{D|X} \oplus \cO_D) \rightarrow D\,\]
satisfying the following properties:
\begin{itemize}
    \item[(i)] No irreducible component of $C$ is entirely mapped by $f$ into the singular locus of $\bP_l$ or the divisors
    $$D_0, D_\infty \subset \bP_l\, ,$$ where $D_0$ is the zero section of the first copy of $\bP$ and $D_\infty$ is the infinity section of the last copy of 
$\bP$.
    In addition, the relative multiplicities along $D_0$ and $D_\infty$ are fixed.
    \item[(ii)] The map $f$ is \emph{stable} in the sense that there are finitely many pairs $(r_1,r_2)$, where $r_1$ is an automorphism of $C$, $r_2\in \mathbb{G}_m^l$ is an automorphism of $\bP_l$ scaling the fibers of the bubbles, 
    and $f \circ r_1=r_2 \circ f$.
    \item[(iii)] For each point $p \in C_s$ such that $f(p)$ is contained in the singular locus of $\bP_l$, $f$ is 
    \emph{predeformable} at $p$.
\end{itemize}
To such maps which only differ by the action of 
$\mathbb{G}_m^l$ on $\bP_l$ are considered to be isomorphic.

\begin{definition} \label{def_rubber_graph}
A \emph{$(D, N_{D|X})$-valued 
stable rubber graph} is a
$(\bP, D_0 \cup D_\infty)$-valued stable graph as in 
Definition \ref{Def: XDvalued} in all ways except two: 
\begin{enumerate}
\item[(i)] the curve classes $\beta(v)$ attached to the vertices $v$
of $\Gamma$ are elements of $H_2^+(D)$, {\em not} of 
$H_2^+(\bP)$. 
\item[(ii)] vertices $v$ with $g(v)=0$, $n(v)=2$,
and $\beta(v)=0$ are stable if there exists
at least one vertex $v'$ satisfying
$H_2^+(D)$-stability.
\end{enumerate}
The second condition will permit  multiple
covers of the fibers over $\bP\rightarrow D$ ramified over $D_0$ and $D_\infty$ in
the presence of other components.
\hspace*{\fill} $\Diamond$
\end{definition} 
\vspace{8pt}

For every $(D,N_{D|X})$-valued stable rubber graph $G$ without edges, we denote by
\begin{equation} \label{eq_rubber_moduli}
\oM_G^\sim(D,N_{D|X})\end{equation}
the moduli stack
of $G$-marked rubber stable maps to $(D,N_{D|X})$.
Connected components $C_v$ of the domain curve of a $G$-marked rubber stable map are 
indexed by 
the vertices $v$ of $G$. A $G$-marked rubber stable map restricted to $C_v$ is of genus $\g(v)$ with  $\n(v)$ marked point
and of class $\beta(v)$.
The relative multiplicities along $D_0$ and $D_\infty$ are prescribed by $\mu_G$. 

Let $[\oM_G^\sim(D,N_{D|X})]^\virt$ be the virtual 
class given by rubber Gromov--Witten theory \cite{li2001stable, JunLi}.
As in \eqref{eq_epsilon_G_relative}, we have a forgetful morphism 
\begin{equation}\label{eq_epsilon_G_rubber}
\epsilon_G \colon \oM_G^\sim(D,N_{D|X}) \rightarrow \ooMM_G
\end{equation}
remembering the domain curve.

\begin{definition} \label{def_rubber}
For every $(D,N_{D|X})$-valued stable rubber graph $\gamma$, 
we define the moduli stack $\oM_\gamma(D,N_{D|X})$ of
\emph{$\gamma$-marked rubber stable maps} by the fiber diagram
\[\begin{tikzcd}
\oM_\gamma^\sim(D,N_{D|X})
\arrow[r]
\arrow["\epsilon_\gamma"', d]
&
\oM_{\overline{\gamma}}^\sim(D,N_{D|X})
\arrow[d,"\epsilon_{\overline{\gamma}}"]\\
\ooMM_\gamma
\arrow[r,"\iota_\gamma"]& \ooMM_{\overline{\gamma}}\,,
\end{tikzcd}\]
where $\overline{\gamma}$ is the $(D,N_{D|X})$-valued stable rubber graph without edges obtained from $\gamma$ by contraction of all edges, and
$\oM_{\overline{\gamma}}^\sim(D,N_{D|X})$
and $\epsilon_{\overline{\gamma}}$
are defined by \eqref{eq_rubber_moduli}
and \eqref{eq_epsilon_G_rubber}
applied to $G=\overline{\Gamma}$.
In other words, a $\gamma$-marked relative stable curve is a stable map with the data of a
$\gamma$-marking on its domain curve.

We define a virtual class on $\oM_{\overline{\gamma}}^\sim(D,N_{D|X})$ by 
\begin{equation}
    \label{eq_def_rubber}
    [\oM_\gamma^\sim (D,N_{D|X})]^{\virt} := \iota_\gamma^! [\oM_{\overline{\gamma}}^\sim(D,N_{D|X})]^\virt \,.
\end{equation}
By the predeformability condition, we have a version of Lemma \ref{lem_key} for rubber stable maps and hence
a corresponding moduli stack 
$$\shR_\gamma(D,N_{D|X}) \subset \oM_\gamma (D,N_{D|X})$$
of 
$\gamma$-marked rubber maps (with no $\gamma$-marked node mapped in the singular locus of the expanded target) which is a union of connected components of $\oM_\gamma (D,N_{D|X})$.
We define a virtual class $[\shR_\gamma(D,N_{D|X})]^\virt$
by restriction of $ [\oM_\gamma^\sim (D,N_{D|X})]^{\virt}$
to $\shR_\gamma(D,N_{D|X})$.\hspace*{\fill} $\Diamond$
\end{definition}
\vspace{8pt}

\begin{definition}
Nodal rubber Gromov--Witten invariants of $(D,N_{D|X})$ of type 
$\gamma$ are defined by integration over  $[\shR_\gamma(D,N_{D|X})]^\virt$, as in 
\eqref{eq_nodal_relative_gw}, with arbitrarily psi-classes insertions at interior marked points but no psi-classes insertions at the relative marked points along $D_0$ and $D_\infty$.
A \emph{nodal rubber Gromov--Witten invariant of $(D,N_{D|X})$} is a nodal rubber Gromov--Witten invariant of $(D,N_{D|X})$ of type $\gamma$ for some $(D,N_{D|X})$-valued stable rubber graph $\gamma$. \hspace*{\fill} $\Diamond$
\end{definition}  

\subsubsection{Splitting}
Additional notation is necessary to state the result of the 
computation of 
$\Delta_R^! [\shP_{\Gamma \setminus e}(X,D)]^\virt$ 
given in Theorem \ref{thm_splitting_2} below.

\begin{definition} \label{def_boundary_splitting_gamma}
A \emph{boundary splitting} $\sigma$ of $\Gamma \setminus e$ is an ordered pair $(\gamma_1, \gamma_2)$, where 
$\gamma_1$ is an $(X,D)$-stable graph and $\gamma_2$ is a $(D, N_{D|X})$-valued stable rubber graph,  satisfying 
\begin{itemize}
    \item[(i)] $\gamma_1$ and $\gamma_2$ have the same number $\ell(\sigma)$ of relative legs
    and relative legs to $D_0$ respectively, 
    \item[(ii)] for all $1 \leq i \leq \ell(\sigma)$, the relative multiplicity $\mu_{\gamma_1}(i)$ attached to the $i$-th leg of $\gamma_1$ is equal to the relative multiplicity 
    $\mu_{\gamma_2}(i)$ attached to the $i$-th leg of $\gamma_2$ relative to $D_0$,
    \item[(iii)] the labelling of legs of $\gamma_1$ and $\gamma_2$ form a partition of 
    the labelling of legs of $\Gamma \setminus e$, and the legs $h$ and $h'$ of $\Gamma \setminus e$ are legs of $\gamma_2$,
\end{itemize}
together 
with the extra data of an isomorphism between $\Gamma \setminus e$ and the graph obtained by first gluing for all $1 \leq i \leq \ell(\sigma)$ the 
    $i$-th relative leg of $\gamma_1$ with the $i$-th leg of $\gamma_2$ relative to $D_0$, then contracting the 
    $\ell(\sigma)$ newly created edges, and finally viewing the curve classes $\beta(v) \in H_2^+(D)$ attached to vertices of $\gamma_2$ as elements of 
    $H_2^+(X)$ using the natural map
    $H_2^+(D) \rightarrow H_2^+(X)$ induced by the inclusion $D \subset X$. In particular, we have a bijection preserving the relative multiplicities between the set of relative legs of $\Gamma$ and the set of legs of $\gamma_2$ relative to $D_\infty$.
    
Two boundary splittings $(\gamma_1,\gamma_2)$ and $(\gamma_1',\gamma_2')$ are isomorphic if there exist
isomorphisms $\gamma_1 \simeq \gamma_1'$ and $\gamma_2 \simeq \gamma_2'$ compatible 
with the data of the isomorphisms between $\Gamma$ and the glued contracted graphs.

We denote by $\partial \Omega_{\Gamma \setminus e}$ the set of isomorphism classes of splittings of $\Gamma \setminus e$. 
Two boundary splittings $\sigma_1$ and $\sigma_2$ are equivalent if they differ by a permutation of the labelling of relative legs. We denote by $\partial \overline{\Omega}_{\Gamma \setminus e}$ the set of equivalence classes of splittings of $\Gamma \setminus e$. For every $\sigma \in \partial \overline{\Omega}_{\Gamma \setminus e}$, we denote by $|\Aut(\sigma)|$ the order of the group of permutations of the labelling of relative legs fixing one splitting representative of the class $\sigma$, and by $$m(\sigma)=\prod_{i=1}^{\ell(\sigma)} \mu_{\gamma_1}(i)
=\prod_{i=1}^{\ell(\sigma)} \mu_{\gamma_2}(i)$$ the product of the 
$\ell(\sigma)$ relative multiplicities.
\hspace*{\fill} $\Diamond$
\end{definition}
\vspace{8pt}

Fix a boundary splitting $\sigma=(\gamma_1,\gamma_2) \in \partial \overline{\Omega}_{\Gamma \setminus e}$ of $\Gamma \setminus e$ as in Definition \ref{def_boundary_splitting_gamma}.
As in Definitions \ref{def_relative} and 
\ref{def_rubber}, let
$\shP_{\gamma_1}(X,D)$ and $\shR_{\gamma_2}(D,N_{D|X})$ 
be the corresponding moduli stacks of $\gamma_1$-marked
relative stable maps to $(X,D)$ and $\gamma_2$-marked
rubber stable maps to $(D,N_{D|X})$ respectively, with the condition that no $\gamma_i$-marked node is mapped to the singular locus of the expanded target. Evaluation at the relative marked points corresponding to the relative legs of
$\gamma_1$ and the legs of $\gamma_2$ relative to $D_0$
defines a morphism
\[ \shP_{\gamma_1}(X,D) \times \shR_{\gamma_2}(D,N_{D|X}) \rightarrow (D \times D)^{\ell(\sigma)} \,.\]
We denote by $\Delta_\sigma$ the diagonal morphism 
\[ \Delta_\sigma \colon  D^{\ell(\sigma)} \rightarrow (D \times D)^{\ell(\sigma)}\,,\]
and we form the fiber product
\[\begin{tikzcd}
\shP_{\gamma_1}(X,D) \times_{D^{\ell(\sigma)}} \shR_{\gamma_2}(D,N_{D|X})
\arrow[r]
\arrow[d]
&
\shP_{\gamma_1}(X,D) \times \shR_{\gamma_2}(D,N_{D|X}) \arrow[d]\\
D^{\ell(\sigma)}
\arrow[r,"\Delta_\sigma"]& (D \times D)^{\ell(\sigma)}\,.
\end{tikzcd}\]
As in \eqref{eq_gluing_morphism_1}, \eqref{eq_gluing_morphism_2}, and \eqref{eq_gluing_morphism_3}, we have a natural gluing morphism
\[ \Psi_\sigma \colon 
\shP_{\gamma_1}(X,D) \times_{D^{\ell(\sigma)}} \shR_{\gamma_2}(D,N_{D|X}) 
\rightarrow \shP_{\Gamma\setminus e}(X,D) \,.\]

Evaluation at the two marked points 
associated to the legs $h$ and $h'$ of $\gamma_2$ induces a morphism
\[ \ev_e \colon \shR_{\gamma_2}(D,N_{D|X}) 
\rightarrow D \times D \,.\]
For rubber stable maps, we do not have an evaluation morphism valued in $\bP$ because of the quotient by the scaling 
$\mathbb{G}_m$-action, but the composition with the contraction of 
$\bP$ on $D$ is well-defined.
Finally, we form the fiber diagram 
\[\begin{tikzcd}
\shR_{\gamma_2}^{\Delta_D}(D,N_{D|X})
\arrow[r]
\arrow[d]
&
\shR_{\gamma_2}(D,N_{D|X}) \arrow[d]\\
D
\arrow[r,"\Delta_D"]& D \times D\,,
\end{tikzcd}\]
where $\Delta_D \colon D \rightarrow D \times D$ is the diagonal morphism.

\begin{theorem} \label{thm_splitting_2}
For every $(X,D)$-valued stable graph $\Gamma$ and every edge $e$ of $\Gamma$, 
\begin{multline*} 
\Delta_R^! [\shP_{\Gamma \setminus e}(X,D)]^\virt
=\\
\sum_{\sigma=(\gamma_1,\gamma_2)\in \partial \overline{\Omega}_{\Gamma \setminus e}}
\frac{m(\sigma)}{|\Aut(\sigma)|}
\Psi_{\sigma, *} \Delta_\sigma^!([\shP_{\gamma_1}(X,D)]^\virt
\times \Delta_D^! [\shR_{\gamma_2}(D,N_{D|X})]^\virt)\,.
\end{multline*}
\end{theorem}

\begin{proof}
Let $E$ be the exceptional divisor of the blow-up 
$(X,D)^2 \rightarrow X \times X$, the locus in 
$(X,D)^2$ where $X$ expands along $D$ and both points are in the expansion. The regular embedding 
$$\Delta_R \colon R \rightarrow (X,D)^2$$ of codimension 
$\dim X$ is the composition of the regular embeddings 
$$\iota_{R,E} \colon R \rightarrow  E\ \  \ \text{and} \ \ \  \iota_E \colon E \rightarrow (X,D)^2$$
of codimensions $\dim X -1$ and $1$ respectively.
Remembering the projections to $D$ of the two points contained in the expansion defines a morphism $E \rightarrow D \times D$, and we have a fiber diagram 
\begin{equation}\label{eq_delta_D}
\begin{tikzcd}
R
\arrow[r,"\iota_{R,E}"]
\arrow[d]
&
E
\arrow[d]\\
D
\arrow[r,"\Delta_D"]& D \times D \,.
\end{tikzcd}\end{equation}


To prove Theorem \ref{thm_splitting_2}, we first show 
\begin{multline}
\label{eq_splitting_proof}
\iota_E^! [\shP_{\Gamma \setminus e}(X,D)]^\virt
=\\
\sum_{\sigma=(\gamma_1,\gamma_2)\in \partial \overline{\Omega}_{\Gamma \setminus e}}
\frac{m(\sigma)}{|\Aut(\sigma)|}
\Psi_{\sigma, *} \Delta_\sigma^!([\shP_{\gamma_1}(X,D)]^\virt
\times [\shR_{\gamma_2}(D,N_{D|X})]^\virt)\,.
\end{multline}
The proof of \eqref{eq_splitting_proof} is parallel to the proof of the nodal degeneration formula
given in detail in \S \ref{section_degeneration_technical} - \ref{section_degeneration_proof}.
Let $\overline{\Gamma \setminus e}$ be the graph without edges
obtained by contracting all edges of $\Gamma \setminus e$,
as in Definition \ref{def_contraction}.
The analogue of \eqref{eq_splitting_proof} 
with $\Gamma \setminus e$ replaced by $\overline{\Gamma \setminus e}$
holds by the usual gluing formula in relative Gromov--Witten theory describing the virtual divisor of the moduli stack of relative stable maps where the target expands in terms of moduli stacks of rubber stable maps \cite{JunLi}. 
To obtain \eqref{eq_splitting_proof}, we apply 
$\iota_{\Gamma \setminus e}^!$ to the latter formula and restrict to the locus where the $\Gamma \setminus e$-marked nodes are not mapped to the singular locus of the expanded targets.

Theorem \ref{thm_splitting_2} follows by applying 
$\iota_{R,E}^!$ to both sides of \eqref{eq_splitting_proof}.
On the left side, we use  
$$\iota_{R,E}^! \iota_E^! =\Delta_R^!$$ by functoriality of the Gysin pull-back \cite[Theorem 6.5]{Fulton}. On the right side, we use  $\iota_{R,E}^!=\Delta_D^!$ by 
\eqref{eq_delta_D} and the fact that $\Delta_D^!$
commutes with $\Psi_{\sigma,*}$ and $\Delta_\sigma^!$
by general properties of the Gysin pullback
(\cite[Theorem 6.2 a)]{Fulton}, \cite[Theorem 6.4]{Fulton}),
similar to Lemmas \ref{lem_alpha} and \ref{lem_beta}.
\end{proof}

We state below the splitting formula in the final and most explicit form.

\begin{theorem} \label{thm_splitting_3}
For every $(X,D)$-valued stable graph $\Gamma$ and every edge $e$ of $\Gamma$, 
\begin{multline*} [\shP_\Gamma(X,D)]^\virt = \Delta^! [\shP_{\Gamma \setminus e}(X,D)]^\virt\\   -  
\sum_{\sigma=(\gamma_1,\gamma_2)\in \partial \overline{\Omega}_{\Gamma \setminus e}}
\frac{m(\sigma)}{|\Aut(\sigma)|}
\Psi_{\sigma, *} \Delta_\sigma^!([\shP_{\gamma_1}(X,D)]^\virt
\times \Delta_D^! [\shR_{\gamma_2}(D,N_{D|X})]^\virt)\,.
\end{multline*}
\end{theorem}

\begin{proof}
By Theorem \ref{thm_splitting_1}, we have 
$[\shP_\Gamma(X,D)]^\virt=\Delta_{\rel}^![\shP_{\Gamma \setminus e}(X,D)]^\virt$. By \eqref{eq_excess}, we have 
$$\Delta_{\rel}^! = \Delta^! -\Delta_R^!\, .$$
Finally, 
$\Delta_R^! [\shP_{\Gamma \setminus e}(X,D)]^\virt$
is given by Theorem \ref{thm_splitting_2}.
\end{proof}

\begin{example}
We illustrate Theorem \ref{thm_splitting_3} with a basic example.
Let $X=\PP^1$ and $D=\{0\} \subset \PP^1$. Let $\Gamma$ be the graph with two vertices $v_1$ and $v_2$
(of genus   $\g(v_1)=\g(v_2)=0$, class $\beta(v_1)=\beta(v_2)=1 \in \Z_{\geq 0} =H_2^+(\PP^1)$,
and each with a relative leg of relative multiplicity $1$)
connected by an edge $e$ as illustrated in Figure \ref{Fig: nodetozero}.

\begin{figure}
\resizebox{0.9\linewidth}{!}{\input{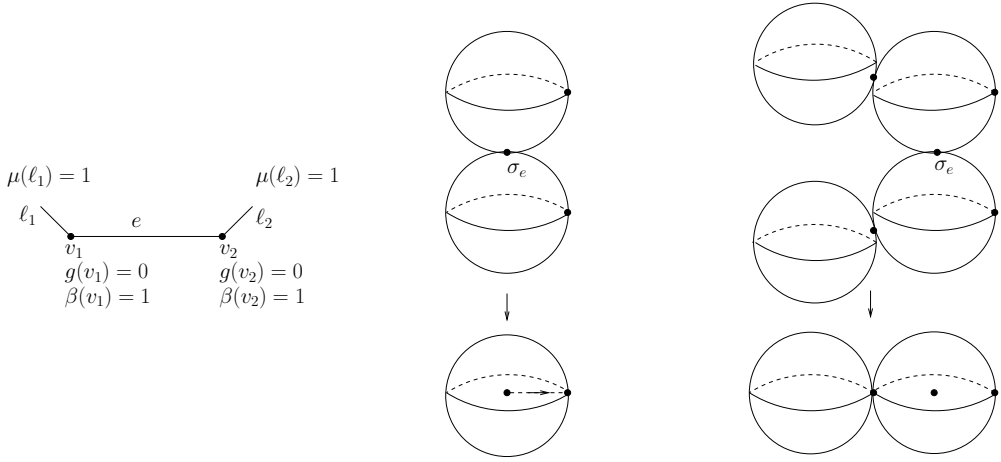}}
\caption{The $(\PP^1,0)$-valued stable graph $\Gamma$ and a $\Gamma$-marked relative stable map bubbling as the image of the node $\sigma_e$ tends to $D=\{0\}$.}
\label{Fig: nodetozero}
\end{figure}

\noindent A general element of $\shP_{\Gamma}(\PP^1,D)$ has a domain curve consisting of two $\PP^1$ components
connected by a node 
$\sigma_e$, and each of these $\PP^1$-components is mapped isomorphically to the target $X=\PP^1$. Such relative stable map is uniquely determined by the position of the image of $\sigma_e$ in $\PP^1$. When the image of $\sigma_e$ tends to $D=\{0\}$, the target expands, and the limit stable map is unique. Hence, we have an isomorphism 
$$\shP_{\Gamma}(\PP^1,0) \simeq \PP^1$$
given by the image of $\sigma_e$.

On the other hand, $\shP_{\Gamma \setminus e}(\PP^1,0)$ is a moduli space of disconnected relative stable maps. 
A general element of $\shP_{\Gamma \setminus e}(\PP^1,0)$ has a domain curve consisting of two disjoint $\PP^1$-components, each  mapping isomorphically to $\PP^1$ and each with an extra marked point. These two extra marked points $p_1$ and $p_2$ correspond to the interior legs of 
$\Gamma \setminus e$ created by splitting the edge $e$ of $\Gamma$. When the image of $p_1$ or $p_2$ tends to $0$, the target expands. In other words, taking the image of 
$(p_1,p_2)$ defines an isomorphism $\shP_{\Gamma \setminus e}(\PP^1,0) \simeq (\PP^1,0)^2$ with the moduli space $(\PP^1,0)^2$ of pairs of ordered points in 
$(\PP^1,0)$. Via the isomorphisms $$\shP_\Gamma(\PP^1,0)=\PP^1 \ \  \ \text{and} \ \ \ \shP_{\Gamma \setminus e}(\PP^1,0)=(\PP^1,0)^2\,,$$ 
we can view $\shP_\Gamma(\PP^1,0)$ as the strict transform 
$\Delta_{\rel}$ of the diagonal $$\Delta=\PP^1 \subset \PP^1 \times \PP^1\,.$$
The splitting formula of Theorem \ref{thm_splitting_3} reduces to the expression \eqref{eq_R} of
$\Delta_{\rel}$ as the difference between the total transform of $\Delta$ and the 
correction term $R$, which here is the exceptional divisor of 
the blow-up $(\PP^1,0)^2 \rightarrow \PP^1 \times \PP^1$ at the point $(0,0)$,
as in Figure \ref{Fig: mod}. Concretely, 
$R$ is a moduli space of rubber maps from the disjoint union of two $1$-marked degree $1$ copies of $\PP^1$ to a bubble. The relative position of these two marked points up to the 
scaling action of $\mathbb{G}_m$ on the bubble induces an isomorphism $R \simeq \PP^1$.
\end{example}

\subsection{Nodal rubber calculus}
\label{section_nodal_rubber}

A $(\bP, D_0 \cup D_\infty)$-valued stable graph 
$\gamma'$ is of {\em multiple fiber type} if $\gamma'$ consists of disconnected vertices $v$
satisfying $$g(v)=0\, , \ \  n(v)=2, \ \ \rho_*\beta(v)=0\, ,$$
where  $\rho \colon \bP \rightarrow D$ is the
projection.

\begin{definition}
For every $(\bP, D_0 \cup D_\infty)$-valued stable graph 
$\gamma'$ {\em not of multiple fiber type}, we define a $(D, N_{D|X})$-valued stable rubber graph  $\rho_* \gamma'$ by the following procedure.
For every vertex $v$ 
of $\gamma'$, we replace  the class $\beta(v) \in H_2^+(\bP)$ by the class $\rho_{*}\beta(v) \in H_2^+(D)$.

Furthermore, for every $(D, N_{D|X})$-valued stable rubber graph $\gamma$, let $\Xi_\gamma$  be the set of  
$(\bP, D_0 \cup D_\infty)$-valued stable graphs 
$\gamma'$ such that $\rho_* \gamma'=\gamma$.
\hspace*{\fill} $\Diamond$
\end{definition}
\vspace{8pt}

Let $\gamma$ be a $(D, N_{D|X})$-valued stable rubber graph.
For every $\gamma' \in \Xi_\gamma$, we have a natural forgetful morphism 
\[ \tau_{\gamma'} \colon \shP_{\gamma'}(\bP, D_0 \cup D_\infty) \rightarrow \shR_{\gamma}(D,N_{D|X}) \,,\]
obtained by viewing a relative map as a rubber map and stabilizing if necessary. The morphism $\tau_{\gamma'}$
is well-defined because components of the domain curves contracted by the stabilization are $\PP^1$-covers of $\PP^1$-fibers of 
$\rho$ fully ramified over $D_0$ and $D_\infty$, and in particular do not contain and are not adjacent to a $\gamma'$-marked node.

\begin{lemma} \label{lem_rigidification}
Let $\gamma$ be a $(D, N_{D|X})$-valued stable rubber graph and $\ell$ an interior leg of $\gamma$.
For every $\gamma' \in \Xi_\gamma$, let 
$$\ev_{\ell} \colon \shP_{\gamma'}(\bP, D_0 \cup D_\infty) \rightarrow \bP$$ be the evaluation morphism at the marked point marked by $\ell$.
Then,
\begin{align*} [\shR_\gamma(D,N_{D|X})]^\virt 
&= \sum_{\gamma' \in \Xi_\gamma} \tau_{\gamma',*}(\ev_{\ell}^*([D_0]) \cap [\shP_{\gamma'}(\bP, D_0 \cup D_\infty)]^\virt)\\
&= \sum_{\gamma' \in \Xi_\gamma} \tau_{\gamma',*}(\ev_{\ell}^*([D_\infty]) \cap [\shP_{\gamma'}(\bP, D_0 \cup D_\infty)]^\virt)
\,.\end{align*}
\end{lemma}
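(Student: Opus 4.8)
The plan is to reduce to the standard rubber rigidification formula (the ``rubber calculus'' of \cite{MP}, see also \cite{JunLi}), which in the edge-free case $\overline{\gamma}$ states precisely that
\[
[\oM_{\overline\gamma}^\sim(D,N_{D|X})]^\virt
= \sum_{\gamma'\in\Xi_{\overline\gamma}} \tau_{\gamma',*}\!\left(\ev_\ell^*([D_0])\cap[\oM_{\gamma'}(\bP,D_0\cup D_\infty)]^\virt\right),
\]
and similarly with $[D_0]$ replaced by $[D_\infty]$. The point of this formula is that on a rubber target the ``rubber scaling'' $\mathbb{G}_m$-action is rigidified by pinning one interior marked point to the zero (resp.\ infinity) section of a chosen bubble; the sum over $\gamma'\in\Xi_{\overline\gamma}$ records all the ways of distributing the curve class of $\overline\gamma$ (which lives in $H_2^+(D)$) among the bubbles of $\bP_l$ (which is how a $(\bP,D_0\cup D_\infty)$-valued graph $\gamma'$ with $\rho_*\gamma'=\overline\gamma$ arises), and $\tau_{\gamma'}$ contracts the extra fully-ramified $\PP^1$-fiber components produced this way. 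The equality of the $[D_0]$-version and the $[D_\infty]$-version is a standard consequence of the $\mathbb{G}_m$-rigidification being independent of which section one rigidifies against, or alternatively follows from $[D_0]$ and $[D_\infty]$ being rationally equivalent after pulling back along $\ev_\ell$ up to a boundary term that dies because $\ell$ is an interior leg.

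First I would establish the edge-free statement as just recalled, citing \cite{MP, JunLi} (it is exactly the form stated there, once one observes that the forgetful/stabilization morphism $\tau_{\gamma'}$ here is the one appearing there and is well-defined for the reason noted in the text: the contracted components are fully ramified covers of $\rho$-fibers over $D_0,D_\infty$ and hence neither contain nor are adjacent to a $\gamma'$-marked node). Next I would pass from $\overline\gamma$ to $\gamma$ by applying the Gysin pullback $\iota_\gamma^!$ along $\iota_\gamma\colon\ooMM_\gamma\to\ooMM_{\overline\gamma}$ to both sides. On the left, $\iota_\gamma^![\oM_{\overline\gamma}^\sim(D,N_{D|X})]^\virt = [\oM_\gamma^\sim(D,N_{D|X})]^\virt$ by the definition \eqref{eq_def_rubber}, and restricting to the open-and-closed locus where no $\gamma$-marked node hits the singular locus of the target gives $[\shR_\gamma(D,N_{D|X})]^\virt$. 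On the right, one commutes $\iota_\gamma^!$ past the proper pushforward $\tau_{\gamma',*}$ and past the cap with $\ev_\ell^*([D_0])$: commuting past $\tau_{\gamma',*}$ uses \cite[Thm 6.2(a)]{Fulton} applied to the evident Cartesian square (exactly as in Lemma \ref{lem_alpha}), and commuting past the evaluation-class cap uses that $\ev_\ell$ factors through the target and is compatible with the node-marking structure, so $\ev_\ell^*([D_0])\cap\iota_\gamma^!(-) = \iota_\gamma^!(\ev_\ell^*([D_0])\cap(-))$. Then $\iota_{\gamma'}^![\oM_{\gamma'}(\bP,D_0\cup D_\infty)]^\virt=[\oM_{\gamma'}(\bP,D_0\cup D_\infty)]^\virt$ by \eqref{eq_def_relative}, and restricting to the locus where no $\gamma'$-marked node hits the singular locus gives $[\shP_{\gamma'}(\bP,D_0\cup D_\infty)]^\virt$. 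Finally one checks $\Xi_\gamma=\Xi_{\overline\gamma}$ as index sets in the relevant sense (the edge data of $\gamma$ is carried along unchanged, since contracting edges of $\gamma'$ recovers $\gamma'$'s contraction, i.e.\ the bijection $\gamma'\mapsto\rho_*\gamma'$ is compatible with the contraction operation on both sides), so the sum on the right becomes exactly $\sum_{\gamma'\in\Xi_\gamma}\tau_{\gamma',*}(\ev_\ell^*([D_0])\cap[\shP_{\gamma'}(\bP,D_0\cup D_\infty)]^\virt)$.

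The main obstacle I expect is the bookkeeping in the compatibility step: one must verify carefully that the stabilization morphism $\tau_{\gamma'}$, the evaluation morphism $\ev_\ell$, and the Gysin pullback $\iota_\gamma^!$ all interact through honest Cartesian diagrams of the moduli stacks involved — in particular that the ``no $\gamma$-marked node in the singular locus'' condition on the rubber side pulls back to exactly the ``no $\gamma'$-marked node in the singular locus'' condition on the $\bP$-side under the correspondence $\gamma'\in\Xi_\gamma$, so that the restriction of virtual classes really does match term by term. This is essentially the rubber analogue of Lemma \ref{lem_key} together with the observation already recorded in the text that the contracted $\PP^1$-fiber components are disjoint from $\gamma'$-marked nodes; given those inputs the commutation of Gysin pullbacks with pushforward and with evaluation-class caps is routine via \cite[Ch.\ 6]{Fulton}, exactly as in the proofs of Lemmas \ref{lem_alpha}, \ref{lem_beta}, and \ref{lem_decomposition}.
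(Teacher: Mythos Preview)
Your overall strategy---start from the edge-free rigidification lemma of \cite{MP}, apply $\iota_\gamma^!$, and commute it past the pushforward $\tau_*$ and the cap product with $\ev_\ell^*([D_0])$---is exactly the paper's approach. But there is a genuine gap in how you account for the sum over $\Xi_\gamma$.

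You write that ``$\Xi_\gamma=\Xi_{\overline\gamma}$ as index sets in the relevant sense''. This is not correct: in the edge-free case $\Xi_{\overline\gamma}$ contains (up to empty moduli) a \emph{single} element $\overline\gamma'$, because for each vertex $v$ of $\overline\gamma$ the lift of $\beta(v)\in H_2^+(D)$ to $H_2^+(\bP)$ is uniquely determined by the contact orders of the relative legs adjacent to $v$. By contrast, $\Xi_\gamma$ can have many elements: once $\gamma$ has internal edges, a vertex may carry few or no relative legs, and then there are several lifts $\beta'(v)\in H_2^+(\bP)$ of its class. The sum over $\Xi_\gamma$ therefore does \emph{not} come from a sum already present at the $\overline\gamma$ level; it is produced by the Gysin pullback itself. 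Concretely, the fiber product of $\shP_{\overline\gamma'}(\bP,D_0\cup D_\infty)$ with $\ooMM_\gamma$ over $\ooMM_{\overline\gamma}$, restricted to the locus where no $\gamma$-marked node lies in the singular locus, decomposes as $\bigsqcup_{\gamma'\in\Xi_\gamma}\shP_{\gamma'}(\bP,D_0\cup D_\infty)$: a $\gamma$-marking on the stabilized rubber curve lifts to a $\gamma'$-marking on the relative stable map by reading off, for each vertex $v$, the actual class in $H_2^+(\bP)$ of the component it marks. This decomposition is the missing step in your argument.

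There is a second, more technical point you also skip. After the decomposition, you need $\iota_\gamma^!$ (defined via $\ooMM_\gamma\to\ooMM_{\overline\gamma}$) to agree with $\iota_{\gamma'}^!$ (defined via $\ooMM_{\gamma'}\to\ooMM_{\overline{\gamma'}}$) on each piece $\shP_{\gamma'}$. These are Gysin maps for two different regular embeddings, and one must check that the pulled-back normal bundles coincide on $\shP_{\gamma'}$. They do, because both are $\bigoplus_{\{h,h'\}\in E_\gamma}\LL_h^\vee\otimes\LL_{h'}^\vee$ and the stabilization $\tau_{\gamma'}$ does not affect cotangent lines at the half-edges of $\gamma$ (the contracted components are fully ramified $\PP^1$-fiber covers, disjoint from and non-adjacent to the $\gamma'$-marked nodes). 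The paper makes this normal-bundle comparison explicit; your proposal does not mention it, and the sentence ``$\iota_{\gamma'}^![\oM_{\gamma'}(\bP,D_0\cup D_\infty)]^\virt=[\oM_{\gamma'}(\bP,D_0\cup D_\infty)]^\virt$'' is not well-formed as written (you presumably mean $\iota_{\gamma'}^!$ applied to the $\overline{\gamma'}$-level class).
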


\begin{proof}
We prove the result for $\ev_\ell^*([D_0])$. The result for 
$\ev_\ell^*([D_\infty])$ will follow by symmetry.
To simplify the notation, we write 
$\shR_\gamma$ for $\shR_\gamma(D,N_{D|X})$ and
$\shP_{\gamma'}$ for $\shP_{\gamma'}(\bP,D_0 \cup D_\infty)$.

Let $\overline{\gamma}$ be the $(D, N_{D|X})$-valued stable rubber graph without edges obtained from $\gamma$ by contracting all edges, as in Definition 
\ref{def_contraction}. There exists a unique $\overline{\gamma}' \in \Xi_{\overline{\gamma}}$ such that the moduli stack $\shP_{\overline{\gamma}'}$
is not empty. Indeed, for every vertex $v$ of $\overline{\gamma}$ with class 
$\beta(v)\in H_2^+(D)$, there exists a unique class 
$\beta'(v) \in H_2^+(\mathbb{P})$ satisfying $$\rho_* \beta'(v)=\beta(v)\, ,$$ and 
such that the intersections numbers 
$\beta(v) \cdot D_0$ and $\beta(v) \cdot D_\infty$ are equal to the sums of relative multiplicities of legs of $\overline{\gamma}$ relative to $D_0$ and $D_\infty$ adjacent to $v$.

According to the rigidification result \cite[Lemma 2]{MP}, we have
\begin{align} \label{eq_lem_mp}
[\shR_{\overline{\gamma}}]^\virt 
=  \tau_{\overline{\gamma}',*}(\ev_{\ell}^*([D_0]) \cap [\shP_{\overline{\gamma}'}]^\virt)
\,.\end{align}
We apply $\iota_\gamma^!$ to both sides of \eqref{eq_lem_mp},
where 
$\iota_\gamma \colon \ooMM_\gamma  \rightarrow \ooMM_{\overline{\gamma}}$ defines the $\gamma$-marking.
By definition, the restriction of 
$\iota_{\gamma}^!  [\shR_{\overline{\gamma}}]^\virt$ to $\shR_{\gamma}$ is $ [\shR_{\gamma}]^\virt $.
It remains to compute the restriction to $\shR_{\gamma}$ of 
\[\iota_\gamma^! \tau_{\overline{\gamma}',*}(\ev_{\ell}^*([D_0]) \cap [\shP_{\overline{\gamma}'}]^\virt)\,.\]

The diagram
\[
\begin{tikzcd}[column sep=1em]
\bigsqcup_{\gamma'\in \Xi_\gamma} \shP_{\gamma'}  \arrow[r] \arrow[d, "\tau_{\gamma'}"'] &
\shP_{\overline{\gamma}'}
\arrow[d,"\tau_{\overline{\gamma}'}"] &
 \\
\shR_\gamma
\arrow[r] \arrow[d,"\epsilon_\gamma"'] &
\shR_{\overline{\gamma}}
\arrow[d,"\epsilon_{\overline{\gamma}}"] \\
\ooMM_\gamma  \arrow[r,"\iota_{\gamma}"]& \ooMM_{\overline{\gamma}}
\end{tikzcd}\]
is the restriction of a fiber diagram to the locus where the 
$\gamma$-marked nodes are not mapped to the singular locus of the expanded target. Indeed, a 
$\gamma$-marking of a rubber stable map
obtained by the forgetful morphism from a relative stable map naturally defines a $\gamma'$-marking of this relative stable map: for every vertex $v$, define 
$\beta(v) \in H_2^+(\bP)$ as the class of the component of the relative stable map marked by the vertex $v$.

By compatibility of the Gysin pull-back with proper push-forward
\cite[Theorem 6.2 a)]{Fulton}, we deduce that, after restriction to $\shR_\gamma$,
\[ 
\iota_\gamma^! 
\tau_{\overline{\gamma}',*} (\ev_{\ell}^*([D_0]) \cap [\shP_{\overline{\gamma}'}]^\virt)
= \sum_{\gamma' \in \Xi_\gamma}
\tau_{\gamma',*}(\iota_\gamma^!(\ev_{\ell}^*([D_0]) \cap [\shP_{\overline{\gamma}'}]^\virt))|_{\shP_{\gamma'}}\,.\]

On the other hand, for every 
$\gamma' \in \Xi_\gamma$, we have in restriction to 
$\shP_{\gamma'}$
\[ \tau_{\gamma',*}(\ev_\ell^*([D_0]) \cap [\shP_{\gamma'}]^\virt)
=\tau_{\gamma',*}\iota_{\gamma'}^!(\ev_\ell^{*}([D_0]) 
\cap [\shP_{\overline{\gamma}'}]^\virt)\,,
\]
where $\iota_{\gamma'}^!$ is defined using the diagram
\[
\begin{tikzcd}[column sep=1em]
\shP_{\gamma'}  \arrow[r] \arrow[d,"\epsilon_{\gamma'}"'] &
\shP_{\overline{\gamma}'}
\arrow[d,"\epsilon_{\overline{\gamma}'}"] &
 \\
\ooMM_\gamma
\arrow[r,"\iota_{\gamma'}"] &
\ooMM_{\overline{\gamma}'}\,.
\end{tikzcd}\]

Therefore, it is enough to show for every 
$\gamma'\in \Xi_\gamma$ that, after restriction to 
$\shP_{\gamma'}$, 
\[ (\iota_\gamma^!(\ev_{\ell}^*([D_0]) \cap [\shP_{\overline{\gamma}'}]^\virt))|_{\shP_{\gamma'}} 
= \iota_{\gamma'}^! (\ev_{\ell}^*([D_0]) \cap [\shP_{\overline{\gamma}'}]^\virt)\,.\]
The claim follows from the existence, implied by 
\eqref{eq_normal}
and the fact that the components of the domains curves contracted by the stabilization do not contain and are not adjacent to a $\gamma'$-marked node,
of a canonical isomorphism between the pullbacks of normal bundles
\[ (\tau_{\gamma'}\circ \epsilon_\gamma)^* N_{\iota_\gamma} 
\simeq \epsilon_{\gamma'}^{*}N_{\iota_{\gamma'}} \]
on $\shP_{\gamma'}$.
\end{proof}

\subsection{Nodal relative in terms of absolute}
\label{section_nodal_relative_absolute}

We are now in position to prove the main reconstruction result for nodal relative Gromov-Witten theory.
\begin{theorem}[\textbf{Theorem \ref{thm_intro_splitting}}]
\label{thm_splitting}
Let $X$ be a smooth projective variety over $\C$ and $D \subset X$ a smooth divisor.
Then, the nodal relative Gromov--Witten invariants of $(X,D)$ can be effectively reconstructed from the Gromov--Witten invariants of $X$, the Gromov--Witten invariants of $D$, and the restriction map $H^\star(X) \rightarrow H^\star(D)$. 
\end{theorem}
\begin{proof}
We prove by induction on $k$ the following claim:

\vspace{5pt}
\noindent {\em For every $k \in \Z_{\geq 0}$, for every pair 
$(X,D)$, and for every $(X,D)$-valued stable graph $\Gamma$
with $|E_\Gamma|=k$ edges, the nodal relative Gromov--Witten invariants of $(X,D)$
of type $\Gamma$ can be reconstructed from the Gromov--Witten invariants of $X$, the Gromov--Witten invariants of $D$, and the restriction map $H^\star(X) \rightarrow H^\star(D)$.}
\vspace{5pt}

\noindent In the base case $k=0$, the claim holds by 
\cite[Theorem 2]{MP}. 

If $k>0$, let $e$ be an edge of $\Gamma$, obtained by gluing half-edges $h$ and $h'$. By the splitting formula of Theorem 
\ref{thm_splitting_3} and the Künneth decompositions of the classes of the diagonals
$$\Delta \colon X \rightarrow X\times X\, , \ \ \ \Delta_\sigma \colon D^{\ell(\sigma)} \rightarrow (D \times D)^{\ell(\sigma)}\,, \ \ \
\Delta_D \colon D \rightarrow D \times D\, ,$$
the nodal relative Gromov--Witten invariants of $(X,D)$ of type $\Gamma$ can be computed in terms of nodal relative Gromov--Witten invariants of $(X,D)$
of type $\Gamma \setminus e$ and $\gamma_1$,
and the nodal rubber Gromov--Witten invariants of 
$(D,N_{D|X})$ of type $\gamma_2$, 
where $\gamma_1$ and $\gamma_2$ run over all
boundary splittings $(\gamma_1,\gamma_2)\in \partial 
\overline{\Omega}_{\Gamma \setminus e}$.

A $(D, N_{D|X})$-valued stable rubber graph of the form $\gamma_2$ contains the interior legs  $h$ and $h'$ coming from splitting the edge $e$ of $\Gamma$. 
Hence, we can apply Lemma \ref{lem_rigidification} to 
$\gamma_2$ to deduce that the nodal rubber Gromov--Witten invariants of $(D, N_{D|X})$ of type $\gamma_2$
can be computed in terms of the nodal relative Gromov--Witten invariants of $(\bP,D_0 \cup D_\infty)$ of types $\gamma_2' \in \Xi_{\gamma_2}$. Here, we are using the fact that the 
forgetful morphism $\tau_{\gamma_2'}$ in Lemma \ref{lem_rigidification} is compatible with cotangent lines (no marked point lies on a component contracted by stabilization from relative to rubber).

Hence, the nodal Gromov--Witten invariants of $(X,D)$ of type 
$\Gamma$ can be computed in terms of the nodal relative Gromov--Witten invariants of $(X,D)$ of type $\Gamma \setminus e$ and
$\gamma_1$, and the nodal relative Gromov--Witten invariants of
$(\bP,D_0 \cup D_\infty)$ of type $\gamma_2'$. As $|E_{\Gamma \setminus e}|$, $|E_{\gamma_1}|<k$, by the induction hypothesis, the nodal relative Gromov--Witten invariants of $(X,D)$ of type $\Gamma \setminus e$ and 
$\gamma_1$ can be effectively reconstructed from the Gromov--Witten invariants of $X$, the Gromov--Witten invariants of $D$, and the restriction map 
$H^\star(X) \rightarrow H^\star(D)$. 
As
$|E_{\gamma_2'}|<k$, by the induction hypothesis,
the nodal relative Gromov--Witten invariants of $(\bP,D_0 \cup D_\infty)$ of types $\gamma_2'$ can be reconstructed from the Gromov--Witten invariants of $\bP$, the Gromov--Witten invariants of $D$ and the class $c_1(N_{D|X}) \in H^2(D)$.
Finally, the Gromov--Witten invariants of $\bP$ can be reconstructed from the Gromov--Witten invariants of $D$ and 
$c_1(N_{D|X}) \in H^2(D)$ by \cite[Theorem 1]{MP}.
\end{proof}

\section{Cohomology of complete intersections}
\label{sec:gw_ci1}

We review Deligne's results on the monodromy of complete intersections in \S \ref{sec_monodromy} and present  basic 
aspects of the invariant theory of the
orthogonal and symplectic groups in 
 \S \ref{section_invariant_theory} - \ref{sec_matrix_diagonal}. These results are applied in the next section \S \ref{sec:gw_ci2} to the study of the Gromov--Witten theory of complete intersections.

\subsection{Monodromy of complete intersections}
\label{sec_monodromy}
Let $m$ be a positive integer and $(d_1,\dots,d_r)$ a tuple of
$r \geq 1$
positive integers. Let 
\[ U \subset \prod_{i=1}^r \PP(H^0(\PP^{m+r},\cO(d_i)))\] 
be the open subset 
parameterizing smooth $m$-dimensional complete intersections of degrees 
$(d_1,\dots,d_r)$ in $\PP^{m+r}$. 
 We fix a point $u \in U$,
and we denote by $X$ the corresponding smooth complete intersection in 
$\PP^{m+r}$.

By the Lefschetz hyperplane theorem, the restriction map in cohomology 
\[ H^i(\PP^{m+r}) \rightarrow H^i(X)\] 
is an isomorphism for $i \neq m$ and $0 \leq i \leq 2m$. We have a decomposition 
\[ H^m(X)=H^m(\PP^{m+r}) \oplus H^m(X)_{\prim} \]
where $H^m(X)_{\prim}$ is the primitive cohomology, the subspace of $H^m(X)$
annihilated by the hyperplane class.
A class $\gamma \in H^{*}(X)$ is \emph{simple} if $\gamma$ lies in the image of 
$H^{*}(\PP^{m+r})$ and \emph{primitive} if $\gamma \in H^m(X)_{\prim}$.

The fundamental group $\pi_1(U,u)$ of $U$ based at $u$ acts on the primitive cohomology 
$H^\star(X)_{\prim}$. The (algebraic) \emph{monodromy group} $G$ is,
by definition, the Zariski closure of the image of $\pi_1(U,u)$ in 
the algebraic group $\mathrm{Aut} (H^\star(X)_{\prim}\otimes \C)$ of linear automorphisms of $H^\star(X)_{\prim} \otimes \C$.

We review below the known explicit description of the monodromy group $G$ depending on the parity of the dimension $m$ of $X$.
If the dimension $m$ of $X$ is odd, then the intersection pairing 
defines a skew-symmetric non-degenerate bilinear form $\omega$ on 
$H^m(X)$. 
Moreover, as the odd cohomology of $\PP^{m+r}$ is zero, the entire cohomology 
$H^m(X)$ is primitive. 
Let $\mathrm{Sp}(V)$ be the complex symplectic group of 
automorphisms of $$V := H^m(X)_{\prim}\otimes \C=H^m(X) \otimes \C$$ endowed with $\omega$.
As the monodromy preserves $\omega$, we necessarily have $G \subset \mathrm{Sp}(V)$.

\begin{proposition} \label{prop_big_monodromy_odd}
If $\mathrm{dim}(X)=m$ is odd, then the monodromy group $G$ acting on the primitive cohomology $V$ is as large as possible:  $G=\mathrm{Sp}(V)$.
\end{proposition}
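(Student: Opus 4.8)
The plan is to reduce Proposition \ref{prop_big_monodromy_odd} to the classical Picard--Lefschetz theory for Lefschetz pencils, exactly as in the references \cite{MR855873, MR601520, deligne1973groupes} cited in the introduction. First I would choose a generic pencil of hypersurfaces of degree $d_r$ cutting out a Lefschetz pencil on the $(m+1)$-dimensional complete intersection $Y$ of degrees $(d_1,\dots,d_{r-1})$ in $\PP^{m+r}$, so that the members of the pencil are the complete intersections parametrized by a line in $U$ (after fixing the first $r-1$ equations). Since $U$ is connected and the monodromy group only depends on the image of $\pi_1$, and since restricting to a generic line can only shrink the image, it suffices to prove that already the monodromy of this one-parameter family is Zariski-dense in $\mathrm{Sp}(V)$. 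By the Lefschetz theory, $H^m(X)_{\prim} = H^m(X)$ (as $m$ is odd) is spanned by the vanishing cycles $\delta_s$ attached to the critical values $s$ of the pencil, and the local monodromy around each $s$ is the symplectic transvection $T_{\delta_s}(x) = x \pm \langle x,\delta_s\rangle \delta_s$.

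The key steps, in order, are then: (i) show that the vanishing cycles span $V$; this is the statement that $H^m(X)$ has no nonzero part invariant under all the transvections, equivalently that the primitive cohomology is irreducible as a $\pi_1$-module, which follows from the irreducibility of the monodromy action for a Lefschetz pencil together with the fact that the total space of the universal family is irreducible; (ii) show that all vanishing cycles lie in a single orbit under the monodromy group, up to sign; this is the classical connectedness-of-the-discriminant argument --- the critical values form a single orbit because the set of singular members is an irreducible divisor in the parameter space and $\pi_1$ of its complement acts transitively on a fiber of the associated covering; (iii) invoke the group-theoretic lemma (due to Kazhdan--Margulis, as used by Deligne) that a subgroup of $\mathrm{Sp}(V)$ generated by a single orbit of symplectic transvections whose directions span $V$ is Zariski-dense in $\mathrm{Sp}(V)$. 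Combining (i), (ii), (iii) gives $G = \mathrm{Sp}(V)$.

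The main obstacle, or rather the point requiring the most care, is verifying the hypotheses that make the Lefschetz pencil argument applicable: one must ensure that for the given degrees $(d_1,\dots,d_r)$ with $m$ odd there actually exists a Lefschetz pencil (generic existence is standard but uses $m\geq 1$), that the vanishing cycles are nonzero --- which can fail precisely in the excluded even-dimensional intersection-of-two-quadrics case but is automatic here since $m$ is odd --- and that the irreducibility in step (i) holds, for which one cites the irreducibility of the universal complete intersection over $U$ and the surjectivity onto $\PP^1$ of the relevant incidence variety. In practice I would not reprove Picard--Lefschetz theory but simply cite \cite[Th\'eor\`eme 4.4.1]{deligne1973groupes} (or the equivalent statements in \cite{MR855873, MR601520}), after checking that the degrees under consideration are not among the finitely many exceptions listed there; since $m$ is odd, the only classical exception (the even intersection of two quadrics) does not arise, so the cited big-monodromy theorem applies verbatim and yields $G=\mathrm{Sp}(V)$.
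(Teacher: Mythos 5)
Your proposal is correct and follows essentially the same route as the paper: the paper's proof is precisely the reduction to a Lefschetz pencil of degree-$d_r$ sections of the $(m+1)$-dimensional complete intersection of multidegree $(d_1,\dots,d_{r-1})$, followed by a citation of Deligne's big-monodromy theorem (Thm 4.4.1), which in the odd-dimensional (symplectic) case has no exceptional list. Your steps (i)--(iii) simply unwind the standard internal proof of that cited theorem (vanishing cycles span, single conjugacy orbit, Kazhdan--Margulis), so they are a more detailed version of the same argument rather than a different one.
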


\begin{proof}
The result follows from \cite[Theorem 4.4.1]{weil2} applied to a Lefschetz pencil 
of sections of a complete intersection of dimension $m+1$ and multidegree 
$(d_1,\dots, d_{r-1})$ by degree $d_r$ hypersurfaces (such Lefschetz pencils exists by \cite[XVII, Theorem 2.5.2]{deligne1973groupes}).
\end{proof}

On the other hand, if the dimension $m$ of $X$ is even, then the intersection pairing 
defines a symmetric non-degenerate bilinear form $\alpha$ on 
$H^m(X)$. The primitive cohomology 
$H^m(X)_{\prim}$ is the orthogonal with respect to $\alpha$ of 
the image of the restriction map 
$H^m(\PP^{m+r}) \rightarrow H^m(X)$. 
We also denote by $\alpha$ the symmetric non-degenerate bilinear form 
obtained by restricting $\alpha$ to $H^m(X)_{\prim}$.
Let $O(V)$ be the orthogonal group of 
automorphisms of 
$$V := H^m(X)_{\prim} \otimes \C$$ endowed with $\alpha$. 
As the monodromy group preserves $\alpha$, we necessarily have $G \subset O(V)$.

\begin{proposition} \label{prop_big_monodromy_even}
If $\mathrm{dim}(X)=m$ is even, then the monodromy group $G$
acting on the primitive cohomology $V$ is as large as possible, $G=O(V)$,
except if $X$ is a cubic surface or a complete intersection of two quadrics.
Furthermore, 
\begin{itemize}
\item[(i)] if $X$ is a cubic surface, then $G=W(E_6)$,
\item[(ii)] if $X$ is a complete intersection of two quadrics, then $G=W(D_{m+3})$,
    \end{itemize}
where $W(R)$ denotes the Weyl group of the root system $R$.
\end{proposition}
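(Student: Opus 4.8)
The plan is to deduce the statement from the classical theory of Lefschetz pencils, the identification of $G$ in each case being essentially Beauville's computation \cite{MR855873} (building on \cite{MR601520, deligne1973groupes}); so below I only outline the structure of the argument. First I would realize $X$ as a hypersurface section of degree $d_r$ of the $(m+1)$-dimensional smooth complete intersection $Y\subset\PP^{m+r}$ of multidegree $(d_1,\dots,d_{r-1})$ (we may assume all $d_i\geq 2$, the remaining case being trivial since then $V=0$), and choose a Lefschetz pencil $\{X_t\}_{t\in\PP^1}$ of such sections with $X=X_{t_0}$. Since the monodromy group $\Gamma$ of this pencil, acting on $V$, is a subgroup of $G\subset O(V)$, it is enough to analyze $\Gamma$. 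Using the weak Lefschetz theorem for $X\subset Y$ and for $Y\subset\PP^{m+r}$ (here $m<\dim Y$), the image of $H^m(Y)\to H^m(X)$ equals the image of $H^m(\PP^{m+r})\to H^m(X)$, so the vanishing cohomology of the pencil---the orthogonal complement in $H^m(X)$ of this image---is precisely $H^m(X)_{\prim}$, i.e.\ $V$ after tensoring with $\C$.

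Next I would record the structure of $\Gamma$. Since $m$ is even, the Picard--Lefschetz formula gives that the local monodromy around each critical value acts on $V$ by an orthogonal reflection $x\mapsto x-(x,\delta_i)\,\delta_i$ in a vanishing cycle $\delta_i$ with $(\delta_i,\delta_i)=\pm 2$. By Lefschetz's theorem the $\delta_i$ span $V$, and since the discriminant in the linear system of degree $d_r$ sections of $Y$ is irreducible, $\Gamma$ permutes the $\delta_i$ transitively; hence $\Gamma$ acts irreducibly on $V$ and is generated by a single conjugacy class of reflections. At this point one invokes the classification of vanishing lattices: either the lattice spanned by the $\delta_i$ with the form $(\ ,\ )$ is a root lattice of finite type---which, by an explicit description of the configuration of vanishing cycles, happens exactly when $X$ is a quadric, a complete intersection of two quadrics, or a cubic surface, giving $\Gamma=W(A_1)$, $W(D_{m+3})$, $W(E_6)$ respectively---or else the Zariski closure of $\Gamma$ is all of $O(V)$, whence $G=O(V)$. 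In the three exceptional cases one checks $G=\Gamma$ directly: for a quadric, $\pi_1(U)=\ZZ/2$ acts by $\pm 1$ on the rank-one space $V$; for a complete intersection of two quadrics, the pencil of quadrics through $X$ has $m+3$ singular members and identifies $V$ with the $D_{m+3}$ root lattice and $G$ with $W(D_{m+3})$; for a cubic surface, $G$ is the classical $W(E_6)$ acting on the primitive part of $H^2$ generated by the $27$ lines.

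The step I expect to be the real obstacle is the dichotomy in the previous paragraph. Unlike the symplectic situation of Proposition~\ref{prop_big_monodromy_odd}, a group generated by a single class of conjugate reflections acting irreducibly with a non-degenerate form need not be Zariski-dense in $O(V)$ (finite Weyl groups occur), so one must genuinely rule out all finite root-lattice configurations other than the three exceptional ones; this amounts to computing the vanishing lattices of all even-dimensional complete intersections, which is exactly what \cite{MR855873, MR601520} do, and I would cite those results rather than reprove them.
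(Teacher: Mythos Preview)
Your argument is correct and follows essentially the same Lefschetz-pencil strategy as the paper. The one substantive difference is in how the dichotomy is resolved: you phrase it as a classification of vanishing lattices (citing Beauville \cite{MR855873}), while the paper bypasses the lattice analysis entirely by invoking Deligne's theorem \cite[Thm~4.4.1]{MR601520} that the Zariski closure $G'$ of the pencil monodromy is either all of $O(V)$ or finite, and then appeals to the Hodge-theoretic criterion \cite[XIX, Prop~3.4]{deligne1973groupes} that $G'$ is finite if and only if $V$ is entirely of Hodge type $(m/2,m/2)$, together with the explicit enumeration \cite[XI, \S 2.9]{deligne1973groupes} of when this occurs. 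This is slightly more economical than your route, since it avoids any direct computation of the vanishing configuration and reduces the question to a Hodge-number check; on the other hand, your formulation makes the link to root systems and the Weyl-group identifications more transparent. For the three exceptional cases the paper simply refers to \cite[XIX, \S 5.2--5.3]{deligne1973groupes}, which is the same content you describe.
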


\begin{proof}
We consider a Lefschetz pencil of sections of a complete intersection of dimension $m+1$
and multidegree $(d_1,\dots,d_{r-1})$ by degree $d_r$ hypersurfaces
(such Lefschetz pencil exists by \cite[XVII, Theorem 2.5.2]{deligne1973groupes}). We denote by 
$G' \subset O(V)$ the Zariski closure of the image of the corresponding 
monodromy representation. By \cite[Theorem 4.41]{weil2}, we have either 
$G'=O(V)$ or $G'$ is finite. By  
\cite[XIX, Proposition 3.4]{deligne1973groupes}, $G'$ is finite if and only if 
the primitive cohomology $V$ is entirely of Hodge type $(m,m)$, and so, using 
\cite[XI, \S 2.9]{deligne1973groupes}, if and only if $X$ is a quadric, a complete intersection of two quadrics, or a cubic surface.
If $X$ is not of these cases, we therefore have $G=O(V)$.

If $X$ is a quadric, then by 
 \cite[XIX, \S 5.2]{deligne1973groupes}, the primitive cohomology $V$ is of dimension $1$, and $G=W(A_1)=\{ \pm \mathrm{Id}\}$, which is also equal to $O(V)$.
 
If $X$ is a a cubic surface or a complete intersection of two quadrics, 
the result 
is contained in \cite[XIX, \S 5.2-5.3]{deligne1973groupes}.
\end{proof}

\subsection{Pairings and the representation
theory of the symmetric group}
\label{section_invariant_theory}
As will be described in more detail in \S \ref{sec_trading}, the 
Gromov--Witten invariants of a complete intersection $X$ can naturally be viewed as multilinear forms on the primitive cohomology 
$V$ of $X$.  
Deformation invariance implies that these multilinear forms are invariant under the action of the monodromy group 
$G$ on $V$. By Propositions \ref{prop_big_monodromy_odd}\,-\,\ref{prop_big_monodromy_even}, $G$ is in most cases a symplectic or orthogonal group. 
After a discussion of the representation of the
symmetric group, 
we will review  the invariant theory of orthogonal and sympletic groups 
in \S \ref{orthh} and \S \ref{sympll}
as preparation for \S \ref{sec_trading}. For a more comprehensive review, see  \cite{fulton1991representation,vershik2005new,zhao2008young}.


\begin{definition}
A \emph{partition} of a positive integer $n$ is a sequence of positive integers
\begin{equation}
    \label{Eq: partition}
\lambda=(\lambda_1, \dots, \lambda_{\ell})    
\end{equation}
 satisfying $\lambda_1 \geq \cdots \geq \lambda_{\ell}$ and $n=\lambda_1+\cdots +\lambda_{\ell}$. We write $\lambda \vdash n$ to indicate that $\lambda$ is a partition of $n$.
 Let $\ell(\lambda) = \ell$ be the {\em length}
 of $\lambda$.
\hspace*{\fill} $\Diamond$
\end{definition}

\begin{definition}
\label{Def: Young diagram}
A \emph{Young diagram} is a finite collection of boxes arranged in left-justified rows,
with the row sizes weakly decreasing\footnote{Apart from the \emph{English convention} which we use here, in recent references one can also find the \emph{French convention}, which is the upside-down form of the English convention.}.
The Young diagram associated to the partition 
$\lambda = (\lambda_1,\lambda_2,\ldots ,\lambda_\ell)$ has $\ell$ rows  and $\lambda_i$ boxes on the $i$-th row, see Figure \ref{Fig: young4}.
\hspace*{\fill} $\Diamond$
\end{definition}
\vspace{8pt}

\begin{figure}
\resizebox{.7\linewidth}{!}{\input{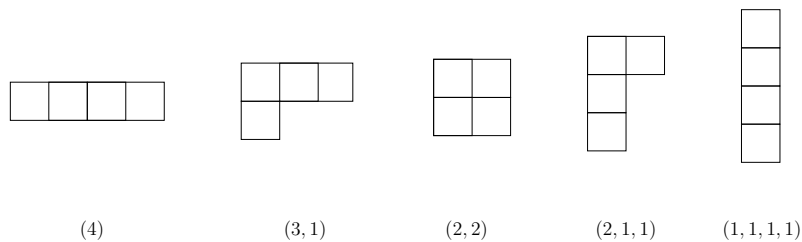}}
\caption{Partitions of $4$ and the corresponding Young diagrams}
\label{Fig: young4}
\end{figure}


The conjugacy classes of irreducible representations of the symmetric group $S_n$ are in a natural (up to a sign) one-to-one correspondence with partitions of~$n$ (or with the Young diagrams with~$n$ boxes). 
We denote by $\rho_\lambda$ the irreducible representation of $S_n$ corresponding to the 
partition $\lambda$. We use the convention that the trivial representation corresponds to the partition $(n)$, or to the Young diagram with a single row, whereas the 1-dimensional sign representation
$\epsilon$, which sends every element of $S_n$ to its sign,
corresponds to the partition $(1,\ldots ,1)$, or to the Young diagram with a single column. 

\begin{definition}
\label{Def: pairing}
An \emph{$n$-pairing} is a 
fixed-point free involution on the set $\{1,...,2n\}$. 
We specify an $n$-pairing by listing the transpositions formed by an element and its image under the involution, so $$(1,2)(3,4)\ldots (2n-1,2n)$$ is an $n$-pairing. We can also represent an $n$-pairing by the associated arc diagram, see Figure~\ref{Fig:ArcDiagram}.
We will often call an $n$-pairing  just a \emph{pairing}. 
\hspace*{\fill} $\Diamond$
\end{definition}
\vspace{8pt}

\begin{figure}
\resizebox{.5\linewidth}{!}{\input{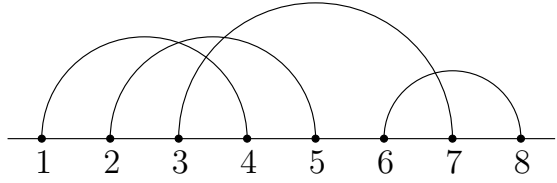}}
    \caption{The arc diagram of the pairing $(14)(25)(37)(68)$}
    \label{Fig:ArcDiagram}
\end{figure}

We denote by $\mathcal{P}_n$ the set of $n$-pairings. Let $\CC\mathcal{P}_n$ be the vector space 
\begin{equation}
    \label{Eq: vector space pn}
   \CC\mathcal{P}_n := \bigoplus_{P \in \mathcal{P}_n} \CC P\,.
\end{equation}
The number of
$n$-pairings is  $$(2n-1)!!=(2n-1)\times (2n-3) \times \dots \times 3 \times 1\, ,$$ 
so $\CC \mathcal{P}_n$ is a vector space of dimension 
$(2n-1)!!$.
There is a natural permutation action of the symmetric group $S_{2n}$
on $\mathcal{P}_n$  inducing a representation of 
$S_{2n}$ on $\CC\mathcal{P}_n$, which we denote  by $\rho_n$. By \cite[Theorem (6.2.1)]{MR2265844}, see also \cite[A.2.9]{MR1676282} or \cite[VII.2, (2.4)]{MR553598}),  the decomposition of $\rho_n$ into irreducible representations of $S_{2n}$ is given by 
\begin{equation} \rho_n = \bigoplus_{\substack{\lambda \vdash 2n \\ \lambda \,\text{has even rows}}}
\rho_\lambda\,.
\label{eq:decomp}
\end{equation}
Each representation appears in the decomposition with multiplicity~1. We denote by
\begin{equation} \CC\mathcal{P}_n = \bigoplus_{\substack{\lambda \vdash 2n \\ \lambda \,\text{has even rows}}}
M_\lambda
\label{eq:decomp_1}
\end{equation}
the underlying canonical decomposition of vector spaces.

\begin{example}
\label{ex_2_pairings}
For $n=2$, we have 
$$
\CC\mathcal{P}_2 = 
\Bigl\{ x \cdot [(12)(34)] + y \cdot [(13)(24)] + z \cdot [(14)(23)] \Bigr\}\, .
$$
which is the direct sum of the trivial representation $\rho_{(4)}$ of $S_4$ on the line $x = y = z$ and the representation $\rho_{(2,2)}$ on the plane $x+y+z=0$. In general, $\rho_n$ always contains a copy of the trivial representation $\rho_{(2n)}$ of 
$S_{2n}$, generated by $\sum_{P \in \mathcal{P}_n} P$, and the corresponding Young diagram has a single row with $2n$ boxes.
\end{example}

\begin{definition} \label{def_crossing_loop}
We associate the following
numbers to $n$-pairings:
\begin{enumerate}
     \item[$\bullet$]
Two pairs in an $n$-pairing form a {\em crossing} if they can be written as $(i, k)$ and $(j, \ell)$ with $i<j<k<\ell$. We denote by $c(P)$ the total number of crossings in an $n$-pairing $P$.
\item[$\bullet$]
The {\em loop number} $L(P_1,P_2)$ associated to 
two pairings $P_1$ and $P_2$ is
the number of loops in the union of their arc diagrams.
The product of permutations $P_1 P_2$ has $2L$ cycles. \hspace*{\fill} $\Diamond$
\end{enumerate}
\end{definition}
\vspace{8pt}

\begin{example}
Consider the pairing $P_1=(14)(25)(37)(68)$ from the previous example and the pairing $P_2 = (12)(34)(57)(68)$. Then $c(P_1) = 4$, $c(P_2) = 1$, and $L(P_1, P_2 ) = 2$, see Figure~\ref{Fig:LoopNumber}. 
\end{example}

\begin{figure}
\resizebox{.5\linewidth}{!}{\input{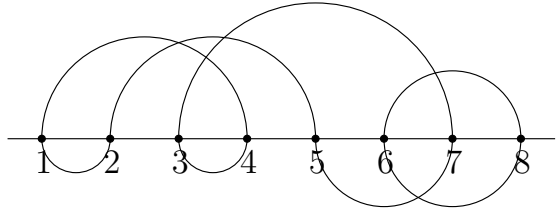}}
    \caption{The loop number of pairings $(14)(25)(37)(68)$ and $(12)(34)(57)(68)$ equals 2.}
    \label{Fig:LoopNumber}
\end{figure}


\subsection{Invariant theory for the orthogonal group} \label{orthh}
Let $V$ be a $k$-dimensional complex vector space endowed with a non-degenerate 
symmetric bilinear form $\alpha$, and let $$O(V) \subset GL(V)$$
be the corresponding orthogonal group.

For every $1 \leq i, j \leq 2n$, define
\begin{align}
\nonumber
   \alpha_{ij} \colon V^{\oplus 2n} & \longrightarrow \CC \\
\nonumber
   (v_1, \ldots, v_{2n}) & \longmapsto \alpha(v_i,v_j)\,.
\end{align}
For a pair $p=(i,j)$, we write $\alpha_p := \alpha_{ij}$. For an $n$-pairing $P \in \mathcal{P}_n$, we define 
\begin{equation} \label{eq_alpha}
\alpha_P := \prod_{p \in P} \alpha_p \in (V^*)^{\otimes 2n} \,
\end{equation}
by multilinearity.
As $\alpha_P$ lies in the invariant subspace 
$((V^*)^{\otimes 2n})^{O(V)} \subset (V^*)^{\otimes 2n}$ under the action of $O(V)$, 
we obtain a linear map 
\begin{align}
    \label{Eq: varphi}
   \varphi \colon \CC\mathcal{P}_n & \longrightarrow ((V^*)^{\otimes 2n})^{O(V)} \\
   \nonumber
   P & \longmapsto \alpha_P \, , 
\end{align}
which is a morphism of  
representations of the symmetric group  $S_{2n}$ since $\alpha$ is symmetric. The action of $S_{2n}$ on $\CC\mathcal{P}_n$ is defined by the representation $\rho_n$ in \eqref{eq:decomp}, 
and the $S_{2n}$-action  on $((V^*)^{\otimes 2n})^{O(V)}$ defined by the permutation of factors.

Basic results in  the invariant theory
of $O(V)$ characterize the invariant subspace $((V^*)^{\otimes 2n})^{\mathrm{O}(V)}\subset
V^{\otimes 2n}$.

\begin{theorem}
\label{Thm: fund inv o}
For every $n \in \Z_{\geq 0}$, the map $$ \varphi \colon \CC\mathcal{P}_n \longrightarrow ((V^*)^{\otimes 2n})^{\mathrm{O}(V)}$$ of \eqref{Eq: varphi} is surjective with kernel 
\begin{equation} \label{ker_orthogonal}
\mathrm{Ker}\, \varphi = \bigoplus_{\substack{\lambda \vdash 2n \\ \lambda \,\text{has even rows}\\ \ell(\lambda)
\geq k+1}}
M_\lambda\ \
\subset  \ \
\CC\mathcal{P}_n = \bigoplus_{\substack{\lambda \vdash 2n \\ \lambda \,\text{has even rows}}}
M_\lambda \,.
\end{equation}
In other words, the map $\varphi$ induces an isomorphism 
\begin{equation} \label{eq_inv_orthogonal}
    ((V^*)^{\otimes 2n})^{\mathrm{O}(V)} \simeq  \bigoplus_{\substack{\lambda \vdash 2n \\ \lambda \,\text{has even rows}\\ \ell(\lambda)
\leq k}}
M_\lambda \ \subset\ \CC \mathcal{P}_n\,.
\end{equation}
Moreover, for every $n \in \Z_{\geq 0}$, 
the space $((V^*)^{\otimes 2n+1})^{\mathrm{O}(V)}$ equals~0.
\end{theorem}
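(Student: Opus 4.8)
\medskip

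The plan is to deduce the statement from the first fundamental theorem (FFT) and second fundamental theorem (SFT) of invariant theory for the orthogonal group $\mathrm{O}(V)$, reorganized through the $S_{2n}$-representation theory recalled above. First I would record that the FFT for $\mathrm{O}(V)$ (in the form of, e.g., \cite{MR2265844}, or classically Weyl) says exactly that every element of $((V^*)^{\otimes 2n})^{\mathrm{O}(V)}$ is a linear combination of the $\alpha_P$, $P\in\mathcal P_n$; that is, the map $\varphi$ of \eqref{Eq: varphi} is surjective. It also immediately yields the vanishing of $((V^*)^{\otimes m})^{\mathrm{O}(V)}$ for $m$ odd, since there are no complete pairings of an odd set and contracting an odd number of vectors against a bilinear form cannot produce a nonzero invariant: this handles the last sentence. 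It remains to identify $\mathrm{Ker}\,\varphi$.

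\medskip

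The key point is that $\varphi$ is a morphism of $S_{2n}$-representations — it intertwines $\rho_n$ on $\CC\mathcal P_n$ with the permutation-of-tensor-factors action on $((V^*)^{\otimes 2n})^{\mathrm{O}(V)}$ — because $\alpha$ is symmetric, so $\alpha_{ij}=\alpha_{ji}$ and permuting the tensor slots permutes the pairs of a pairing accordingly. Since, by \eqref{eq:decomp}, each irreducible $\rho_\lambda$ with $\lambda\vdash 2n$ having even rows occurs in $\CC\mathcal P_n$ with multiplicity exactly one, Schur's lemma forces $\mathrm{Ker}\,\varphi$ to be a sum of a subset of the isotypic pieces $M_\lambda$. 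Thus the problem is reduced to determining, for each such $\lambda$, whether $M_\lambda$ maps to zero or injectively; equivalently, to computing which $M_\lambda$ survive in the image. I would then invoke the SFT for $\mathrm{O}(V)$: when $\dim V=k$, the relations among the $\alpha_P$ are generated by the ``Garnir-type'' antisymmetrizations over $k+1$ indices (the vanishing of $(k+1)\times(k+1)$ minors / the Plücker-type relations), and a standard argument — already carried out in \cite{MR2265844} — shows that the span of these relations is precisely $\bigoplus_{\ell(\lambda)\ge k+1}M_\lambda$. Concretely, an $\alpha_P$-relation supported on a $\lambda$ with $\ell(\lambda)\le k$ cannot be forced to vanish because one can exhibit vectors in $V$ on which the corresponding invariant tensor is nonzero (a dimension count on how many slots the form $\alpha$ ``couples''), while for $\ell(\lambda)\ge k+1$ the pigeonhole on a $k$-dimensional space kills it. Matching the two descriptions gives \eqref{ker_orthogonal}, and hence the isomorphism \eqref{eq_inv_orthogonal} by passing to the complement.

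\medskip

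The main obstacle, and the step I would spend the most care on, is the precise identification of the kernel as a \emph{full} sum of isotypic components cut out by the length condition $\ell(\lambda)\ge k+1$, rather than merely bounding it. One direction — that $M_\lambda\subset\mathrm{Ker}\,\varphi$ whenever $\ell(\lambda)>k$ — is the ``easy'' half and follows from the SFT relation being an alternation over $k+1$ tensor slots, which automatically vanishes on $V^{\otimes 2n}$ when $\dim V=k$. The harder half is showing that no further $M_\lambda$ with $\ell(\lambda)\le k$ lies in the kernel, i.e.\ that $\varphi$ is injective on $\bigoplus_{\ell(\lambda)\le k}M_\lambda$; this is exactly where one needs the sharp form of the SFT (that the listed relations generate \emph{all} relations) together with a dimension/character count — comparing $\sum_{\ell(\lambda)\le k,\ \lambda\ \mathrm{even\ rows}}\dim M_\lambda$ with $\dim((V^*)^{\otimes 2n})^{\mathrm{O}(V)}$. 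Since all of this is precisely the content of \cite[Thm (6.2.1)]{MR2265844} and the accompanying second fundamental theorem, I would cite that work for the sharp statement and limit the exposition here to: (i) checking $\varphi$ is $S_{2n}$-equivariant, (ii) invoking multiplicity-freeness of $\rho_n$ to reduce to isotypic components, and (iii) quoting the FFT/SFT to pin down which components survive.
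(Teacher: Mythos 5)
Your proposal is correct and takes essentially the same route as the paper: surjectivity from the first fundamental theorem and identification of the kernel from the second fundamental theorem for $\mathrm{O}(V)$ as given in \cite{MR2265844}, with the $S_{2n}$-equivariance of $\varphi$ and the multiplicity-freeness of $\rho_n$ reducing the question to isotypic components. The only (harmless) difference is the odd-degree vanishing, which the paper obtains more directly by observing that $-\mathrm{Id}\in\mathrm{O}(V)$ acts as $-\mathrm{Id}$ on $(V^*)^{\otimes 2n+1}$, whereas you deduce it from the absence of pairings of an odd-element set.
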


\begin{proof}
The surjectivity of $\rho$ follows from the first fundamental theorem of invariant theory for the orthogonal group, see \cite[\S 11.2.1, Thm, p390]{MR2265844}. 
By the second fundamental theorem of invariant theory for the orthogonal group
in the form of
\cite[\S 11.6.3.1, Theorem 1, p413]{MR2265844}, the kernel of $\varphi$ is given by \eqref{ker_orthogonal}.

The vanishing of the invariant subspace of $(V^*)^{\otimes 2n+1}$ follows from the fact that $$-{\rm Id} \in \mathrm{O}(V)$$ 
acts as $- {\rm Id}$ on $(V^*)^{\otimes 2n+1}$, so the only invariant vector is~0.
\end{proof}

\begin{example}
According to Theorem~\ref{Thm: fund inv o}, every $\mathrm{O}(V)$-invariant element of $(V^*)^{\otimes 4}$ has the form
$$
x \cdot \alpha_{12} \alpha_{34} + y \cdot \alpha_{13} \alpha_{24}+  z \cdot \alpha_{14}\alpha_{23}\,.
$$
If $k \geq 2$, then $x,y,z$ are simply the coordinates on the invariant space. Indeed, the two partitions of 4 into even elements are $(4)$ and $(2,2)$, and none of them has $\geq k+1$ elements.

The case $k=1$ is special, since then
$2 \geq k+1$. By Theorem~\ref{Thm: fund inv o},
the space $M_{(2,2)}$ must lie in the kernel of~$\varphi$. 
To check this claim directly, we denote by $u_i$ the coordinate in the $i$th copy of~$V$. Then, $\alpha_{ij} = u_i u_j$. Thus 
$$
 \alpha_{12} \alpha_{34} = \alpha_{13} \alpha_{24}= \alpha_{14}\alpha_{23} = u_1u_2u_3u_4\,.
$$
The relations span the kernel $x+y+z=0$ of $\varphi$, which is precisely $M_{(2,2)}$. 
\end{example}

\subsection{Invariant theory for the symplectic group} \label{sympll}
Let $V$ be a $2k$-dimensional complex vector space endowed with a non-degenerate 
skew-symmetric bilinear form $\omega$. 
Let $$\mathrm{Sp}(V) \subset GL(V)$$ be the corresponding symplectic group.
The invariant theory for the symplectic group is very similar to the
orthogonal case, but care must be taken with the signs.

For every $1 \leq i, j \leq 2n$, define
\begin{align}
   \nonumber
   \omega_{ij} \colon V^{\oplus 2n} & \longrightarrow \CC \\
   \nonumber
   (v_1, \ldots, v_{2n}) & \longmapsto \omega(v_i,v_j)\,. 
\end{align}
For a pair $p=(i,j)$ with $i<j$, we write 
$\omega_p := \omega_{ij}$.
For an $n$-pairing $P \in \mathcal{P}_n$, we 
define 
\begin{equation}
    \label{Eq: omegap}
    \omega_P := (-1)^{c(P)}\prod_{p \in P} \omega_p \in (V^*)^{\otimes 2n},
\end{equation}
where $c(P)$ is the number of crossings in~$P$
(see Definition \ref{def_crossing_loop}). Clearly, $\omega_P$ lies in the invariant subspace 
$((V^*)^{\otimes 2n})^{\mathrm{Sp}(V)}$, so we obtain a linear map 
\begin{align}
       \label{Eq:psi}
 \psi \colon \CC\mathcal{P}_n & \longrightarrow ((V^*)^{\otimes 2n})^{\mathrm{Sp}(V)} \\
 \nonumber
P & \longmapsto \omega_P \,.
\end{align}

The sign convention involving the number of crossings in the definition of $\omega_P$ in~\eqref{Eq: omegap}
is necessary for the following result to hold.

\begin{lemma} \label{lem_sign}
The map $\psi \colon \CC\mathcal{P}_n \to ((V^*)^{\otimes 2n})^{\mathrm{Sp}(V)}$ of \eqref{Eq:psi} is a morphism of representations of $S_{2n}$, where 
\begin{enumerate}
    \item [(i)] the action of  $S_{2n}$ on
    $\CC\mathcal{P}_n$ is given 
    by $\rho_n \otimes \epsilon$, where $\rho_n$ is as in \eqref{eq:decomp} and $\epsilon$ is the sign representation,
    \item[(ii)] the action of  $S_{2n}$ on $((V^*)^{\otimes 2n})^{\mathrm{Sp}(V)}$ is given by permutations of the $2n$ factors.
\end{enumerate}
\end{lemma}

\begin{proof}
It is enough to show, for every elementary transposition $\tau = (i,i+1) \in S_{2n}$, we have 
\begin{equation}\label{eq_relation}
\omega_{\tau \cdot P}
= - \tau \cdot \omega_P\,.\end{equation}

\noindent $\bullet$ If $(i,i+1)$ is a pair of the pairing~$P$, then $\tau \cdot P=P$, so $\omega_{\tau\cdot P} = \omega_P$. On the other hand,
$\tau \cdot \omega_{P}=-\omega_P$, because $\omega$ is skew-symmetric, and  \eqref{eq_relation} holds.

\noindent $\bullet$ If $i$ and $i+1$ belong to two different pairs, then let $\tau \cdot P = P'$. We have $\omega_{\tau\cdot P} = \omega_{P'}$. The pairing $P'$ has exactly one crossing more or less than~$P$. If we sort the elements of each pair of~$P$ in increasing order and then permute $i$ and $i+1$, the elements of every pair of $P'$ are still sorted in  increasing order. Thus we have $\tau \cdot \omega_P = - \omega_{P'}$, so \eqref{eq_relation} holds again.
\end{proof}

Basic results in  the invariant theory
of $\mathrm{Sp}(V)$ characterize the invariant subspace
$((V^*)^{\otimes 2n})^{\mathrm{Sp}(V)} \subset (V^*)^{\otimes 2n}$.

\begin{theorem}
\label{Thm: fund inv sp}
For every $n \in \Z_{\geq 0}$, the map $$\psi \colon \CC\mathcal{P}_n \longrightarrow ((V^*)^{\otimes 2n})^{\mathrm{Sp}(V)}$$ of \eqref{Eq:psi} is surjective with kernel 
\begin{equation}
\label{Eq: ker psi}
    \mathrm{Ker}\, \psi =  \bigoplus_{\substack{\lambda \vdash 2n \\ \lambda \,\text{has even rows}\\ 
    \lambda_1 \geq 2k+2}} M_\lambda
\ \subset\  
\CC\mathcal{P}_n \,.
\end{equation}
In other words, the map $\psi$ induces an isomorphism 
\begin{equation} \label{eq_inv_symplectic}
    ((V^*)^{\otimes 2n})^{\mathrm{Sp}(V)} \simeq  \bigoplus_{\substack{\lambda \vdash 2n \\ \lambda \,\text{has even rows}\\ 
    \lambda_1 \leq 2k}} M_\lambda    \ \subset\ \CC \mathcal{P}_n\,.
\end{equation}
Moreover, for every $n \in \Z_{\geq 0}$, 
the space $((V^*)^{\otimes 2n+1})^{\mathrm{Sp}(V)}$ equals~0.
\end{theorem}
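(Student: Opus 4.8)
The plan is to mirror the proof of Theorem \ref{Thm: fund inv o} for the orthogonal group, tracking the sign twist introduced by Lemma \ref{lem_sign}. First I would invoke the first fundamental theorem of invariant theory for $\mathrm{Sp}(V)$ to obtain surjectivity of $\psi$: the subspace $((V^*)^{\otimes 2n})^{\mathrm{Sp}(V)}$ is spanned by products $\prod_{p} \omega_p$ over perfect matchings $p$ of $\{1,\dots,2n\}$, which is exactly the image of $\psi$ since the sign $(-1)^{c(P)}$ is a unit; the reference is \cite[\S 11.2.1, Thm]{MR2265844} (or the classical statement for $\mathrm{Sp}$). For the kernel, I would apply the second fundamental theorem for the symplectic group, say in the form given in \cite[\S 11.6]{MR2265844}: the relations among the $\omega_P$ come from a single ``Pfaffian'' relation (the $(2k+2)$-point relation obtained from antisymmetrization), and this relation, decomposed into irreducibles of $S_{2n}$, cuts out precisely the isotypic components $M_\lambda$ with $\lambda$ having first part $\lambda_1 \geq 2k+2$. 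The key point, which must be checked, is bookkeeping: because $\psi$ is a morphism for $\rho_n \otimes \epsilon$ rather than $\rho_n$ (Lemma \ref{lem_sign}), the irreducible $\rho_\lambda$ appearing in the orthogonal decomposition \eqref{eq:decomp} gets replaced by $\rho_\lambda \otimes \epsilon = \rho_{\lambda'}$, where $\lambda'$ is the conjugate partition; so a constraint ``$\ell(\lambda) \geq k+1$'' on the orthogonal side (bounding the number of rows) becomes the constraint ``$\lambda_1 \geq 2k+2$'' on the symplectic side (bounding the number of columns, with $2k = \dim V$ replacing $k$). I would make this transposition explicit and confirm it reproduces the stated kernel \eqref{Eq: ker psi}.

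Concretely, the steps in order would be: (i) state the FFT for $\mathrm{Sp}(V)$ and deduce that $\psi$ is surjective; (ii) recall the decomposition \eqref{eq:decomp_1} of $\CC\mathcal{P}_n$ and, using Lemma \ref{lem_sign}, rewrite it as an $S_{2n}$-decomposition for the twisted action $\rho_n \otimes \epsilon$, so that $M_\lambda$ carries the representation $\rho_{\lambda'}$; (iii) quote the SFT for $\mathrm{Sp}(V)$, identify the space of relations as an $S_{2n}$-subrepresentation, and match its isotypic decomposition against the conjugate-partition indexing to conclude $\mathrm{Ker}\,\psi = \bigoplus_{\lambda_1 \geq 2k+2} M_\lambda$; (iv) pass to the quotient to obtain the isomorphism \eqref{eq_inv_symplectic}, noting that surjectivity plus the identification of the kernel gives it immediately; (v) for the odd case, observe that $-\mathrm{Id} \in \mathrm{Sp}(V)$ acts by $-\mathrm{Id}$ on $(V^*)^{\otimes 2n+1}$, so the invariant subspace is zero, exactly as in the orthogonal argument.

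The main obstacle I anticipate is step (iii): making precise how the second fundamental theorem's relations — which are classically phrased as vanishing of certain $(2k+2) \times (2k+2)$ Pfaffian-type expressions when $\dim V = 2k$ — translate into the clean statement that $\mathrm{Ker}\,\psi$ is the sum of those $M_\lambda$ with $\lambda_1 \geq 2k+2$, rather than some other combinatorial condition. The subtlety is entirely in the sign twist: one must be careful that the relation generating the kernel is genuinely the $\epsilon$-twist of the orthogonal relation (which lives in rows of length $\geq k+1$), so that after twisting it lands in columns of length $\geq 2k+2$, i.e.\ $\lambda_1 \geq 2k+2$ after conjugation. Everything else is a formal transcription of the orthogonal argument of Theorem \ref{Thm: fund inv o}, and I would cite \cite{MR2265844} (and possibly \cite{MR553598}) for the representation-theoretic input rather than reproving it.
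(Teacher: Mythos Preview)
Your proposal is correct and follows essentially the same route as the paper's proof: both invoke the first fundamental theorem for $\mathrm{Sp}(V)$ from \cite[\S 11.2.1]{MR2265844} for surjectivity, then use Lemma~\ref{lem_sign} together with the second fundamental theorem \cite[\S 11.6.3.2]{MR2265844} and the transposition $\rho_\lambda \otimes \epsilon = \rho_{\lambda^t}$ to identify the kernel, and finally handle the odd case via $-\mathrm{Id}\in\mathrm{Sp}(V)$. The only minor sharpening needed is in your heuristic ``$\ell(\lambda)\geq k+1$ becomes $\lambda_1\geq 2k+2$'': the paper applies the symplectic SFT directly to obtain the kernel as $\bigoplus_{\lambda\,\text{even columns},\,\ell(\lambda)\geq 2k+2}\rho_\lambda$ inside $\rho_n\otimes\epsilon$, and \emph{then} transposes to land on even-row partitions with $\lambda_1\geq 2k+2$ --- so the $2k$ appears from $\dim V$ in the symplectic SFT itself, not from the transposition step.
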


\begin{proof}
The surjectivity of $\psi$ follows from the first fundamental theorem of invariant theory for the symplectic group, see \cite[\S 11.2.1, Thm, p390]{MR2265844}. 
By Lemma \ref{lem_sign}, we can apply the second fundamental theorem of invariant theory for the symplectic group
in the form of
\cite[\S 11.6.3.2, Theorem 2, p414]{MR2265844}.
The kernel of $\psi$ is then given as a subrepresentation of $\rho_n \otimes \epsilon$
by 
\begin{equation} 
 \bigoplus_{\substack{\lambda \vdash 2n \\ \lambda \,\text{has even columns}\\ \ell(\lambda)
\geq 2k+2}}
\rho_\lambda \,.
\end{equation}
As $\rho_{\lambda} \otimes \epsilon = \rho_{\lambda^t}$,
where $\lambda^t$ is the transposed Young diagram, the kernel of 
$\psi$, viewed simply as a linear subspace of $\CC\mathcal{P}_n$, is given by \eqref{Eq: ker psi}.

The vanishing of the invariant subspace of $(V^*)^{\otimes 2n+1}$ follows from the fact that $$-{\rm Id} \in \mathrm{Sp}(V)$$ acts as $- {\rm Id}$ on $(V^*)^{\otimes 2n+1}$, so the only invariant vector is~0.
\end{proof}

\begin{example}
According to Theorem~\ref{Thm: fund inv sp}, every $\mathrm{Sp}(V)$-invariant element of $(V^*)^{\otimes 4}$ has the form
$$
x \cdot \omega_{12} \omega_{34}- y \cdot \omega_{13} \omega_{24}+  z \cdot \omega_{14}\omega_{23}\,.
$$
If $k \geq 2$, then $x,y,z$ are simply the coordinates on the invariant space. Indeed, the two partitions of 4 into even elements are $(4)$ and $(2,2)$, and none of them has elements $\geq 2k+2$. 

The case $k=1$ is special, since $4 \geq 2k+2$. 
By Theorem \ref{Thm: fund inv sp},
the space $M_{(4)}$ must lie in the kernel of~$\psi$. To check this claim directly,
we denote by $u_i, v_i$ the coordinates in the $i$th copy of~$V$. Then, $\omega_{ij} = u_i v_j - v_j u_i$. These polynomials satisfy the well-known Pl\"ucker relation
$$
\omega_{12} \omega_{34} - \omega_{13} \omega_{24} + \omega_{14}\omega_{23} = 0\,.
$$
This relation spans the kernel $x=y=z$ of $\psi$, which is precisely $M_{(4)}$. 
\end{example}

\subsection{The loop matrix}
\label{sec_matrix}
Building on the representation theory  reviewed in 
\S \ref{section_invariant_theory} - \S \ref{sympll}, we describe here a family 
of matrices 
which will be used in \S \ref{sec_trading}
to show that primitive insertions in Gromov--Witten theory of complete intersections can be traded against nodes. Such matrices have also
appeared in various places in the literature, particularly in
\cite[\S3]{HW}, \cite[\S VII.2]{MR553598}, \cite[\S 3]{MR2586868},
and, in the study of  the tautological ring, in \cite{Tavakol}.

\begin{definition} \label{def_matrix}
For every $n \in \Z_{\geq 0}$,  the \emph{loop matrix} $\mathsf{M}(n,x)$ is
the $(2n-1)!! \times (2n-1)!!$ matrix with entries 
indexed by pairs of $n$-pairings $P$, $P'$ and given by 
\[ \mathsf{M}(n,x)_{P,P'}:=x^{L(P,P')}\, ,\]
where $L(P,P')$ is the loop number of the pairings $P$ and $P'$ (see
Definition \ref{def_crossing_loop}), and $x$ is a formal variable (which will be eventually specialized to integer values).

The matrix  $\mathsf{M}(n,x)$ canonically acts as an endomorphism of the vector space $\CC\mathcal{P}_n =\bigoplus_{P \in \CC \mathcal{P}_n} \CC P$.
\hspace*{\fill} $\Diamond$
\end{definition}

\begin{proposition} \label{prop_eigenvalue}
The direct sum decomposition 
\[ \CC\mathcal{P}_n =\bigoplus_{\substack{\lambda \vdash 2n \\ \lambda \,\text{has even rows}}}
M_\lambda\,.\]
given in \eqref{eq:decomp_1} is an eigenspace decomposition for the loop matrix $\mathsf{M}(n,x)$.
More precisely, for every partition $\lambda \vdash 2n$ with even rows, the subspace $M_\lambda \subset \CC\mathcal{P}_n$
is an eigenspace of $\mathsf{M}(n,x)$ for the eigenvalue 
\begin{equation}\label{eigfor}
 \prod_{(i,j) \in \frac{1}{2}\lambda} (x-i+2j-1) \,,
 \end{equation}
where $\frac{1}{2} \lambda$ is the partition whose parts are half of those of $\lambda$. The product \eqref{eigfor}
is over the boxes of the Young diagram of $\frac{1}{2}\lambda$, where $i$ is the row index, starting at $1$ and increasing when going downwards, and $j$ is the column index, starting at $1$ and increasing when going from left to right.
\end{proposition}

\begin{proof}
There are several existing proofs in the literature. Following Zinn-Justin \cite[\S 3]{MR2586868}, there exists a lift of $\mathsf{M}(n,x)$ to an element of the group algebra of $S_{2n}$ expressed in terms of Jucys-Murphy elements \cite[Lemma 1]{MR2586868}. The final result can be found in the paragraph above \cite[Proposition 5]{MR2586868}. An alternative proof by  Hancon-Wales
can be found in \cite[Theorem 3.1]{HW}. A more expository 
presentation with the same approach is contained in \S VII.2 of Macdonald's book \cite{MR553598}, where a more general result follows from the theory of zonal symmetric polynomials. 

We summarize here 
the approach taken in Macdonald's book.
Let $P$ and $P'$ be two $n$-pairings. Each loop in the graph obtained by gluing together the arc diagrams of $P$ and $P'$ has even length. The half-lengths of these loops define a partition $\lambda(P,P')$ of $n$. In particular, the loop number 
    $L(P,P')$ is exactly the length of this partition: $L(P,P')=\ell(\lambda(P,P'))$.
    
    Let $\mathsf{M}(n,x_1,\dots,x_N)$ be the 
    $(2n-1)!! \times (2n-1)!!$ matrix whose entries indexed by pairs $P$ and $P'$ of 
    $n$-pairings are given by the power-sum symmetric polynomials $p_{\lambda(P,P')}(x_1,\cdots, x_N)$ in some number $N$ of variables. 
    According to 
    \cite[VII.2, Example 5]{MR553598}, for every $\lambda \vdash 2n$ with even rows,
    the subspace $M_\lambda \subset \CC\mathcal{P}_n$ is an eigenspace for
    $\mathsf{M}(n, x_1,\dots, x_N)$, and the corresponding eigenvalue is the zonal symmetric 
    polynomial $Z_{\frac{1}{2}\lambda}(x_1,\cdots, x_N)$. We now consider the 
    specialization $x_1=\dots=x_N=1$. 
    As 
    \[ p_{\lambda(P,P')}(1,\dots, 1)=N^{\ell(\lambda(P,P'))}=N^{L(P,P')}\,,\] 
    we have 
    $\mathsf{M}(n, 1,\dots,1)=\mathsf{M}(n, N)$. On the other hand, we have 
    $$Z_{\frac{1}{2}\lambda}(1,\dots,1)=\prod_{(i,j) \in\frac{1}{2}\lambda} (N-i+2j-1)$$ by 
    \cite[VII.2, Eq (2.24)-(2.25)]{MR553598}, which proves Proposition 
    \ref{prop_eigenvalue} for $x$ specialized to a positive integer $N$. 
    As the dependence of $\mathsf{M}(n,x)$ on $x$ is polynomial, the argument actually proves 
    Proposition 
    \ref{prop_eigenvalue} in general for any $x$.
\end{proof}

\begin{corollary} \label{cor_1}
For every $k \in \Z_{\geq 0}$, the endomorphism
$\mathsf{M}(n, k)$ preserves the direct sum decomposition  
\[\CC\mathcal{P}_n =\bigoplus_{\substack{\lambda \vdash 2n \\ \lambda \,\text{has even rows}}}
M_\lambda\,.\] 
Moreover, $\mathsf{M}(n,k)$ is zero when restricted to the subspace 
\[\bigoplus_{\substack{\lambda \vdash 2n \\ \lambda \,\text{has even rows}\\ \ell(\lambda)
\geq k+1}}
M_\lambda
\subset 
\CC\mathcal{P}_n\]
and invertible when restricted to the subspace \[\bigoplus_{\substack{\lambda \vdash 2n \\ \lambda \,\text{has even rows}\\ \ell(\lambda)
\leq  k}}
M_\lambda
\subset 
\CC\mathcal{P}_n\,.\]
In particular, for every $k$-dimensional complex vector space $V$ endowed with a non-degenerate symmetric bilinear form $\alpha$, the endomorphism 
$\mathsf{M}(n,k)$ preserves the subspace of invariants $((V^*)^{\otimes 2n})^{O(V)}$, viewed as a subspace of 
$\CC \mathcal{P}_n$ via \eqref{eq_inv_orthogonal}, 
and the restriction of $\mathsf{M}(n,k)$ to the subspace 
of invariants is invertible.
\end{corollary}

\begin{proof}
By Proposition \ref{prop_eigenvalue}, 
$M_\lambda \subset \CC\mathcal{P}_n$ is an eigenspace of $\mathsf{M}(n,k)$ for the eigenvalue
\[ c_\lambda(k) :=\prod_{(i,j) \in \frac{1}{2}\lambda} (k-i+2j-1)\,.\]
We have
$\ell(\frac{1}{2} \lambda)=\ell(\lambda)$.
If $\ell(\lambda) \geq k+1$, there exists $(i,j) \in \frac{1}{2}\lambda$ with 
$i=k+1$ and $j=1$, and so $c_\lambda(k)=0$.
If $\ell(\lambda) \leq k$, then, for every $(i,j) \in \frac{1}{2}\lambda$, we have 
$i \leq k$. As $j \geq 1$, we obtain
$k-i+2j-1 \geq 1$, and so  $c_\lambda(k) \neq 0$.
\end{proof}

\begin{corollary} \label{cor_2}
For every $k \in \Z_{\geq 0}$, the endomorphism
$\mathsf{M}(n,-2k)$ 
 preserves the direct sum decomposition  
\[\CC\mathcal{P}_n =\bigoplus_{\substack{\lambda \vdash 2n \\ \lambda \,\text{has even rows}}}
M_\lambda\,.\] 
Moreover, $\mathsf{M}(n,-2k)$ is zero when restricted to the subspace 
\[\bigoplus_{\substack{\lambda \vdash 2n \\ \lambda \,\text{has even rows}\\ \lambda_1
\geq 2k+2}}
M_\lambda
\subset 
\CC\mathcal{P}_n\]
and invertible when restricted to the subspace \[\bigoplus_{\substack{\lambda \vdash 2n \\ \lambda \,\text{has even rows}\\ \lambda_1
\leq  2k}}
M_\lambda
\subset 
\CC\mathcal{P}_n\,.\]
In particular, for every $2k$-dimensional complex vector space $V$ endowed with a non-degenerate skew-symmetric bilinear form $\omega$, the endomorphism 
$\mathsf{M}(n,-2k)$ preserves the subspace of invariants $((V^*)^{\otimes 2n})^{\mathrm{Sp}(V)}$, viewed as a subspace of $\CC \mathcal{P}_n$ using 
\eqref{eq_inv_symplectic}, and the restriction of $\mathsf{M}(n,-2k)$ to the subspace of invariants is invertible.
\end{corollary}

\begin{proof}
By Proposition \ref{prop_eigenvalue}, the eigenvalue of $\mathsf{M}(n,-2k)$ restricted to 
$M_\lambda \subset \CC\mathcal{P}_n$ is  
\[ d_\lambda(k) :=\prod_{(i,j) \in \frac{1}{2}\lambda} (-2k-i+2j-1)\,.\]
If $\lambda_1 \geq 2k+2$, then $\frac{\lambda_1}{2} \geq k+1$. There exists $(i,j) \in \frac{1}{2}\lambda$ with 
$i=1$ and $j=k+1$, and so $d_\lambda(k)=0$.
If $\lambda_1 \leq 2k$, then $\frac{\lambda_1}{2} \leq k$, and
for every $(i,j) \in \frac{1}{2}\lambda$, we have 
$j \leq k$, and as $i \geq 1$, we obtain 
$-2k-i+2j-1 \leq -2$, and so  $d_\lambda(k) \neq 0$.
\end{proof}

\begin{example}
For $n=2$, we have $3$ pairings: $P_1=(12)(34)$, $P_2=(13)(24)$, and $P_3=(14)(23)$, as in Example \ref{ex_2_pairings}. Computing directly the loop numbers, we find
\[ \mathsf{M}(2,x)=
\begin{pmatrix}
x^2 & x & x\\
x & x^2 & x \\
x& x& x^2
\end{pmatrix} \,,\]
which has a simple eigenvalue $x(x+2)$, corresponding to 
$\lambda=(4)$ in Proposition \ref{prop_eigenvalue}
(the Young diagram of $\frac{1}{2}\lambda$ is a row of two boxes, with 
$(i,j)=(1,1)$ and $(1,2)$), and a double eigenvalue 
$x(x-1)$, corresponding to $\lambda=(2,2)$ in Proposition \ref{prop_eigenvalue} 
(the Young diagram of $\frac{1}{2}\lambda$ is a column of two boxes, with 
$(i,j)=(1,1)$ and $(2,1)$).
The eigenspace for the eigenvalue 
$x(x+2)$ is the line $x=y=z$, and the eigenspace for the eigenvalue $x(x-1)$
is the plane $x+y+z=0$, as predicted by Proposition \ref{prop_eigenvalue} and Example \ref{ex_2_pairings}.

We leave as an exercise for the reader to write explicitly the $15 \times 15$ matrix $\mathsf{M}(3,x)$ and to check directly that the eigenvalues are $x(x+2)(x+4)$, $x(x+2)(x-1)$, and $x(x-1)(x-2)$ with multiplicities $1$, $9$, and $5$ respectively.
\end{example}

\subsection{The matrix of diagonal insertions}
\label{sec_matrix_diagonal}

In \S \ref{sec_trading}, we will invert a system of relations obtained by applying the splitting formula 
\eqref{eq_splitting}
to nodal invariants.
To write these relations explicitly, we must first effectively compute the effect of the insertions of the class of the diagonal in the splitting formula. 

Let $X$ be an $m$-dimensional complete intersection in projective space, and let 
$V=H^m(X)_{\prim} \otimes \C$ be the primitive cohomology. As in \S\ref{sec_monodromy}, let $\alpha$ be the non-degenerate symmetric intersection form on $V$ if $m$ is even, and let
$\omega$ be the non-degenerate skew-symmetric intersection form on $V$ if $m$ is odd.

For every $n \in \Z_{\geq 0}$ and every $n$-pairing $P \in \mathcal{P}_n$, we define a {\em test multivector} 
\[ \Delta_P \in V^{\otimes 2n}\]
as follows.

\vspace{8pt}
\noindent$\bullet$ \textbf{Test multivectors, symmetric case.} If $m$ is even, the intersection form~$\alpha$ on $V$ is symmetric. Denote by $\alpha^{-1}$ the inverse symmetric bi-vector in $V \otimes V$. For a pair $p = \{ i, j \}$, denote by $\alpha^{-1}_p$ the same bi-vector lying in the tensor product of the $i$th and $j$th copies of~$V$. For an $n$-pairing~$P$, we define  
\begin{equation}\label{eq_delta_P_alpha}
\Delta_P := \bigotimes_{p \in P} \alpha^{-1}_p\,.
\end{equation}

\vspace{8pt}
\noindent $\bullet$ \textbf{Test multivectors, skew-symmetric case.} If $m$ is odd, the intersection form~$\omega$ on $V$ is skew-symmetric. Denote by $\omega^{-1}$, the inverse skew-symmetric bi-vector in $V \otimes V$. For a pair $p = \{ i, j \}$, $i<j$ denote by $\omega^{-1}_p$ the same bi-vector lying in the tensor product of the $j$th and $i$th copies of~$V$ in that order (in  decreasing order of elements of the pair). For an $n$-pairing~$P$,
we define 
\begin{equation}\label{eq_delta_P_omega}
\Delta_P = (-1)^{c(P)} \bigotimes_{p \in P} \omega^{-1}_p\,,
\end{equation}
where $c(P)$ is the number of crossings
(defined in Definition \ref{def_crossing_loop}).

\begin{definition} \label{def_matrix_diagonal}
The \emph{matrix of diagonal insertions} $\mathsf{M}^\Delta(n,V)$ is the $(2n-1)!! \times (2n-1)!!$ matrix with entries indexed by pairs of $n$-pairings $P$, $P'$ and given by
$$
\mathsf{M}^\Delta(n,V)_{P, P'} = \alpha_P(\Delta_{P'}) 
$$
if $m$ is even, where $\alpha_P \in (V^{*})^{\otimes 2n}$ 
is as in \eqref{eq_alpha}, and 
$$
\mathsf{M}^\Delta(n,V)_{P, P'}= \omega_P(\Delta_{P'})
$$
if $m$ is odd, where $\omega_P \in (V^{\star})^{\otimes 2n}$ 
is as in \eqref{Eq: omegap}.
\hspace*{\fill} $\Diamond$
\end{definition}
\vspace{8pt}

The following result explicitly computes the matrix 
$\mathsf{M}^\Delta(n,V)$ in terms of the loop matrix
of Definition \ref{def_matrix}.

\begin{theorem}\label{thm_matrix_diagonal}
There are two cases:
\begin{itemize}
\item[(i)] If $m$ is even, we have 
$\mathsf{M}^\Delta(n,V)=\mathsf{M}(n,k)$,
where $V$ is of dimension $k$.

\item[(ii)] If $m$ is odd, we have
$\mathsf{M}^\Delta(n,V)=\mathsf{M}(n,-2k)$,
where $V$ is of dimension $2k$.
\end{itemize}
\end{theorem}


\vspace{4pt}
\noindent \textbf{Proof of Theorem \ref{thm_matrix_diagonal} for $m$ even.}
Let $(e_\mu)_{1\leq \mu\leq k}$ be an orthonormal basis of 
$(V,\alpha)$. We denote by 
$x^{i\mu}$ the corresponding linear coordinates on the $i$-th copy of $V$ in 
$V^{\otimes 2n}$. For every $1 \leq i, j \leq 2n$, we define
\begin{equation}\label{eq_Delta_ij}
\Delta_{ij} := \sum_{\mu=1}^k \frac{\partial}{\partial x^{i\mu}}
\otimes \frac{\partial}{\partial x^{j\mu}} \,,\end{equation}
\begin{equation} \label{eq_alpha_ij}
\alpha_{ij} := \sum_{\mu=1}^k dx^{i\mu}
\otimes dx^{j\mu} \,.\end{equation}
In particular, for every pair $p=(i,j)$, we have 
$\alpha_p^{-1} =\Delta_{ij}$, and $\alpha_p=\alpha_{ij}$.

\begin{lemma} \label{lem_diag_ortho}  We have
\begin{enumerate}
\item[(i)] for every $1 \leq i,j \leq 2n$, $\alpha_{ij}
(\Delta_{ij})=k$,
\item[(ii)] for every $1 \leq i,j,p,q \leq 2n$, 
$(\alpha_{ip} \otimes \alpha_{qj})(\Delta_{pq})=\alpha_{ij}$.
\end{enumerate}
\end{lemma}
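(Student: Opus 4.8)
The plan is to verify both identities by a direct coordinate computation, relying only on the orthonormality of the basis $(e_\mu)_{1\leq \mu \leq k}$ — which is precisely what makes \eqref{eq_Delta_ij} and \eqref{eq_alpha_ij} correct expressions for the inverse bivector $\alpha_p^{-1}$ and the form $\alpha_p$ — together with the fundamental duality pairing between the $i$-th copy of $V$ and the $i$-th copy of $V^*$, namely $\langle dx^{i\mu}, \frac{\partial}{\partial x^{j\nu}}\rangle = \delta_{ij}\delta_{\mu\nu}$. Here the tensors $\alpha_{ij}$ and $\Delta_{ij}$ are understood to be supported on the $i$-th and $j$-th tensor factors only, and we take all indices $i,j,p,q$ pairwise distinct, as in the intended applications.

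For part (i), I would substitute the definitions and contract slot by slot: pairing $\alpha_{ij} = \sum_\mu dx^{i\mu}\otimes dx^{j\mu}$ against $\Delta_{ij} = \sum_\nu \frac{\partial}{\partial x^{i\nu}}\otimes \frac{\partial}{\partial x^{j\nu}}$, the $i$-slots contribute a factor $\delta_{\mu\nu}$ and the $j$-slots contribute a second factor $\delta_{\mu\nu}$, so that $\alpha_{ij}(\Delta_{ij}) = \sum_{\mu,\nu}\delta_{\mu\nu}\delta_{\mu\nu} = \sum_{\mu=1}^k 1 = k$.

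For part (ii), the computation is the same bookkeeping carried out over three index sets. Writing $\alpha_{ip}\otimes\alpha_{qj} = \sum_{\mu,\nu} dx^{i\mu}\otimes dx^{p\mu}\otimes dx^{q\nu}\otimes dx^{j\nu}$ and $\Delta_{pq} = \sum_\rho \frac{\partial}{\partial x^{p\rho}}\otimes\frac{\partial}{\partial x^{q\rho}}$, contracting the $p$- and $q$-factors against $\Delta_{pq}$ produces $\delta_{\mu\rho}\delta_{\nu\rho}$, and summing over $\rho$ collapses the expression to $\sum_\mu dx^{i\mu}\otimes dx^{j\mu} = \alpha_{ij}$ in the remaining $i$-th and $j$-th slots, which is the claim.

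The only point requiring care — and it is bookkeeping rather than a conceptual obstacle — is keeping track of which tensor slot each factor occupies: in part (ii) the covector $\alpha_{ip}\otimes\alpha_{qj}$ and the bivector $\Delta_{pq}$ are paired only along the $p$-th and $q$-th slots, while the $i$-th and $j$-th slots stay free. No signs or reorderings appear, since $\alpha$ is symmetric; the analogous skew-symmetric identities, where the decreasing-order convention and the crossing signs of \eqref{eq_delta_P_omega} and \eqref{Eq: omegap} must be tracked, are handled separately in the odd case. I expect the actual write-up to be only a few lines.
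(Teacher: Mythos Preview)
Your proof is correct and follows exactly the approach of the paper, which simply records the result as ``an immediate calculation using \eqref{eq_Delta_ij} and \eqref{eq_alpha_ij}.'' You have written out in full the coordinate contraction that the paper leaves implicit, and your bookkeeping (tracking which slots are paired and which remain free) is accurate.
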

\begin{proof}
The claim follows from \eqref{eq_Delta_ij} and \eqref{eq_alpha_ij} by a direct calculation.
\end{proof}

To prove Theorem \ref{thm_matrix_diagonal} for $m$ even, we must show
\begin{equation}\label{msloop}
\alpha_P(\Delta_{P'})=k^{L(P,P')}\, 
\end{equation}
for all $n$-pairings $P$ and $P'$.
Equation \eqref{msloop} follows from
Lemma \ref{lem_diag_ortho} and by viewing graphically the contractions of tensors in terms of the arc diagrams of $P$ and $P'$. \hspace*{\fill} $\blacklozenge$

\vspace{8pt}
\noindent \textbf{Proof of Theorem \ref{thm_matrix_diagonal} for $m$ odd.}
 The argument is parallel to the $m$ even case, but
 some care with signs is required.

Let $(e_\mu,f_\mu)_{1 \leq \mu \leq k}$ a symplectic basis of $(V,\omega)$: 
\begin{eqnarray*}
\forall \mu,\nu, \ \ \ & \omega(e_\mu,e_\nu)=\omega(f_\mu,f_\nu)=0\, , \\
\forall \mu\neq \nu, &
\omega(e_\mu,f_\nu)=0\, ,\\
\forall \mu, \ \ \ \ \ \, &
\omega(e_\mu,f_\mu)=-\omega(f_\mu,e_\mu)=1\, .
\end{eqnarray*}
We denote by $x^{i\mu}$ and $y^{i\mu}$ the corresponding linear coordinates on the $i$-th copy of $V$ in $V^{\otimes 2n}$. 
For every $1 \leq i,j\leq 2n$, we define
\begin{equation} \label{eq_Delta_ij_symp}
\Delta_{ij}=\sum_{\mu=1}^k
\left( \frac{\partial}{\partial x^{i\mu}} \otimes \frac{\partial}{\partial y^{j\mu}} - 
\frac{\partial}{\partial x^{j\mu}} \otimes \frac{\partial}{\partial y^{i\mu}} \right)\,,\end{equation}
\begin{equation} \label{eq_alpha_symp}
    \omega_{ij}=\sum_{\mu=1}^k (dx^{i\mu} \otimes dy^{j\mu} 
    - dx^{j\mu} \otimes dy^{i\mu})\,.
\end{equation}
In particular, for every pair $p=(i,j)$ with $i<j$, we have $\omega_p^{-1}=\Delta_{ji}$ and $\omega_p=\omega_{ij}$.
\begin{lemma} \ \label{lem:cycle_symp} We have
\begin{itemize}
\item[(i)] For every $1 \leq i,j \leq 2n$, $\omega_{ij}(\Delta_{ji})=-2k$.
\item[(ii)]For every $1\leq i,j,p,q \leq 2n$, 
$(\omega_{ip} \otimes \omega_{qj})(\Delta_{pq})
=\omega_{ji}$.
\end{itemize}
\end{lemma}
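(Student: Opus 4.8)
The plan is to prove Lemma \ref{lem:cycle_symp} by direct computation from the explicit formulas \eqref{eq_Delta_ij_symp} and \eqref{eq_alpha_symp}, being careful with signs since we are in the skew-symmetric case. For part (i), I would substitute the definitions and contract: $\omega_{ij}(\Delta_{ji})$ pairs $\sum_\mu (dx^{i\mu}\otimes dy^{j\mu} - dx^{j\mu}\otimes dy^{i\mu})$ against $\sum_\nu (\partial/\partial x^{j\nu}\otimes \partial/\partial y^{i\nu} - \partial/\partial x^{i\nu}\otimes \partial/\partial y^{j\nu})$. The first tensor factor involves copy $i$ in the covector and copy $j$ in the vector, so the $dx^{i\mu}\otimes dy^{j\mu}$ term can only contract with the $-\partial/\partial x^{i\nu}\otimes\partial/\partial y^{j\nu}$ term (matching indices $i$ with $i$, $j$ with $j$), giving $-\sum_\mu 1 = -k$; symmetrically the $-dx^{j\mu}\otimes dy^{i\mu}$ term contracts with $+\partial/\partial x^{j\nu}\otimes\partial/\partial y^{i\nu}$ to give another $-k$. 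The cross terms vanish because they would require pairing a $dx$ with a $\partial/\partial y$ of the same index, which is zero. Hence $\omega_{ij}(\Delta_{ji})=-2k$.

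For part (ii), the computation is a local contraction of three tensors $\omega_{ip}\otimes\omega_{qj}$ against $\Delta_{pq}$, where the indices $p$ and $q$ are the ones being contracted and $i,j$ survive. The key algebraic input is that contracting the symplectic form against the inverse bivector in the middle slot produces the identity on $V$, which on the level of $dx/dy$ coordinates is the statement that $\sum_\mu(dx^{i\mu}\otimes \cdots)$ composed through $\Delta_{pq}$ rebuilds $\omega_{ji}$ (note the reversed index order, which is exactly the source of a sign). I would organize this by first isolating the contribution of copy $p$: $\omega_{ip}$ restricted to its $p$-slot is $\sum_\mu(dx^{i\mu}\,dy^{p\mu}-dx^{p\mu}\,dy^{i\mu})$, and $\Delta_{pq}$ has $p$-slot entries $\partial/\partial x^{p\nu}$ and $\partial/\partial y^{p\nu}$; performing the $p$-contraction replaces the $p$-slots by the appropriate $q$-slot partials with signs, and then the $q$-contraction against $\omega_{qj}$ finishes the job. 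Carrying the bookkeeping through shows the result is $\omega_{ji}$ rather than $\omega_{ij}$, consistent with the convention in \eqref{eq_delta_P_omega} that $\omega_p^{-1}=\Delta_{ji}$ for $p=(i,j)$ with $i<j$.

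I do not expect a genuine obstacle here: both statements are finite-dimensional linear-algebra identities that follow by unwinding the definitions, and the only real subtlety is tracking the signs coming from the skew-symmetry of $\omega$ and from the decreasing-order convention built into $\Delta_P$ and $\omega_P$. The main point to get right is the orientation of the surviving form in (ii) — $\omega_{ji}$, not $\omega_{ij}$ — since this is precisely what makes the crossing-number signs $(-1)^{c(P)}$ in \eqref{Eq: omegap} and \eqref{eq_delta_P_omega} cancel correctly when one assembles the global statement. After Lemma \ref{lem:cycle_symp} is in place, the proof of Theorem \ref{thm_matrix_diagonal} for $m$ odd proceeds exactly as in the even case: one thinks graphically about the contraction $\omega_P(\Delta_{P'})$ in terms of the superimposed arc diagrams of $P$ and $P'$, where each of the $L(P,P')$ loops contributes a factor of $-2k$ by part (i) while the edges along each loop are glued by part (ii), and the accumulated crossing signs from the two $(-1)^{c(P)}$, $(-1)^{c(P')}$ factors combine with the per-edge sign reversals to yield exactly $(-2k)^{L(P,P')}$, i.e.\ the $P,P'$ entry of $M(n,-2k)$.
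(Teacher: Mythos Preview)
Your proposal is correct and takes exactly the same approach as the paper: the paper's proof is the single sentence ``This is an immediate calculation using \eqref{eq_Delta_ij_symp} and \eqref{eq_alpha_symp},'' and your explicit unwinding of the contractions is precisely that calculation, with the right sign bookkeeping for part~(i) and the correct observation that the surviving form in~(ii) is $\omega_{ji}$ rather than $\omega_{ij}$. Your closing paragraph about how the lemma feeds into Theorem~\ref{thm_matrix_diagonal} is accurate context but strictly outside the scope of the lemma itself.
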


\begin{proof} The claim follows from
\eqref{eq_Delta_ij_symp} and \eqref{eq_alpha_symp}
by a direct calculation. \end{proof}

\begin{remark} The sign rule in the last equality is to choose a cyclic order on the set $\{i,p,q,j\}$ and follow it for the 2-forms $\omega$ and the bi-vector $\Delta$:
$\omega_{ip}$, then $\Delta_{pq}$, then $\omega_{qj}$, and we get $\omega_{ji}$. 
\end{remark}

\begin{lemma} \label{Prop:cycle}
Let $i_1, \dots, i_{2h}$ be pairwise different indices among $1, \dots, 2n$. Then, 
\[(\omega_{i_1i_2}\otimes \omega_{i_3i_4}\otimes \dots \otimes \omega_{i_{2h-1}i_{2h}})(\Delta_{i_2i_3}\otimes \Delta_{i_4i_5} \otimes\dots \Delta_{i_{2m}i_1})=(-1)^h (2k)\,.\]
\end{lemma}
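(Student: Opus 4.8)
The plan is to prove the identity by induction on $h$, using only the two contraction rules recorded in Lemma~\ref{lem:cycle_symp}. The first thing to note is that, since $i_1,\dots,i_{2h}$ are pairwise distinct, each factor $\omega_{i_a i_b}$ and each factor $\Delta_{i_a i_b}$ occupies a distinct pair of the $2n$ tensor slots of $V^{\otimes 2n}$; consequently the contraction on the left-hand side is a genuine scalar, independent of the order in which the factors are listed, and any partial contraction along a subset of the slots is well defined. This is what will allow the cyclic pattern $(\omega,\Delta,\omega,\Delta,\dots)$ to be manipulated freely.

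For the base case $h=1$ the left-hand side is $\omega_{i_1 i_2}(\Delta_{i_2 i_1})$, which by Lemma~\ref{lem:cycle_symp}(i) (with $(i,j)=(i_1,i_2)$) equals $-2k=(-1)^1(2k)$.

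For the inductive step with $h\geq 2$, I would single out the factor $\Delta_{i_2 i_3}$, which meets only the two $\omega$-factors $\omega_{i_1 i_2}$ (through the slot $i_2$) and $\omega_{i_3 i_4}$ (through the slot $i_3$), and contract it first. By Lemma~\ref{lem:cycle_symp}(ii), applied with $(i,p,q,j)=(i_1,i_2,i_3,i_4)$, this partial contraction produces $\omega_{i_4 i_1}$, so the whole expression becomes
\[(\omega_{i_4 i_1}\otimes \omega_{i_5 i_6}\otimes \dots \otimes \omega_{i_{2h-1}i_{2h}})(\Delta_{i_4 i_5}\otimes \Delta_{i_6 i_7}\otimes \dots \otimes \Delta_{i_{2h}i_1})\,.\]
Using the skew-symmetry $\omega_{i_4 i_1}=-\omega_{i_1 i_4}$ (immediate from \eqref{eq_alpha_symp}) this equals $-1$ times the left-hand side of the lemma for the $(h-1)$-fold necklace on the pairwise distinct indices $(i_1,i_4,i_5,i_6,\dots,i_{2h})$: indeed the surviving $\omega$-factors are then $\omega_{i_1 i_4},\omega_{i_5 i_6},\dots,\omega_{i_{2h-1}i_{2h}}$ and the $\Delta$-factors $\Delta_{i_4 i_5},\Delta_{i_6 i_7},\dots,\Delta_{i_{2h}i_1}$, which is exactly the cyclic pattern of the statement with $2(h-1)$ indices. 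Invoking the induction hypothesis, that $(h-1)$-fold contraction equals $(-1)^{h-1}(2k)$, hence the original expression equals $-(-1)^{h-1}(2k)=(-1)^h(2k)$, as claimed.

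The step that needs the most care — and essentially the only place a sign is created — is the passage back to standard cyclic form in the inductive step: after one use of Lemma~\ref{lem:cycle_symp}(ii) the surviving factor $\omega_{i_4 i_1}$ is oriented backwards relative to the pattern $(\omega,\Delta,\omega,\dots)$, and it is precisely the single sign $\omega_{i_4 i_1}=-\omega_{i_1 i_4}$ needed to restore that pattern which builds up the alternating factor $(-1)^h$. Everything else is bookkeeping of disjoint tensor slots. Alternatively, one could first record that the left-hand side is invariant under the cyclic shift $(i_1,\dots,i_{2h})\mapsto(i_3,i_4,\dots,i_{2h},i_1,i_2)$, since this merely reorders disjoint tensor factors, and then read off the reduced necklace directly; the induction above already makes the sign transparent, so I would keep it.
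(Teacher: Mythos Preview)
Your proof is correct and is exactly the induction the paper has in mind: the paper's proof consists of the single sentence ``This follows from Lemma~\ref{lem:cycle_symp} by induction,'' and you have spelled out precisely that induction, using part~(i) for the base case and part~(ii) together with the skew-symmetry $\omega_{i_4 i_1}=-\omega_{i_1 i_4}$ for the inductive step.
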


\begin{proof} The result follows from Lemma \ref{lem:cycle_symp} 
by induction. \end{proof}

\begin{lemma} \label{lem_tree}
Consider a simple closed oriented curve crossing a horizontal line at $2n$ points. Then, the number of arcs oriented from left to right 
and the number of arcs oriented from right to left
are both odd.
\end{lemma}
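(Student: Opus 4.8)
The plan is to prove, by induction on $n$, that for a simple closed oriented curve $\gamma$ meeting a horizontal line $\ell$ transversally in $2n$ points the number $N_+$ of arcs traversed from left to right is odd. Since the $2n$ arcs split into $N_+$ left-to-right arcs and $N_-$ right-to-left arcs with $N_++N_-=2n$ even, it then follows at once that $N_-$ is odd as well; note also that reversing the orientation of $\gamma$ exchanges $N_+$ and $N_-$ without changing their common parity, so we are free to orient $\gamma$ conveniently. The base case $n=1$ is immediate: the two arcs both join the two crossing points, and $\gamma$ runs rightward along one of them and leftward along the other, so $N_+=N_-=1$.

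For the inductive step I would label the crossing points $p_1<\dots<p_{2n}$ by their $x$-coordinate. Since $\gamma$ is simple, its arcs lying above $\ell$ form a non-crossing perfect matching of $\{p_1,\dots,p_{2n}\}$, and likewise for the arcs below $\ell$; the two matchings interleave into the single cycle $\gamma$. A non-crossing matching always contains an arc joining two horizontally consecutive points (take one of smallest span), so I may fix an innermost above-arc $A=\{p_i,p_{i+1}\}$. As the arcs alternate above/below around $\gamma$, the arc $A$ is flanked by two below-arcs, so after possibly reversing the orientation $\gamma$ runs $p_a\xrightarrow{B}p_i\xrightarrow{A}p_{i+1}\xrightarrow{B'}p_c$ with $B,B'$ below $\ell$; for $n\ge 2$ one checks that $p_a,p_c\notin\{p_i,p_{i+1}\}$ and $p_a\ne p_c$.

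The heart of the proof is a surgery reducing $n$ to $n-1$: delete $p_i,p_{i+1}$ together with $A,B,B'$ and reconnect $p_a$ to $p_c$ by a new below-arc $B''$ obtained by following $B$ and $B'$ and slipping just beneath the now-free segment $[p_i,p_{i+1}]$ of $\ell$. One must verify that $B''$ can be drawn without meeting the remaining below-arcs: the only ones that could obstruct it are those straddling $[p_i,p_{i+1}]$, and simplicity of $\gamma$ forces any such arc to enclose both $p_a$ and $p_c$, so $B''$ can be nested inside it. The outcome is a simple closed oriented curve $\gamma'$ meeting $\ell$ transversally in the $2(n-1)$ points $\{p_1,\dots,p_{2n}\}\setminus\{p_i,p_{i+1}\}$ whose arcs are those of $\gamma$ with $A,B,B'$ replaced by $B''$; the induction hypothesis gives that $N_+(\gamma')$ is odd.

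It remains to compare the two counts. The difference $N_+(\gamma)-N_+(\gamma')$ equals the number of $A,B,B'$ that are left-to-right, minus $1$ if $B''$ is left-to-right. The arc $A$ runs $p_i\to p_{i+1}$, hence is left-to-right and contributes $1$. Expressing the three remaining alternatives in terms of $x_a$ and $x_c$, and using that each of $p_a,p_c$ lies left of $p_i$ or right of $p_{i+1}$ since $p_i,p_{i+1}$ are consecutive, one gets four position cases: ``$p_a$ right, $p_c$ left'' is impossible, as it would force $B$ and $B'$ to cross; the two ``same side'' cases are constrained by the non-crossing of $B$ and $B'$; and the last case is unconstrained. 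In each of the three admissible cases an odd number of the three statements ``$B$ is left-to-right'', ``$B'$ is left-to-right'', ``$B''$ is not left-to-right'' holds, so $N_+(\gamma)-N_+(\gamma')$ is even and $N_+(\gamma)$ is odd, completing the induction. I expect the surgery to be the main obstacle: one must choose the deleted arc to be an innermost \emph{above}-arc so that the slot under $[p_i,p_{i+1}]$ is available, check that $\gamma'$ is again a simple curve, and carry the parity bookkeeping through the position cases, which is exactly where the hypothesis that $\gamma$ has no self-intersections is used.
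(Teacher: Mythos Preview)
Your inductive surgery approach is correct in substance and genuinely different from the paper's proof. A small bookkeeping slip: with your orientation convention one has
\[
N_+(\gamma)-N_+(\gamma')=[A\ \text{LR}]+[B\ \text{LR}]+[B'\ \text{LR}]-[B''\ \text{LR}]=[B\ \text{LR}]+[B'\ \text{LR}]+[B''\ \text{not LR}],
\]
so the difference \emph{equals} the number of your three statements that hold. In each of the three admissible position cases that number is $2$ (not odd), hence the difference is even and the induction goes through; your stated ``odd'' should read ``even''. The surgery step is fine: since $p_i,p_{i+1}$ are consecutive and $j,k\notin\{i,i+1\}$, one has $i\in(j,k)\Leftrightarrow i+1\in(j,k)$ for any remaining below-arc $\{p_j,p_k\}$; combined with the non-crossing of $\{p_a,p_i\}$ and $\{p_{i+1},p_c\}$ with $\{p_j,p_k\}$ this gives $a\in(j,k)\Leftrightarrow c\in(j,k)$, so $B''=\{p_a,p_c\}$ can indeed be inserted without crossings.

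The paper instead gives a short global argument: orient the curve clockwise and look at the $n+1$ bounded regions the line cuts from the disk it bounds, say $a$ above and $b$ below. These regions form a bipartite tree (adjacent regions joined by an edge); each above-region contributes exactly one left-to-right arc, while a below-region of tree-degree $d$ contributes $d-1$. Summing gives $a+(n-b)$ left-to-right arcs, which has the parity of $a+n+b=2n+1$. This avoids induction and case analysis entirely by exploiting the Jordan curve theorem/tree structure, at the cost of a slightly more topological viewpoint; your argument is more hands-on combinatorics and would also adapt to settings where one prefers to stay with arc diagrams.
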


\begin{proof}
Assume for simplicity that the curve is oriented clockwise.
The line cuts the region surrounded by the curve into $n+1$ parts of which $a$ lie above the line and $b$ below the line. Consider the bipartite tree whose vertices correspond to these parts: two vertices are joined by an edge if the corresponding parts are adjacent, see the example in  Figure \ref{fig:tree}.

\begin{figure}
\resizebox{.4\linewidth}{!}{\input{meander.pspdftex}}
   \caption{}
\label{fig:tree}
\end{figure}

Each region above the line contains exactly one left-to-right arc. A region below the line contains $d-1$ left-to-right arcs, where $d$ is the degree of the corresponding vertex in the tree. After summing  over all vertices, we obtain
$a + n-b$ left-to-right arcs.  Modulo 2, we have
$$a+n-b = a+n+b= n+(n+1) = 2n+1\, ,$$
which is always odd.
\end{proof}

\begin{lemma} \label{Lem:clr}
Consider $L$ closed oriented curves with $\ell$ simple intersections or self-intersections
crossing a horizontal line at $2n$ points. Let $s$ be the number of arcs joining one of these $2n$ points to another and oriented from left to right. Then $L+\ell+s$ is even.
\end{lemma}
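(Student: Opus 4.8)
The plan is to argue by induction on the number $\ell$ of double points, reducing $\ell$ by means of the \emph{oriented (Seifert) smoothing} of a double point, with Lemma~\ref{lem_tree} as the base case. In the base case $\ell=0$ the curves are pairwise disjoint simple closed curves, each of which crosses the line; applying Lemma~\ref{lem_tree} to each of them separately shows that each contributes an odd number of left-to-right arcs, so $s\equiv L\pmod 2$ and therefore $L+\ell+s=L+s\equiv 2L\equiv 0\pmod 2$.

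For the inductive step, assume $\ell\ge 1$, and after a small isotopy assume that no double point lies on the line. Pick a double point $p$ and replace the two oriented strands meeting at $p$ by their oriented smoothing: the unique local reconnection of the four branches joining each incoming branch to an outgoing branch without creating a crossing. The result is again a system of closed oriented curves meeting the line at the same $2n$ points, now with $\ell-1$ double points. By the standard behaviour of the oriented resolution, the number of curves changes by exactly $\pm 1$ — it increases by one if $p$ was a self-intersection, since then a curve splits into two, and decreases by one if $p$ was an intersection of two distinct curves, since then the two curves merge. If in addition $s$ changes by an even number, then $L+\ell+s$ changes by $(\pm 1)+(-1)+(\text{even})$, its parity is preserved, and the inductive hypothesis concludes the argument.

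The crux, and the step I expect to require the most care, is the behaviour of $s$ under the smoothing. This is a purely local matter: the smoothing modifies the curves only inside a small disk around $p$, which is disjoint from the line, so it affects only how the arc-pieces between consecutive line-crossings are glued near $p$, while the set of line-crossings and the direction (up or down) of the curve at each of them are untouched. If the two branches at $p$ lie on a single arc $A$, running between line-points $u$ and $v$, the smoothing replaces $A$ by an arc from $u$ to $v$ carrying the same orientation, together with a closed loop disjoint from the line; the former arc has the same left-to-right status and the loop contributes nothing, so $s$ is unchanged. If the two branches lie on distinct arcs $A$ (from $u$ to $v$) and $B$ (from $w$ to $z$), then $A$ and $B$ necessarily lie on the same side of the line, and the smoothing replaces them by two arcs running, with the induced orientations, between the four points $u,v,w,z$; a short case analysis of the cyclic position of $u,v,w,z$ along the line — exploiting that two arcs on the same side meeting at $p$ have interleaved or nested endpoints, and that a pair of arcs with nested endpoints cross an even number of times — shows that the number of left-to-right arcs among the two new arcs is congruent mod $2$ to the number among $A$ and $B$. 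This is the delicate point; once it is settled, the induction on $\ell$ is complete.
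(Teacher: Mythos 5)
You follow the same route as the paper's proof (resolve the crossings one at a time by the oriented smoothing, then finish with Lemma \ref{lem_tree}), but the step you yourself flag as delicate is a genuine gap, and the case analysis you propose cannot close it, because the claim it is meant to establish is false. When the two branches at $p$ lie on distinct arcs $A$ (from $u$ to $v$) and $B$ (from $w$ to $z$), the smoothing produces arcs $u\to z$ and $w\to v$; the left-to-right count is preserved when the endpoint pairs $\{u,v\}$ and $\{w,z\}$ are interleaved along the line, but it can change by $1$ when they are not, and the hypotheses of Lemma \ref{Lem:clr} allow exactly this situation: two arcs on the same side with non-interleaved endpoints cross an even number of times, so they may cross twice, and then there is a crossing to smooth. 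Concretely, let $A$ run above the line from $1$ to $2$ and let $B$ run above the line from $3$ to $4$, crossing $A$ twice, and close up with disjoint arcs from $2$ to $1$ and from $4$ to $3$ below the line; then $L=2$, $\ell=2$, $s=2$. Smoothing either crossing of $A$ with $B$ yields upper arcs $1\to 4$ and $3\to 2$, so $s=1$, while $L=1$ and $\ell=1$: the parity of $L+\ell+s$ is not preserved, and the induction breaks exactly at the point you identified. (After this smoothing the surviving crossing is a self-crossing of one of the new arcs; the invariance of $s$, which the paper's one-line proof also asserts without justification, genuinely fails once arcs with self-crossings, or arcs meeting each other more than once, are in play. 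A correct argument has to exploit that in the configurations relevant to Theorem \ref{thm_matrix_diagonal} every arc is embedded: $L$ and $s$ depend only on the two matchings of the $2n$ points and the orientations, and for embedded arcs the parity of $\ell$ equals the number of interleaved pairs of arcs, so one can first put the arcs in standard position and argue there, rather than iterate the smoothing blindly, which destroys this structure.)

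A second, smaller gap sits in your same-arc case and in the base case. Smoothing a self-crossing whose two branches lie on a single arc splits off a closed curve disjoint from the line; such a component contributes $1$ to $L$ and an even number (namely $0$) of left-to-right arcs, so Lemma \ref{lem_tree} does not apply to it, and your base-case assertion that the curves are simple closed curves ``each of which crosses the line'' is not available once such a step has occurred. Equivalently, the smoothed configuration need not satisfy the implicit hypothesis of Lemma \ref{Lem:clr} that every curve meets the line (a single embedded circle disjoint from the line has $L+\ell+s=1$), so the inductive hypothesis cannot be invoked for it; any repair of the induction must also keep track of these split-off loops.
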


\begin{proof}
Any intersection or self-intersection can be uniquely resolved so that the orientations of the arcs are preserved. The resolution decreases $\ell$ by 1, changes $L$ by~1, and leaves $s$ invariant. The parity of $L+\ell+s$ does not change. When there are no intersections left, we have $L$ independent closed curves, so the assertion follows from Lemma \ref{lem_tree}.
\end{proof}

To prove Theorem \ref{thm_matrix_diagonal} for $m$ odd, we must show 
$$\omega_P(\Delta_{P'})=(-2k)^{L(P,P')}$$ 
for all $n$-pairings $P$ and $P'$.
We represent $P$ and $P'$ by arc diagrams with arcs respectively above and below a horizontal line with $2n$ dots. The arcs above the line are oriented from left to right as the pairs in $\omega_{P}$, while the arcs below the line are oriented from right to left as the pairs of $\Delta_{P'}$. We obtain a collection of $L(P,P')$ intersecting loops. The total number $\ell$ of intersections equals the total number of crossings $c(P)+c(P')$ in both pairings. By definition of $\omega_{P}$ and $\Delta_{P'}$
in \eqref{Eq: omegap} and \eqref{eq_delta_P_omega}, we have a factor $(-1)^{c(P)}(-1)^{c(P')}=(-1)^\ell$ before the product of $\omega_p$'s and $\omega_p^{-1}$'s.

Now we change the orientation of some of the arcs so as to obtain a union of {\em oriented} loops. We change the signs of $\omega_{ij}$'s and $\Delta_{ij}$'s corresponding to these arcs accordingly.
Let $s$ be the number of arcs in the new collection of oriented curves oriented from left to right.
Then the change of orientation changes the sign by 
$(-1)^{n-s}$, which by Lemma~\ref{Lem:clr} is equal to $(-1)^{n+\ell+L(P,P')}$. Now, by Lemma~\ref{Prop:cycle} every closed loop of length~$2h$ contributes a factor of $(-1)^h (2k)$ to the contraction. The product of signs $(-1)^h$ over all loops gives $(-1)^n$. 
Finally, we obtain
$$\ \ \ \ \ \omega_P(\Delta_{P'})=
(-1)^\ell \cdot (-1)^{n+\ell+L(P,P')} (-1)^n (2k)^{L(P,P')} = (-2k)^{L(P,P')}\,.
\ \ \ \ \ \ \ \ \ \ \ \ \ \ \ \
\hspace*{\fill} \blacklozenge$$

\section{Gromov--Witten theory of complete intersections}
\label{sec:gw_ci2}

In this section we  present the main result of the paper: an algorithm computing Gromov–Witten invariants with arbitrary insertions of all smooth complete intersections in projective  space.
The main idea is to trade primitive insertions against nodes. The precise formulation of this idea is presented in  \S\ref{sec_trading}.
Once Gromov--Witten invariants with primitive insertions are turned into nodal Gromov--Witten invariants without primitive insertions, we  use the general nodal Gromov--Witten theory developed in \S \ref{NGWTh}-\ref{section_splitting} to determine an algorithm in \S \ref{section_algorithm}-\S \ref{algor}. As a main consequence, we prove that the Gromov--Witten classes of complete intersections are tautological in \S \ref{sec_tautological}. 

\begin{example}\label{ex:elliptic}
To illustrate this idea of trading primitive insertions against nodes, consider the example of an elliptic curve{\footnote{$E$ may be viewed here as a cubic in $\PP^2$.}} $E$ with cycles $a, b \in H^1(E)$. Suppose we want to compute Gromov--Witten invariants in genus~$g$,  degree $d$, with simple insertions and two primitive insertions $u_1a+v_1b$ and $u_2a + v_2 b$. Such an invariant is equal to 
\begin{equation}
    \label{exxx}
u_1 u_2 \langle a,a \rangle^E + u_1 v_2 \langle a,b \rangle^E + v_1 u_2 \langle b,a \rangle^E + v_1 v_2 \langle b,b \rangle^E \,,
\end{equation}
where we have suppressed all the simple
insertions (and almost everything else)
in the notation.
A priori, we have four constants 
in \eqref{exxx} to determine. However, using the deformation invariance of Gromov--Witten invariants, we can reduce four constants to just one. Indeed, by a deformation of $E$ we can transform the cycles $$(a,b) \mapsto (a,a+b)\, .$$
It follows that $\langle a,a \rangle^E = 0$ and similarly $\langle b,b \rangle^E = 0$ and $\langle a,b \rangle^E =- \langle b,a \rangle^E$. 
Hence, the Gromov--Witten invariant \eqref{exxx} is equal to 
$$
(u_1 v_2 - u_2 v_1) \langle a,b \rangle^E.
$$

To determine $\langle a,b \rangle^E$, we use nodal Gromov--Witten invariants of genus $g+1$. By the splitting formula, the node can be replaced by two marked points with insertions 
$$1 \otimes p + a \otimes b - b \otimes a + p \otimes 1\, ,$$
where $p \in H^2(E)$ is the class of a point. Thus, the nodal Gromov--Witten invariant is equal to a sum of Gromov--Witten invariants with simple insertions (1 and $p$) and $2 \, \langle a,b \rangle^E$. We have expressed $\langle a,b \rangle^E$ in terms of a nodal Gromov--Witten invariant and Gromov--Witten invariants with simple insertions. 
\end{example}

We will generalize the above discussion of $E$ to an arbitrary number of primitive insertions and to all complete intersections. While the monodromy on
$H^1(E)$ played a central role in \cite{Janda,OP},
the addition input there was a set of 
{\em elliptic vanishing relations}. While
the method of elliptic vanishing does {\em not}
easily extend to higher dimensions, trading 
against nodes does.

\subsection{Trading primitive insertions against nodes}
\label{sec_trading}
Using the algebraic results of \S \ref{sec:gw_ci1},
we trade primitive insertions in Gromov--Witten theory of complete intersections for nodes, as illustrated for the elliptic curve in Example \ref{ex:elliptic}.

\begin{theorem}[\textbf{Theorem \ref{thm_intro_1}}]
\label{thm_nodes}
Let $X$ be a complete intersection in projective space which is not a cubic surface or an even dimensional complete intersection of two quadrics.
Then, the Gromov--Witten invariants of $X$ can be effectively reconstructed from the simple nodal Gromov--Witten invariants of $X$.
\end{theorem}

\begin{proof}

We prove Theorem \ref{thm_nodes} by induction on the number of primitive insertions. 
Let $V \subset H^*(X)\otimes \C$ the primitive cohomology. For $a,b \in \Z_{\geq 0}$, let $\Gamma$ be an $X$-valued stable graph with a single vertex, no edges, and $a+b$ legs
divided in two groups labeled from $1$ to $a$ and from $1$ to $b$. We consider the Gromov--Witten theory of $X$ of type $\Gamma$ with $a$ primitive and $b$ simple insertions. More precisely, we fix integers $k_1,\dots,k_{a}\in \Z_{\geq 0}$, 
$\ell_1,\dots,\ell_b \in \Z_{\geq 0}$,
and simple classes $\beta_{1},\dots,\beta_{b}$, and we define the map 
\begin{align} \Omega \colon V^{\otimes a}& \longrightarrow H_{\star}(\oM_\Gamma(X)) \\
 \alpha_1 \otimes \dots \otimes \alpha_a &\longmapsto \prod_{i=1}^{a} \psi_i^{k_i} \ev_i^{*}(\alpha_i) \prod_{j=1}^b \psi_j^{\ell_j}\ev_j^{*}(\beta_j) \cap [\oM_\Gamma(X)]^\virt\,,
\end{align}
obtained by considering all possible primitive insertions 
$\alpha_1,\dots,\alpha_a$. Our goal is to compute~$\Omega$. 

By deformation invariance of the virtual class in Gromov--Witten theory, the class $\Omega$ is invariant under the action of the monodromy group $G$ on $V$. 
By Propositions \ref{prop_big_monodromy_odd}-\ref{prop_big_monodromy_even},
the monodromy group is 
$G=O(V)$ if 
$\dim X$ is even  and $G=\mathrm{Sp}(V)$ if $\dim X$ is odd 
(since $X$ is not a cubic surface or an even dimensional complete intersection of two quadrics). 
In particular, $\Omega=0$ if $a$ is odd by Theorems~\ref{Thm: fund inv o} and ~\ref{Thm: fund inv sp}. From now on, we assume that $a$ is even and we write $a=2n$ with $n \in \Z_{\geq 0}$. Then, $\Omega$ is an element of 
$((V^{*})^{\otimes 2n})^G \otimes H_{*}(\oM_\Gamma(X))$, which can be viewed as a subspace of 
$\C \shP_n \otimes H_{*}(\oM_\Gamma(X))$ by \eqref{eq_inv_orthogonal}\,-\,\eqref{eq_inv_symplectic}.

For every $n$-pairing $P \in \mathcal{P}_n$, let $\Gamma_P$ be the $X$-valued graph obtained from $\Gamma$ by gluing together the legs $i$ and $j$ for every pair $(i,j)$ of $P$. The graph $\Gamma_P$ contains one vertex, $n$ loops, and $b$ legs, see Figure~\ref{Fig:nloops}.

\begin{figure}
\resizebox{.9\linewidth}{!}{\input{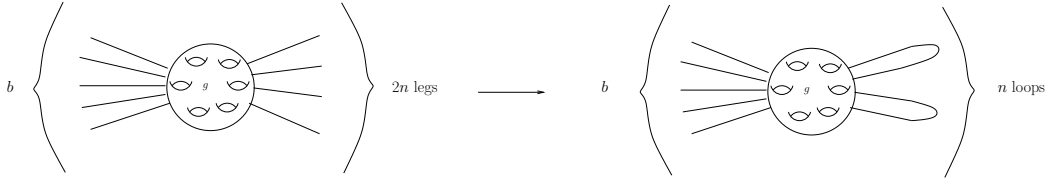}}
 \caption{Trading $2n$ primitive insertions for $n$ nodes}
\label{Fig:nloops}
\end{figure}

Using the moduli stack $\oM_{\Gamma_P}(X)$ of $\Gamma_P$-marked stable maps to $X$, we define the class 
\[\Omega_P := \prod_{i=1}^{2n} \psi_i^{k_i} \prod_{j=1}^b \psi_j^{\ell_j} \ev_j^{*}(\beta_j) 
\cap [\oM_{\Gamma_P}(X)]^{\virt} \in H_{\star}(\oM_{\Gamma_P}(X))\,.\]
Splitting the $n$ loops of $\Gamma_P$ defines a morphism 
\[ s_P \colon \oM_{\Gamma_P}(X) \rightarrow \oM_\Gamma(X) \,.\]
By the splitting formula \eqref{eq_splitting}, we have 
\begin{equation} \label{eq_sP}
s_{P,*} \Omega_P = \pm \Omega(\Delta_P) + \Omega',
\end{equation}
where the insertion of the multivector $\Delta_P$ defined in \S \ref{sec_matrix_diagonal} results from the terms in the Künneth decompositions of the diagonals containing only primitive classes, and $\Omega'$ results from the other terms in the Künneth decompositions of the diagonals and so contains at least one simple class insertion.
The class $s_* \Omega_P$ is defined by simple nodal Gromov--Witten theory of $X$, and the class $\Omega'$ is defined by Gromov--Witten theory of $X$ with 
fewer than $2n$ primitive insertions. It is therefore sufficient to show that 
$\Omega$ can be recovered from the data of the classes 
$\Omega(\Delta_P)$ for $P \in \mathcal{P}_n$.

Let $\mathsf{M}^\Delta(n,V)$ be the endomorphism of $\C \mathcal{P}_n$ defined by the matrix of diagonal insertions of Definition \ref{def_matrix_diagonal}. It follows from Definition \ref{def_matrix_diagonal} that 
\begin{equation} \label{eq_MDelta} 
\mathsf{M}^\Delta(n,V)\,  \Omega = (\Omega(\Delta_P))_{P \in \mathcal{P}_n} 
\in \C \mathcal{P}_n \otimes H_{\star}(\oM_\Gamma(X))\,.
\end{equation}
By Theorem \ref{thm_matrix_diagonal} expressing $\mathsf{M}^\Delta(n,V)$ in terms of the loop matrix, 
and Corollaries \ref{cor_1}-\ref{cor_2} on the spectral properties of the loop matrix, 
$\mathsf{M}^\Delta(n,V)$ preserves the subspace of invariants 
$((V^{*})^{\otimes 2n})^G$ and is invertible in restriction after restriction to this subspace. It follows that $\Omega$ can be recovered from $(\Omega(\Delta_P))_{P \in \mathcal{P}_n}$.

As the relations \eqref{eq_sP} and \eqref{eq_MDelta} are linear, the statement of Theorem \ref{thm_nodes} for Gromov--Witten invariants follows by taking the degrees of the classes $\Omega$
and $\Omega(\Delta_P)$.
\end{proof}


\subsection{Gromov--Witten invariants of complete intersections}
\label{section_algorithm}

We are finally ready to state the main goal of the paper:
\begin{theorem}[\textbf{Theorem \ref{thm_intro_main}}]
\label{thm_main}
Let $X$ be an $m$-dimensional smooth complete intersection 
in $\PP^{m+r}$ of degrees $(d_1,\dots, d_r)$. 
Then, for every decomposition 
$$d_r=d_{r,1}+d_{r,2}\ \ \ \text{with} \ \ \ d_{r,1}, d_{r,2} \in \Z_{\geq 1}\, ,$$
the Gromov--Witten invariants of $X$ can be effectively 
reconstructed from:
\begin{itemize}
\item[(i)] the Gromov--Witten invariants of an $m$-dimensional smooth complete intersection $X_1 \subset \PP^{m+r}$ of degrees $(d_1,\dots, d_{r-1},d_{r,1})$,
\item[(ii)] the Gromov--Witten invariants of an $m$-dimensional smooth complete intersection $X_2 \subset \PP^{m+r}$ of degrees $(d_1,\dots,d_{r-1}, d_{r,2})$,
\item[(iii)] the Gromov--Witten invariants of an 
$(m-1)$-dimensional smooth complete intersection $D \subset \PP^{m+r}$ of degrees $(d_1,\dots,d_{r-1}, d_{r,1}, d_{r,2})$,
\item[(iv)] the Gromov--Witten invariants of an
$(m-2)$-dimensional smooth complete intersection 
$Z \subset \PP^{m+r}$ of degrees $(d_1,\dots, d_{r-1},d_r,d_{r,1}, d_{r,2})$.
\end{itemize}
\end{theorem}

\begin{proof}
To prove Theorem \ref{thm_intro_main}, we consider, following 
\cite[\S 0.5.4]{MP}, 
the degeneration $X \rightsquigarrow 
X_1 \cup_D \widetilde{X}_2$
obtained by factoring the degree $d_r$ defining equation of $X$ 
into factors of degrees $d_{r,1}$ and $d_{r,2}$.
Here, 
$\widetilde{X}_2$ is the blow-up of $X_2$ along $Z$, and 
$X_1 \cup_D \widetilde{X}_2$ denotes $X_1$ and $X_2$ transversally glued along a copy of $D$.

To construct such a degeneration, we start with the equations 
\[ f_1=\dots=f_r=0 \]
of $X$, with $f_i$ of degree $d_i$, and we write a product
$g_r=f_{r,1} f_{r,2}$ of generic polynomials $f_{r,1}$ and
$f_{r,2}$ of degree $d_{r,1}$ and $d_{r,2}$ respectively.
Then, the equations 
\[ f_1=\dots=f_{r-1}=f_{r,1}= tf_r-g_r=0 \]
define a 1-parameter flat family 
$\mathcal{X}\rightarrow \AA^1$.

The fibers $\mathcal{X}_t$ for general $t \neq 0$ are
smooth complete intersections of degrees $(d_1,\dots,d_r)$ and so deformation equivalent to $X$. 
The fiber $0\in \AA^1$ is given by 
$$\mathcal{X}_0=X_1 \cup_D X_2\, ,$$ where $X_1$ is the $m$-dimensional smooth complete intersection
 with equations 
\[ f_1=\dots=f_{r-1}=f_{r,1}=0\, \]
of degrees $(d_1,\dots,d_{r-1},d_{r,1})$,
$X_2$ is the $m$-dimensional smooth complete intersection of 
with equations, 
\[ f_1=\dots=f_{r-1}=f_{r,2}=0\,\]
degrees 
$(d_1,\dots, d_{r_1}, d_{r,2})$,
and $X_1 \cup_D X_2$ is the union of $X_1$ and $X_2$ glued along their common 
intersection $D$, which is the $(m-1)$-dimensional smooth complete intersection 
of degrees $(d_1,\dots,d_{r-1}, d_{r,1}, d_{r,2})$
with equations 
\[ f_1=\dots=f_{r-1}=f_{r,1}=f_{r,2}=0\,.\]

The total space $\mathcal{X}$ of the family over $\AA^1$ is singular with singular locus $Z$, the 
$(m-2)$-dimensional smooth complete intersection of degrees $(d_1,\dots,d_{r-1}, d_r, d_{r,1}, d_{r,2})$
with equations 
\[ f_1=\dots=f_{r-1}=f_r=f_{r,1}=f_{r,2}=0\,.\]
Locally analytically and transversally to $Z$, the singularities of the total space are 
ordinary $3$-fold double points. Blowing-up $X_2$ in $\mathcal{X}$  resolves these singularities: we obtain a new degeneration over $\AA^1$ with smooth total space and special fiber over $0$ given by
$X_1 \cup_D \widetilde{X}_2$, where $\widetilde{X}_2$ is the blow-up of $X_2$ along $Z$.

\vspace{8pt}

\noindent $\bullet$ \textbf{Large monodromy:}
We first prove Theorem \ref{thm_main} when $X$ is not a cubic surface or an even dimensional complete intersection of two quadrics. 
The Gromov--Witten invariants of $X$ are then
determined by the simple nodal Gromov--Witten invariants of $X$ by Theorem \ref{thm_nodes}.
By the nodal degeneration formula of Theorem \ref{thm_degeneration_cycle}
applied to the degeneration of $X\rightsquigarrow X_1 \cup_D \widetilde{X}_2$, the simple nodal Gromov--Witten invariants of $X$ can be computed in terms of the nodal relative Gromov--Witten invariants of $(X_1,D)$ and $(\widetilde{X}_2,D)$. 

More precisely, for the latter claim, we must show that the finite sum on the left side of the degeneration formula \eqref{eq_deg_num} actually reduces to a single possibly non-zero term.
In general, the finite sum arises when the monodromy around the special fiber acts non-trivially on the curve classes of the general fiber. Then, different curve classes of the general fiber can specialize to the same curve class in the special fiber. If $\dim X=m \neq 2$, there is no such monodromy since every curve class $\beta \in H_2^+(X)$ is pulled back from the ambient $\PP^{m+r}$ by the Lefschetz hyperplane theorem. 
If $\dim X=m=2$, and $X$ is a quadric surface, one can check directly that the monodromy action on curve classes is trivial. If $\dim X=m=2$, and $X$ is not a quadric surface, then, as we are also assuming that $X$ is not a cubic surface, or a complete intersection of two quadrics, the Noether-Lefschetz theorem 
\cite{Lef}\cite[XIX, Theorem 1.2]{deligne1973groupes}
implies that the only effective curve classes on a very general deformation of $X$ are pull-back from the ambient $\PP^{m+r}$. By deformation invariance of Gromov--Witten invariants, we conclude that the only possibly non-zero Gromov--Witten invariants of $X$ are for curve classes pull-back from 
$\PP^{m+r}$.

By Theorem \ref{thm_splitting}, the nodal relative Gromov--Witten invariants of 
$(X_1,D)$ and $(\widetilde{X}_2,D)$ can be reconstructed from the Gromov--Witten invariants of $X_1$, $D$, and $\widetilde{X}_2$.
Finally, Gromov--Witten invariants of 
$\widetilde{X}_2$ are determined by the Gromov--Witten invariants of $X_2$ and $Z$
by the blow-up result of
\cite[Theorem B]{fan2017chern}.

\vspace{8pt}
\noindent $\bullet$ \textbf{Small monodromy:}
It remains to prove Theorem \ref{thm_main} when $X$ is a cubic surface or an even dimensional complete intersection of two quadrics. We claim that, in these cases, all cohomology classes of $X$ extend across the special fiber $X_1 \cup_D \widetilde{X}_2$ of the degeneration: the monodromy is trivial
and there are no vanishing cycles.
Assuming the extension claim, the result follows by the degeneration formula, the reconstruction of relative Gromov--Witten invariants of $(X_1,D)$ and $(\widetilde{X}_2,D)$ in terms of Gromov--Witten invariants of $X_1$, $D$, and $\widetilde{X}_2$ \cite[Theorem 2]{MP}, and the reconstruction of the Gromov--Witten invariants of the blow-up $\widetilde{X}_2$ in terms of the invariants of $X_2$ and $Z$ \cite[Theorem B]{fan2017chern}.

To prove the extension claim, we first note that, for $X$ a cubic surface or an even dimensional complete intersection of two quadrics, the monodromy around the special fiber of the degeneration is finite by Proposition \ref{prop_big_monodromy_even}. On the other hand, the degeneration of 
$X$ to $X_1 \cup_D \widetilde{X}_2$ is semi-stable and so has unipotent monodromy by the local monodromy theorem \cite[Theorem 1']{Landman}\cite[Corollary 3.4]{SGA7-1}. Hence, the monodromy, being both finite and unipotent, is trivial. Finally, triviality of the monodromy implies the absence of vanishing cycles by the local cycle invariant theorem \cite[\S 3]{Clemens}.
 
We give below an alternative proof of the absence of vanishing cycles by an explicit study of the topology of the degeneration $X \rightsquigarrow X_1 \cup_D \widetilde{X}_2$. The explicit study can be used to concretely extend cohomology classes across the special fiber for effective computations of Gromov--Witten invariants.

We denote by $S_{X_1}D$ and $S_{X_2}D$ the $S^1$-bundles in the normal line bundles $N_{D|X_1}$ and $N_{D|X_2}$ respectively.
The space $X$ is obtained topologically by gluing the complements of tubular neighborhoods of $D$ in $X_1$ and $\widetilde{X}_2$ by an 
orientation-reversing diffeomorphism of their boundaries $S_{X_1}D$ and $S_{X_2}D$.
Hence, we have a natural commutative diagram of Mayer-Vietoris cohomology sequences 
(as in the proof of \cite[Lemma 4.11]{TZ}):
{\small{
\[\begin{tikzcd}
H^{m-1}(D) 
\arrow[r]
\arrow[d]
& H^m(X_1 \cup_D \widetilde{X}_2) 
\arrow[r]
\arrow[d]
&
H^m(X_1) \oplus H^m(\widetilde{X}_2) 
\arrow[r]
\arrow[d]
&
H^m(D)
\arrow[d]
\\
H^{m-1}(SD) \arrow[r] 
& H^m(X) 
\arrow[r]
&
H^m(X_1 \setminus D) \oplus H^m(\widetilde{X}_2 \setminus D) 
\arrow[r]
&
H^m(SD)
\,,
\end{tikzcd}\]}}

\noindent where we denote\footnote{$S_{X_1}D \simeq S_{X_2}D$ is an isomorphism of $S^1$-fiber bundles but not of oriented (or principal) 
$S^1$-bundles as it reverses the orientation.} by
$SD$ the space $S_{X_1}D \simeq S_{X_2}D$. On the other hand, Mayer-Vietoris for the decompositions $X_1=(X_1 \setminus D)\cup D$ and $\widetilde{X}_2=(\widetilde{X}_2 \setminus D) \cup D$ gives exact sequences
\[\begin{tikzcd}
H^{m-1}(SD) 
\arrow[r]
&
H^{m}(X_1) 
\arrow[r]
& H^m(X_1 \setminus D) \oplus H^m(D) 
\arrow[r]
&
H^m(SD) 
\,,
\end{tikzcd}\]
\[\begin{tikzcd}
H^{m-1}(SD) 
\arrow[r]
&
H^{m}(\widetilde{X}_2) 
\arrow[r]
& H^m(\widetilde{X}_2 \setminus D) \oplus H^m(D) 
\arrow[r]
&
H^m(SD) 
\,.
\end{tikzcd}\]
Hence, to prove that 
$H^m(X_1 \cup_D \widetilde{X}_2) \rightarrow H^m(X)$ is surjective, it suffices to show that $H^m(SD)=0$ 

The Gysin exact sequence for $S^1$-bundles yields
\[\begin{tikzcd}
H^{m-2}(D) 
\arrow[r]
& H^m(D) 
\arrow[r]
&
H^m(SD) 
\arrow[r]
&
H^{m-1}(D) 
\,,
\end{tikzcd}\]
where $H^{m-2}(D) \rightarrow H^m(D)$ is given by the cup-product with the Euler class of $SD$. The key point is that for $X$ a cubic surface or a complete intersection of two quadrics, $D$ is a 
$(m-1)$-dimensional quadric, so for $m$ even, $H^{m-1}(D)=0$ and $H^{m-2}(D)=H^m(D)=\Q$
\cite[XI, 2.6]{deligne1973groupes}. On the other hand, as $D$ is an ample divisor of $X_1$, we also have $c_1(N_{D|X_1}) \neq 0$, so the map $H^{m-2}(SD) \rightarrow H^m(D)$ is an isomorphism. We conclude $H^m(SD)=0$, so the natural map $H^m(X_1 \cup_D \widetilde{X}_2) \rightarrow H^m(X)$ is surjective.
\end{proof}

\subsection{The algorithm} \label{algor}
We now explain how to use Theorem \ref{thm_main} to recursively compute all Gromov--Witten invariants of a smooth complete intersection $X$ in projective space.
The computation is done by induction on the dimension and the degrees. The initial cases of the induction are projective spaces, $X\simeq \PP^m$, whose 
Gromov--Witten invariants can be determined by the localization formula \cite{GP} and the calculation of Hodge integrals on the moduli spaces of curves \cite{FP}.

For the induction step, let $X$ be an $m$-dimensional smooth complete intersection in projective space of degrees $(d_1,\dots,d_r)$ such that $X \neq \PP^m$.
One can assume without loss of generality that $d_1 \geq d_2 \geq \dots \geq d_r \geq 2$.
By Theorem \ref{thm_main} applied to the decomposition $d_r=(d_r-1,1)$, the Gromov--Witten invariants of $X$ can be reconstructed from the Gromov--Witten invariants of $X_1$, $X_2$, $D$, and $Z$, which are complete intersections of dimension $m$, $m$, $m-1$, $m-2$, and degrees $(d_1,\dots,d_{r-1},d_r-1)$,
$(d_1,\dots, d_{r-1},1)$, $(d_1, \dots, d_{r-1}, d_r-1,1)$, 
$(d_1, \dots, d_{r-1},d_r, d_r-1,1)$ respectively.
The complete intersections
$X_1$ and $X_2$ have degrees strictly smaller than $X$.
The complete intersections
$D$ and $Z$ are of dimension strictly smaller than $X$.
Therefore we can apply the induction hypothesis to $X_1$, $X_2$, $D$, and $Z$.




\subsection{Tautological classes}
\label{sec_tautological}

We gave in \S \ref{sec_intro_tautological}  a brief introduction to the general question of whether Gromov--Witten classes \eqref{eq_gw_class} are tautological classes in 
$H^{\star}(\oM_{g,n})$.

\begin{theorem}[\textbf{Theorem \ref{thm_intro_tautological}}]
\label{thm_tautological}
Let $X$ be a smooth complete intersection in projective space.
Then, the Gromov--Witten classes of $X$ are tautological.
\end{theorem}

\begin{proof}
It is enough to show that the algorithm given in \S \ref{section_algorithm} - \S \ref{algor}
for Gromov--Witten invariants can be lifted to the level of Gromov--Witten classes and implies the property of being tautological.
Nodal Gromov--Witten classes and nodal relative Gromov--Witten classes are defined as in \eqref{eq_gw_class} by push-forward to the moduli space of possibly disconnected stable curves. 

We go briefly through the various ingredients used in the algorithm to discuss the lift:
\begin{itemize}
    \item[(i)] The Gromov--Witten classes of projective spaces 
are tautological by the localization formula \cite{GP} and the fact that $\lambda$ classes are tautological \cite{Mumford}.
    \item[(ii)] The proof of Theorem \ref{thm_nodes} is formulated at the level of virtual classes on the moduli spaces of stable maps and so implies a version of Theorem \ref{thm_nodes} for Gromov--Witten classes: under the assumptions of Theorem \ref{thm_nodes}, the Gromov--Witten classes of $X$ can be effectively reconstructed from the simple nodal Gromov--Witten classes of $X$, and are tautological if the simple nodal Gromov--Witten classes of $X$ are tautological.
    \item[(iii)] There is a version of Theorem \ref{thm_splitting} for Gromov--Witten classes: under the assumptions of Theorem \ref{thm_splitting}, the nodal relative Gromov--Witten classes of $(X,D)$ can be effectively reconstructed from the Gromov--Witten classes of $X$ and $D$, and are tautological if the Gromov--Witten classes of $X$ and $D$ are tautological. Indeed, the splitting formula of Theorem \ref{thm_splitting_2} and the rigidification result of Lemma \ref{lem_rigidification} are stated at the level of virtual classes on moduli spaces of stable maps and so can be applied to study Gromov--Witten classes. In addition, we need to know that the reconstruction results of \cite{MP} have a version for Gromov--Witten classes preserving the property of being tautological. This is the case because all the proofs in \cite{MP} are principal terms arguments following from the degeneration and localization formulas and so work at the level of Gromov--Witten classes. An explicit example of such principal terms arguments done at the level of Gromov--Witten classes can be found in \cite{FP1}, where it is shown that relative Gromov--Witten classes of $\PP^1$ are tautological.
    \item[(iv)] There is a version of Theorem \ref{thm_main} for Gromov--Witten classes: under the assumptions of Theorem \ref{thm_main}, the nodal Gromov--Witten classes of $X$ can be effectively reconstructed from the 
    Gromov--Witten classes of $X_1$, $X_2$, $D$, and $Z$, and are tautological if the Gromov--Witten classes of $X_1$, $X_2$, $D$, and $Z$ are tautological. Indeed, the degeneration formula of Theorem \ref{thm_degeneration_cycle} is stated at the level of virtual classes on moduli space of stable maps and so can be applied to study Gromov--Witten classes. In addition, the blow-up result of \cite{fan2017chern} also admits a version for Gromov--Witten classes preserving the property of being tautological because its proof consists of principal terms arguments based on degeneration and localization formulas, as the results of 
    \cite{MP} mentioned in (iii).
\end{itemize}

\noindent The proof using (i)-(iv), in fact, provides an
algorithm to calculate the Gromov-Witten classes of
complete intersections.
\end{proof}

\begin{remark} \label{rem_tautological_curves}
Using Theorem \ref{thm_tautological}, we can give a slightly different proof
of the result proved in \cite{Janda}
that the Gromov--Witten classes of curves are tautological. Indeed, by degeneration, it is enough to prove that the relative Gromov--Witten classes of a genus $1$ curve are tautological. By the results of \cite{MP} expressing relative Gromov--Witten theory in terms of absolute, and lifted to the level of Gromov--Witten classes
as discussed in (iii) of the proof of Theorem \ref{thm_tautological}, it is enough to show that the Gromov--Witten classes of a genus $1$ curve are tautological. But this follows from Theorem \ref{thm_tautological} because a genus $1$ curve is a cubic in 
$\PP^2$.
\end{remark}

\appendix
\section{Nodal cubics in the projective plane}
\label{sec_appendix}

We provide here an explicit example of the nodal degeneration formula and  the relative splitting formula to compute a nodal Gromov--Witten invariant.

Let $X=\PP^2$ be the complex projective plane, and let
$\Gamma$ be the $X$-valued graph consisting of a single vertex $v$ with a loop $e$ and $8$ legs, as illustrated in Figure \ref{Fig: gammae}. Furthermore, let $\g(v)=0$ and $\beta(v)=3 \in H_2^+(\PP^2)=\Z_{\geq 0}$. We first directly
compute the nodal Gromov--Witten invariant 
\begin{equation} \label{eq_gw_ex}
\left\langle \prod_{i=1}^8 \tau_0(p) \right\rangle^{\PP^2}_\Gamma\,,
\end{equation}
where $p \in H^4(\PP^2)$ is the class of a point. Afterwards, we apply the nodal degeneration and the relative splitting formula to find the same result. 

The Künneth decomposition of the class of the diagonal
 $\Delta \subset \PP^2 \times \PP^2$ is $$[\Delta]=1 \otimes p+H \otimes H+p\otimes 1\, ,$$ where $H \in H^2(\PP^2)$
is the class of a line. Hence, by the splitting formula 
\eqref{eq_splitting}, we obtain
\begin{equation*}
    \left\langle \prod_{i=1}^8 \tau_0(p) \right\rangle^{\PP^2}_\Gamma= 
    \left\langle
(\tau_0(1)\tau_0(p) 
+\tau_0(H) \tau_0(H)
+ \tau_0(p)\tau_0(1))\prod_{i=1}^8 \tau_0(p)
\right\rangle^{\PP^2}_{\Gamma\setminus e}\,.
\end{equation*}
As the graph $\Gamma \setminus e$ has no edges, the right side of the above splitting formula consists of $10$-pointed, genus $0$, degree $3$ Gromov--Witten invariants of $\PP^2$. As there are no rational cubic passing through $9$ points in general position in $\PP^2$, the contribution from the terms involving $\tau_0(1)\tau_0(p)$ is zero. After applying the divisor equation to remove the two insertions of $\tau_0(H)$ and using the general fact that there are 12 rational cubics passing through $8$ points in general position in $\PP^2$, we obtain 
\begin{equation*}
\left\langle \prod_{i=1}^8 \tau_0(p) \right\rangle^{\PP^2}_\Gamma
=  3 \times 3 \times 12 = 108  \,.\end{equation*}

\begin{figure}
\resizebox{.7\linewidth}{!}{\input{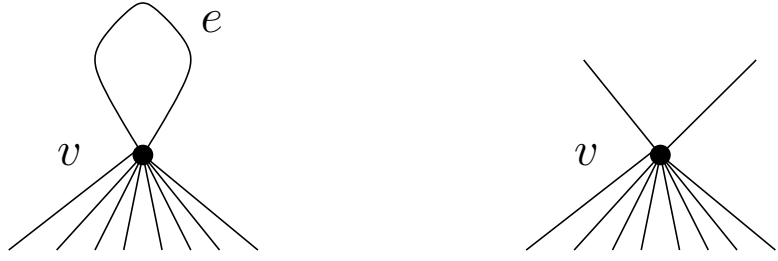}}
\caption{The graph $\Gamma$ on the left and $\Gamma \setminus e$ on the right.}
\label{Fig: gammae}
\end{figure}

\begin{figure}
\resizebox{.7\linewidth}{!}{\input{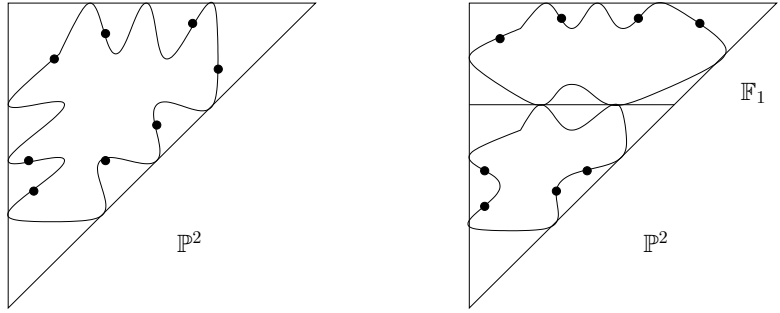}}
\caption{A cubic in $\PP^2$ degenerating to the union of a conic in $\PP^2$ and a curve of class $D_0+3F$ in 
$\mathbb{F}_1$.}
\label{Fig: p2f1}
\end{figure}

We consider next the degeneration of $\PP^2$ to the normal cone of a line $L$, so that the special fiber is given by the union of $\PP^2$ and the Hirzebruch surface $$\mathbb{F}_1:=\PP(\cO_{\PP^1}\oplus \cO_{\PP^1}(1))\, $$ formed by identifying $L\subset \PP^2$ with the section $D_0$ of $\mathbb{F}_1$ with self-intersection $-1$. 
We distribute $4$ of the $8$ point constraints on the general fiber to the $\PP^2$-component, while the other $4$ point constraints are sent to the $\mathbb{F}_1$-component of the special fiber.

In the chosen degeneration, the curves in 
the special fiber which contribute to the nodal degeneration formula can only decompose as a (possibly singular) conic in the $\PP^2$-component and a curve of class $D_0+3F$ in $\mathbb{F}_1$, as illustrated in Figure \ref{Fig: p2f1}. 
Indeed, on the $\PP^2$ component, we have $4$ point constraints, and there is no line through $4$ general points. Moreover, if we had a cubic in $\PP^2$, then 
on the $\FF_1$ component, we would have a curve whose class is $3F$. However, 
there is no curve of class $3F$ passing through $4$ general points in $\FF_1$. 
Hence, the curves in the $\PP^2$-component of the special fiber contributing to the nodal degeneration formula are necessarily conics. Then, the curves in the $\mathbb{F}_1$-component 
must be be of class $D_0+3F$, since $(D_0+3F) \cdot D_0= 2$ and $(D_0+3F) \cdot F= 1$. 
\begin{figure}
\resizebox{.9\linewidth}{!}{\input{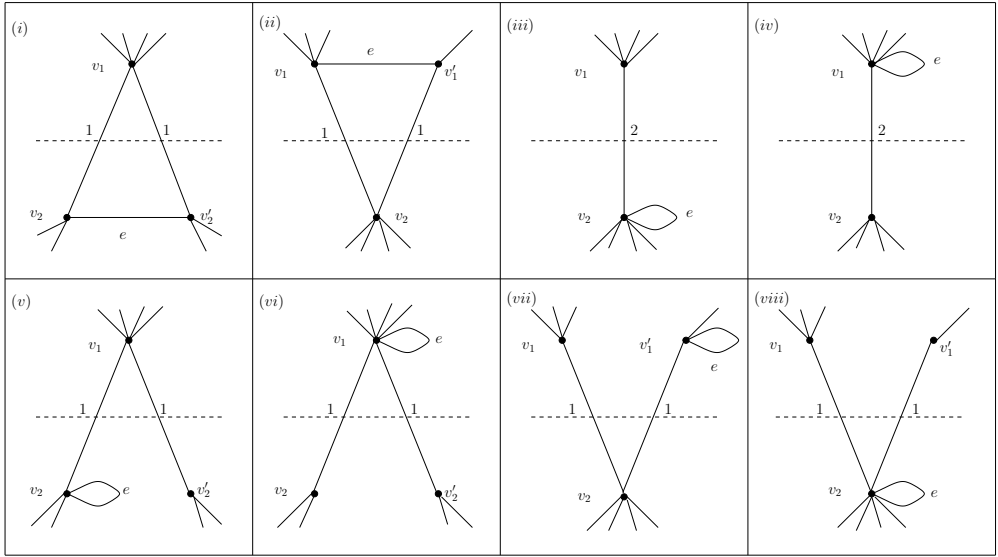}}
\caption{All splittings $\sigma=(\gamma_1,\gamma_2)$ contributing to the nodal degeneration formula. The graphs above the dotted lines are the $(\mathbb{F}_1,D_0)$-valued stable graphs denoted by $\gamma_1$, and below are the $(\PP^2,L)$-valued stable graphs denoted by $\gamma_2$.}
\label{Fig: graphs}
\end{figure}

We display in Figure \ref{Fig: graphs} all possible splittings $\sigma=(\gamma_1,\gamma_2)$ of $\Gamma$ contributing to the nodal degeneration formula. 
The contributions $N_\sigma$ associated to each $\sigma$ is 
computed below using \eqref{eq_deg_num} of Theorem
\ref{thm_degeneration_cycle}:

\begin{itemize}
    \item[(i)] Let $\sigma=(\gamma_1,\gamma_2)$ be the splitting of $\Gamma$ as illustrated in Figure \ref{Fig: graphs}(i) with $\beta(v_1)=D_0+3F$ and $\beta(v_2)=\beta(v_2')=1$. We have 
    \[ N_\sigma =1 \times 2 \times 3 \times 1=6 \,,\]
    where the first factor is $m(\sigma)=1$, the factor $2$ comes from the ordering of the half-edges of $e$, the factor $3$ is the number of (unordered) pairs of lines in the pencil of conics passing through $4$ general points in $\PP^2$, and the last factor is the number of curves of class $D_0+3F$ passing through $6$ general points (the $4$ points in the interior of $\mathbb{F}_1$ and the $2$ boundary points on $D_0$ which 
    are fixed by choosing a pair of lines on the side of $\PP^2$).
      
      \item[(ii)] Let $\sigma=(\gamma_1,\gamma_2)$ be the splitting of $\Gamma$ as illustrated in Figure \ref{Fig: graphs}(ii) with $\beta(v_1)=D_0+2F$,
      $\beta(v_1')=F$, and $\beta(v_2)=2$. We have 
       \[ N_\sigma= 1 \times 2 \times 5 \times 1=10\,,\]
         where the first factor is $m(\sigma)=1$, the factor $2$ comes from the ordering of the half-edges of $e$, the factor $5$ is the number of reducible curves with two components in the pencil of curves of class $D_0+3F$ passing through $5$ given points in $\mathbb{F}_1$, and the last factor $1$ is the number of conics passing through $5$ given points in $\PP^2$. The factor $5$ can be computed as follows:
         the total space of the pencil of curves of class $D_0+3F$ and passing through $5$ points in $\mathbb{F}_1$ is the blow-up $\mathrm{Bl}_5(\FF_1)$ of $\FF_1$ in $5$ points, which has Euler characteristic $9$. On the other hand, a $\PP^1$-fibration over $\PP^1$ with $k$ reducible fibers with two components has Euler characteristic $$(2-k)\times 2+k \times 3=k+4\, .$$ The equation $k+4=9$ implies $k=5$.
         
           \item[(iii)] Let $\sigma=(\gamma_1,\gamma_2)$ be the splitting of $\Gamma$ as illustrated in Figure \ref{Fig: graphs}(iii) with $\beta(v_1)=D_0+3F$ and $\beta(v_2)=2$. We have 
           \[ N_\sigma= 2  \times 4 \times 2\times 1=16 \,,\]
   where the first factor is $m(\sigma)=2$, the factor $4$ comes from the relative splitting formula of Theorem \ref{thm_splitting_3} to compute the nodal invariant of type $\gamma_2$ in $\PP^2$ (the factor $4$ comes from the divisor equation applied to remove the only contributing term $H \otimes H$ of the diagonal of $\PP^2$ in the relative splitting formula{\footnote{The correction term is zero as the contact point with $D_0$ is already fixed, whereas the insertion of the diagonal 
    $\Delta_{D_0}=1 \otimes p + p \otimes 1$ would also impose a constraint on the contact point.}}), the factor $2$ is the number of conics through $4$ points in $\PP^2$ tangent to a given line, and the last factor $1$ is the number of curves of class $D_0+3F$
    passing through $4$ points in $\mathbb{F}_1$ and tangent to $D_0$ at a fixed point. 
  
  \item[(iv)]  Let $\sigma=(\gamma_1,\gamma_2)$ be the splitting of $\Gamma$ as illustrated in Figure \ref{Fig: graphs}(iv) with $\beta(v_1)=D_0+3F$ and $\beta(v_2)=2$. We have   
     \[ N_\sigma = 2 \times 2  \times 5 \times 1 =20 \,,\]
     where the first factor is $m(\sigma)=2$, the second factor $2$ is the number of conics through $4$ points in $\PP^2$ tangent to a given line, the factor $5$ comes from the relative splitting formula of Theorem \ref{thm_splitting_3} to compute the nodal invariant of type $\gamma_1$ in $\mathbb{F}_1$ (the factor $5$ comes from the divisor equation{\footnote{We have $(D_0+3F)\cdot F=1$, 
     $(D_0+3F)\cdot (D_0+F)=3$, and $(D_0+3F) \cdot D_0=2$.}} 
     applied to remove the only contributing terms 
     $F \otimes (D_0+F) + D_0 \otimes F$ of the diagonal of 
     $\mathbb{F}_1$). The last factor $1$ is the number of curves of class $D_0+3F$ passing through $4$ general points and tangent to $D_0$ at a fixed point.
    
     \item[(v)]  Let $\sigma=(\gamma_1,\gamma_2)$ be the splitting of $\Gamma$ as illustrated in Figure \ref{Fig: graphs}(v) with $\beta(v_1)=D_0+3F$  and $\beta(v_2)=\beta(v_2')=1$, and let $\sigma'$ be the same splitting except that the loop is on the vertex $v_2'$. We have
     \[ N_\sigma+N_{\sigma'}= 1 \times 3 \times (1+1)  \times 1  = 6\,,\]
      where the first factor is $m(\sigma)=1$, the factor $3$ is the number of (unordered) pairs of lines in the pencil of conics passing through $4$ points in $\PP^2$, the factor $(1+1)$ comes from the relative splitting formula applied to the loop $e$ adjacent to either $v_2$ or $v_2'$ (the factor $1$ comes from the divisor equation applied to remove the only contributing term $H \otimes H$ of the diagonal of $\PP^2$), and the last factor $1$ is the number of curves of class $D_0+3F$ passing through $6$ given points.
    
         \item[(vi)]  Let $\sigma=(\gamma_1,\gamma_2)$ be the splitting of $\Gamma$ as illustrated in Figure \ref{Fig: graphs}(vi) with $\beta(v_1)=D_0+3F$, and $\beta(v_2)=\beta(v_2')=1$. We have  
           \[ N_\sigma = 1 \times 3  \times 5 \times 1 =15\, ,\] 
        obtained by proceeding analogously as in (v), except that now the loop is on $v_2$. So, the term $F \otimes (D_0+F) +D_0\otimes F$ of the diagonal of $\mathbb{F}_1$ contributes to the divisor equation (as in (iv)).
        
          \item[(vii)]  Let $\sigma=(\gamma_1,\gamma_2)$ be the splitting of $\Gamma$ as illustrated in Figure \ref{Fig: graphs}(vii) with $\beta(v_1)=D_0+2F$,
      $\beta(v_1')=F$, and $\beta(v_2)=2$ as in (ii), and let $\sigma'$ be the same splitting except that the loop is on the vertex $v_1$. We have 
        \[ N_\sigma +N_{\sigma'}= 1 \times 5  \times (3+0) \times 1= 15\,,\]
      where the first factor is $m(\sigma)=1$,  the factor $5$ is the number of reducible curves with two components in the pencil of curves of class $D_0+3F$ passing through $5$ given points in $\mathbb{F}_1$ (as in (ii)), the factor $(3+0)$ comes from the diagonal of $\mathbb{F}_1$ in the divisor equation, with either $3$ or $0$ depending on $e$ adjacent to $v_1$ or $v_1'$, and the last factor $1$ is the number of conics through $5$ points in $\PP^2$.
       
       \item[(viii)]  Let $\sigma=(\gamma_1,\gamma_2)$ be the splitting of $\Gamma$ as illustrated in Figure \ref{Fig: graphs}(vii) with $\beta(v_1)=D_0+2F$,
      $\beta(v_1')=F$ and $\beta(v_2)=2$. We have   
         \[ N_\sigma=1 \times 5  \times 4 \times 1 =20\,,\]
         computed as in (vii), except that the loop is on $v_2$, and so it is the diagonal of $\PP^2$ which contributes to the divisor equation.
    \end{itemize}
    
\noindent After summing the above $8$ contributions, we obtain
\[ 6+10+16+20+6+15+15+20=108\,,\]
as expected.

The above example clearly illustrates why it is essential for the nodal degeneration formula and the relative splitting formula to work with nodal relative invariants defined using the moduli stack $\shP_\gamma(X,D)$, where the imposed nodes are not permitted to lie in the singular locus of the expanded target, and not with the full moduli stack 
$\oM_\gamma(X,D)$. 

For instance, in case (iii), we have on the $\PP^2$-component the moduli stack of genus $1$ degree $2$ relative stable maps to $(\PP^2,L)$ tangent to $L$ and passing through $4$ given general points. A general element of this moduli space consists of one of the two conics $C$ in $\PP^2$ tangent to $L$ and passing through the $4$ given points, to which is attached a genus $1$ component contracted to a point $p \in C$.
This is a legitimate relative stable map as long as $p$ is away from the tangent point of $C$ with $L$, as illustrated on the left of Figure \ref{Fig: 3figs}. When $p$ goes to the tangent point, the target expands, and the genus $1$ curve contracts to
a degree $2$ cover of the $\PP^1$-fiber of the bubble over the tangent point, fully ramified along the two sections of the bubble.
Another possibility is that the genus $1$ curve itself maps as a degree 2 cover of
the $\PP^1$-fiber,
as illustrated in the middle of Figure \ref{Fig: 3figs}. There is a 1-parameter family of such degree 2 covers (parameterized by the relative position on 
$\PP^1$ of the two extra ramification points of the cover). When the ramification point goes to a section of the bubble, and a second bubble appears, creating the curve illustrated on the right of Figure \ref{Fig: 3figs}. In particular, this curve has 2 non-separating nodes mapping to the singular locus of the expanded target.

Hence, $\oM_{\gamma_2}(\PP^2,L)$ is the disjoint union of $\shP_{\gamma_2}(\PP^2,L)$ 
with a set of isolated maps corresponding to geometry of the rightmost configuration in  
Figure \ref{Fig: 3figs}. A similar issue arises in case (iv). 
Including these prohibited maps would produce an error of 16 in the nodal
degeneration formula.


It is interesting to note that if we interpret \eqref{eq_gw_ex} as a genus $1$
Gromov--Witten invariant with insertion of the class $2\delta_0=24 \lambda_1$
and apply the ordinary degeneration formula with insertion of $24 \lambda_1$, 
as done in 
\cite{Bou} more generally with insertion of $(-1)^g \lambda_g$, then these extra curves do contribute $16$. However, with the insertion of $\lambda_1$, the cases (i) and (ii) contributing $16$ in our discussion do not contribute because $\lambda_1$ vanishes on families of curves containing a cycle, and so we obtain the same total answer, as expected.

\begin{figure}
\resizebox{.9\linewidth}{!}{\input{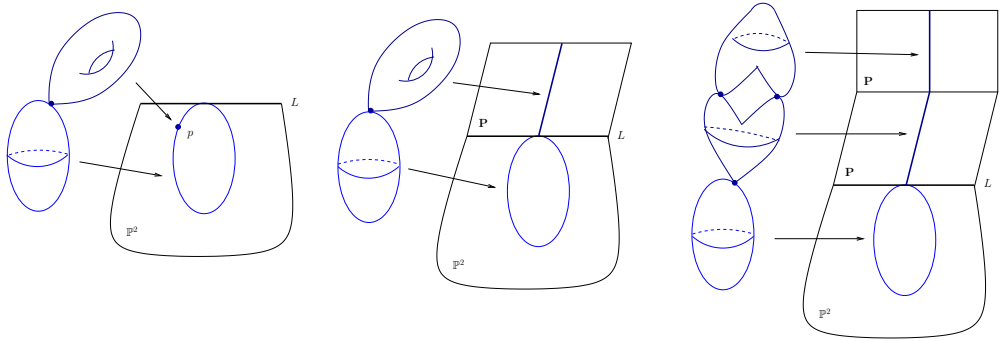}}
\caption{Degree $2$ genus $1$ relative stable maps to $(\PP^2,L)$}
\label{Fig: 3figs}
\end{figure}

\FloatBarrier

\bibliographystyle{plain}
\bibliography{bibliography}

\end{document}